\pdfoutput=1 

\documentclass[12pt]{extarticle}
\usepackage[utf8]{inputenc}
\usepackage[T1]{fontenc}

\usepackage{caption}
\tolerance 10000
\headheight 0in
\headsep 0in
\evensidemargin 0in
\oddsidemargin \evensidemargin
\textwidth 6.5in
\topmargin .25in
\textheight 8.8in
\synctex=1

\usepackage{float}
\usepackage{resizegather}
\usepackage{amssymb}
\usepackage{pmboxdraw}
\usepackage{amsmath}
\usepackage{bbm}
\usepackage{amsthm}
\usepackage{amssymb}
\usepackage{dsfont}
\usepackage[dvipsnames]{xcolor}
\usepackage{graphicx}
\usepackage{alphabeta}
\usepackage{tikz,pgfplots}
\pgfplotsset{compat=1.18}
\usepackage{pgfplots}
\usepackage{comment}
\usepackage[margin=0.5cm]{caption}
\usepackage{hyperref}
\usepackage{subcaption}
\usepackage[all]{xy}
\usepackage[greek,english]{babel}
\usepackage[export]{adjustbox}
\usepackage{hyperref}
\usepackage{mathrsfs}  
\hypersetup{
    colorlinks=true,
    linkcolor=myblue,
    filecolor=magenta,      
    urlcolor=myblue,
    citecolor=BurntOrange,
}
\usepackage{enumitem}
\usepackage{booktabs}

\definecolor{myblue}{rgb}{0.25,0.45,0.99}
\definecolor{myorange}{rgb}{0.8500, 0.3250, 0.0980}
\definecolor{myyellow}{rgb}{0.9290, 0.6940, 0.1250}
\definecolor{mypurple}{rgb}{0.4940, 0.1840, 0.5560}
\definecolor{mygreen}{rgb}{0.4660, 0.6740, 0.1880}

\usepackage[activate={true,nocompatibility},final,tracking=true,kerning=true,spacing=true,factor=1100,stretch=10,shrink=10]{microtype}

\let\OLDthebibliography\thebibliography
\renewcommand\thebibliography[1]{
  \OLDthebibliography{#1}
  \setlength{\parskip}{0.4pt}
  \setlength{\itemsep}{3.0pt plus 0.3ex}
}

\usepackage{listings}

\lstset
  {
  basicstyle = \ttfamily,
  keywordstyle = \color{myblue},
  keywordstyle=[2]\color{myorange},
  morekeywords={abstract,break,case,catch,const,continue,do,else,elseif,end,export,false,for,function,immutable,import,importall,if,in,macro,module,otherwise,quote,return,switch,true,try,type,typealias,using,while},
  morekeywords=[2]{convex_hull,rand,size,normalized_volume,@var,prod,System,@time,monodromy_solve,length,solve,solutions,parameters,norm,maximum},
  morekeywords=[2]{@var,@time},
  commentstyle=\color{ForestGreen},
  morecomment=[l]{##},
  frame=lines,
  showstringspaces = false,
}

\usepackage{algorithm}
\usepackage{algpseudocode}

\newtheorem{theorem}{Theorem}[section]
\newtheorem{theorem*}[theorem]{Theorem*}

\newtheorem{lemma}[theorem]{Lemma}
\newtheorem{corollary}[theorem]{Corollary}
\newtheorem{proposition}[theorem]{Proposition}

\newtheorem{thm/conj}[theorem]{Theorem/Conjecture}

\theoremstyle{definition}

\newenvironment{example}
{\pushQED{\qed}\examplex}
{\popQED\endexamplex}

\newtheorem{remark}[theorem]{Remark}

\theoremstyle{remark}

\newcommand\restr[2]{{
  \left.\kern-\nulldelimiterspace 
  #1
  \vphantom{\big|} 
  \right|_{#2}
  }}

\usepackage{titling} 
\setlength{\droptitle}{-3em}

\title{\bf Toric Amplitudes and Universal Adjoints}
\author{Simon Telen}
\date{}

\begin{document}

\maketitle

\begin{abstract}
\noindent A toric amplitude is a rational function associated to a simplicial polyhedral fan. The definition is inspired by scattering amplitudes in particle physics. We prove algebraic properties of such amplitudes and study the geometry of their zero loci. These hypersurfaces play the role of Warren's adjoint via a dual volume interpretation. We investigate their Fano schemes and singular loci via the nef cone and toric irrelevant ideal of the~fan.  
\end{abstract}

\section{Introduction}

Let $\Sigma$ be a simplicial polyhedral fan in $\mathbb{R}^d$. 
The set of $k$-dimensional cones of $\Sigma$ is denoted by $\Sigma(k)$. We choose a ray generator $u_\rho \in \mathbb{R}^d$ for each ray $\rho \in \Sigma(1)$ and record these vectors in the rows of an $n \times d$ matrix $U$ in arbitrary order. The \emph{toric amplitude} associated to $\Sigma$ and $U$ is the following rational function in $x_\rho$, $\rho \in \Sigma(1)$ with real, positive coefficients: 
\begin{equation} \label{eq:Amp}
{\rm Amp}_\Sigma(x) \, = \, \sum_{\sigma \in \Sigma(d)} \frac{| \det U_\sigma |}{\prod_{\rho \in \sigma(1)} x_\rho} \, \, \, \, \in \, \, \mathbb{R}(x_\rho \, : \, \rho \in \Sigma(1)).
\end{equation}
Here, $U_\sigma$ is the submatrix of $U$ whose rows are indexed by the rays of $\sigma$, and $|\cdot |$ denotes the absolute value. The product in the denominator ranges over all rays of $\sigma$. 
The dependence on $U$ is discussed at the beginning of Section \ref{sec:firstprop} and left implicit in the notation.
The~\emph{universal adjoint} of $\Sigma$ is a polynomial of degree $n-d$ obtained by clearing the denominator in ${\rm Amp}_\Sigma$: 
\begin{equation} \label{eq:Adj}
{\rm Adj}_\Sigma(x)  \, = \, \Big( \prod_{\rho \in \Sigma(1)} x_\rho \Big ) \cdot {\rm Amp}_\Sigma(x) \, = \, \sum_{\sigma \in \Sigma(d)} |\det U_\sigma| \cdot \prod_{\rho \notin \sigma(1)} x_\rho . 
\end{equation}
This paper studies the geometry of the hypersurface defined by ${\rm Adj}_\Sigma$ in $\mathbb{P}^{n-1} = \mathbb{CP}^{n-1}$. We denote this hypersurface by ${\cal A}_\Sigma$. If $\Sigma = \Sigma_P$ is the normal fan of a convex polyhedron $P \subset \mathbb{R}^d$, then our sum is over the vertices of $P$. In that case, we will also write ${\rm Amp}_P$, ${\rm Adj}_P$ and ${\cal A}_P$.

\begin{example} \label{ex:pentagonintro}
Figure \ref{fig:fanpentagon} shows the complete fan $\Sigma$ in $\mathbb{R}^2$ corresponding to $U = \left ( \begin{smallmatrix}
     1 & 0 & -1 & -1 & 0 \\ 0 & 1 & 1 & 0 & -1
\end{smallmatrix} \right)^t$. 
\begin{figure}
    \centering
    \includegraphics[height = 3.5cm]{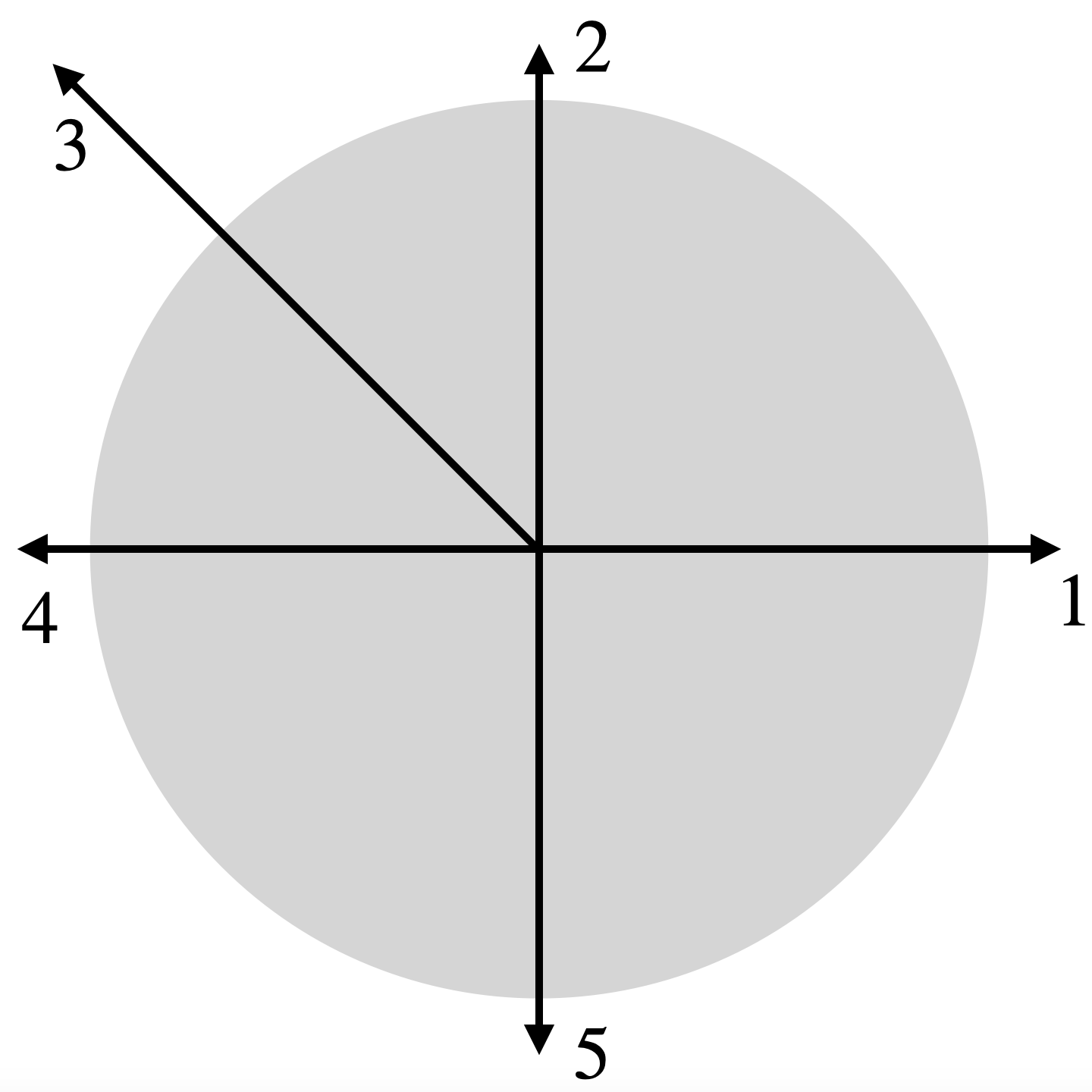} \quad \quad  \quad \quad \quad  
    \includegraphics[height = 3.5cm]{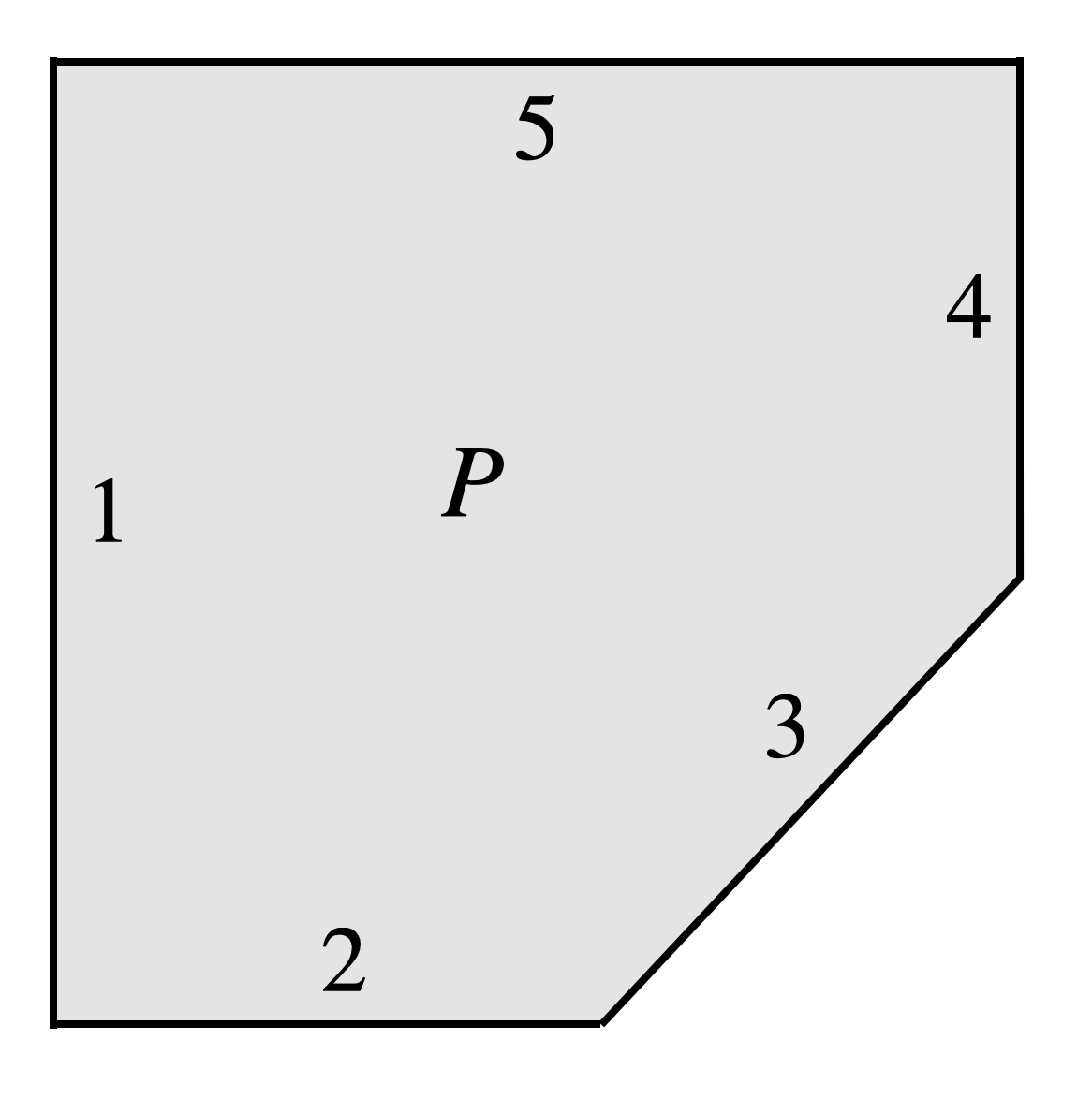}
    \caption{The normal fan of a pentagon.}
    \label{fig:fanpentagon}
\end{figure}
It is the normal fan of the pentagon $P$ shown in the right part of the figure. The amplitude~is
    \begin{equation}  \label{eq:amppentagon}
    {\rm Amp}_P \, = \, \frac{1}{x_1x_2} \, + \, \frac{1}{x_2x_3} \, + \, \frac{1}{x_3x_4} \, + \, \frac{1}{x_4x_5}  \, + \, \frac{1}{x_5x_1}.
    \end{equation}
    The universal adjoint is obtained by multiplying with the product of all $x_i$: 
    \begin{equation} \label{eq:adjpentagon} {\rm Adj}_P \, = \, x_3x_4x_5 \, + \, x_1x_4x_5 \, + \,x_1x_2x_5 \, + \,x_1x_2x_3 \, + \,x_2x_3x_4. \end{equation}
    Its zero locus in $\mathbb{P}^4$ is a cubic threefold with ten isolated singularities. It was pointed out to us by Bernd Sturmfels that this is the \emph{Segre cubic} \cite{Dolgachev}. Such a threefold contains 15 planes, each of which contains four of its nodes. We investigate this configuration in Section \ref{subsec:pentagon}.
\end{example}

In the next paragraphs, we justify the names \emph{toric amplitude} and \emph{universal adjoint}. First, \emph{amplitude} refers to scattering amplitudes in theoretical particle physics. These are important for predictions in particle collider experiments. More specifically, the toric amplitude ${\rm Amp}_P(x)$ corresponding to the \emph{ABHY associahedron} $P$ from \cite{arkani2018scattering} is the bi-adjoint scalar $\phi^3$-amplitude at tree level. In fact, the pentagon $P$ in Figure \ref{fig:fanpentagon} represents such an ABHY associahedron. Substituting $x_1 = X_{1,3}, \, x_2 = X_{1,4}, \, x_3 = X_{2,4}, \, x_4 = X_{2,5}, \, x_5 = X_{3,5}$ in \eqref{eq:amppentagon}, we find the rational function shown in \cite[Equation (3.24)]{arkani2018scattering}. The importance of studying zeros of amplitudes was highlighted in \cite{arkani2024hidden}, where the focus is on ABHY associahedra in any dimension. We come back to this in Section \ref{subsec:assoc}, but we assume no background in physics for the rest of this~article. 

Let $P = \{ y \in \mathbb{R}^d \, : \, u_\rho \cdot y + z_\rho \geq 0 \text{ for } \rho \in \Sigma(1)\}$ be a minimal facet description of $P$. The name \emph{adjoint} refers to the fact that ${\rm Adj}_P(x)$ specializes to Warren's adjoint \cite{kohn2020projective,warren1996barycentric} of the polytope $P$ when setting $x_\rho = u_\rho \cdot y + z_\rho$. This holds for all $z \in \mathbb{R}^n$ contained in an $n$-dimensional cone, see Lemma \ref{lem:universaladjoint}, which justifies the name \emph{universal} adjoint. The study of adjoint hypersurfaces plays a key role in positive geometry, see \cite{ranestad2025positive} or \cite[Section~4.4]{lam2024invitation}.

In \cite{lam2024matroids}, Lam constructs rational amplitude functions from the combinatorics of matroids. In combinatorial algebraic geometry, the geometric objects associated to realizable matroids are hyperplane arrangement complements and their compactifications. On the other hand, to polytopes and fans one associates a toric variety. We propose the name \emph{toric} amplitude to emphasize this analogy. To a certain extent, the combinatorial structure of ${\rm Amp}_\Sigma$ is similar to that of normal toric varieties. For instance, the variables $x_\rho$ indexing the rays of $\Sigma$ are reminiscent of the Cox coordinates on the abstract toric variety $X_\Sigma$ obtained from $\Sigma$ \cite{Cox1995}. Moreover, the monomials of ${\rm Adj}_P$ are the minimal generators of the irrelevant ideal $B(\Sigma_P)$ in the Cox ring of~$X_{\Sigma_P}$. That ideal plays a crucial role in our study of the Fano schemes and singular locus of ${\cal A}_\Sigma$. We will also identify linear spaces contained in ${\cal A}_\Sigma$ from wall inequalities for the deformation cone of $\Sigma$ (Section \ref{sec:chambercomplex}). This is the nef cone of $X_\Sigma$ in toric geometry. We point out that, unlike the toric literature, this paper does not require $\Sigma$ to be rational. 

As indicated above, our goal is to study ${\cal A}_\Sigma$ from the point of view of combinatorial algebraic geometry. In particular, we are interested in describing its Fano schemes, i.e., the linear spaces contained in ${\cal A}_\Sigma$, and in its singular locus in terms of the combinatorics of $\Sigma$. 

\vspace{-0.3cm}
\paragraph{Related work.} In Santal\'o geometry, the toric amplitude ${\rm Amp}_P$ is the universal barrier function for a linear program in standard form \cite[Corollary 2.6]{pavlov2025santalo}. The polytope $P \subset \mathbb{R}^d$ is identified with an affine section of the nonnegative orthant in $\mathbb{R}^n$.
Minimizing ${\rm Amp}_P$ on $P$ amounts to finding the interior point $y$ of $P$ which leads a polar dual polytope $(P-y)^\circ$ of minimal Euclidean volume. This is called the Santal\'o point of $P$. 
The connection with the dual volume function is also explained in \cite[Section 2]{gao2024dual}, where the function ${\rm Amp}_\Sigma$ appears in \cite[Definition 2.1]{gao2024dual} for general fans. In \cite{gao2024dual}, ${\rm Amp}_\Sigma$ is used to define a dual mixed volume function for a tuple $\mathbf{P}$ of polyhedra. It is shown in \cite[Proposition 14.10]{gao2024dual} that this dual mixed volume function evaluates to the tree-level $\phi^3$-amplitude when $\mathbf{P}$ consists of the Minkowski summands in the Loday realization of the associahedron. Such amplitudes exhibit a ``splitting behavior'', which essentially means that certain coordinate restrictions of the amplitude factor into simpler pieces. This was first studied in \cite{cachazo2022smoothly} for the CEGM amplitudes introduced in \cite{cachazo2019scattering}, and recently explored further in \cite{umbert2025splitting}. In these works, the amplitude is expressed as a function of Mandelstam variables $s_{ij}$. Our approach is more directly inspired by \cite{arkani2024hidden}, which expresses the amplitude in terms of the variables $X_{i,j}$, each associated to a facet of the ABHY associahedron. Detecting splitting behavior essentially means finding linear spaces contained in the zero locus of the amplitude. This motivates our study of the Fano schemes~of ${\cal A}_\Sigma$. 

\vspace{-0.3cm}
\paragraph{Outline and contributions.}

We start with motivating examples in Section \ref{sec:examples}. We study the hypersurface ${\cal A}_P$ in detail for the quadrilateral, the pentagon and the three-dimensional ABHY associahedron. For interested readers, we include a discussion of how the toric amplitude arises in physics at the end of Section \ref{sec:examples}. In Section \ref{sec:firstprop}, we prove some useful properties of toric amplitudes and universal adjoints. We show that they behave nicely under taking products (Lemma \ref{lem:factorfans}) and coordinate restrictions (Lemmas \ref{lem:restrict} and \ref{lem:restrictpolytope}). We show that, if $\Sigma$ is complete, then ${\cal A}_\Sigma \subset \mathbb{P}^{n-1}$ contains the projectivized column span of the matrix $U$ (Theorem \ref{thm:containsU}). We spell out the connection to dual volume functions and Warren's adjoint in Section \ref{sec:dualvolume}. Theorem \ref{thm:containsU} implies the well-known fact that Warren's adjoint of $P$ has degree at most $n-d-1$ (Proposition \ref{prop:degadj}) and gives a new geometric interpretation of this hypersurface as a linear section of the universal adjoint (Example \ref{ex:geometric} and Figure \ref{fig:adjquadrilateral}). Section \ref{sec:fano} is about Fano schemes. We observe that ${\cal A}_\Sigma$ contains the zero locus $Z(\Sigma)$ of the toric irrelevant ideal $B(\Sigma)$ (Proposition \ref{prop:containedinB}). We characterize ${\cal A}_P$ as the unique hypersurface of degree $n-d$ containing $Z(\Sigma)$ as well as one $(n-d-1)$-dimensional linear space for each edge of $P$ (Theorem \ref{thm:interpol}). For each face $\Delta$ of $P$ which is a product of simplices, we identify a coordinate subspace $\Lambda_\Delta$ so that the restriction $({\rm Adj}_P)_{|\Lambda_\Delta}$ is a product of linear forms (Corollary \ref{cor:simplexfaces}). This is our interpretation of ``splitting'' \cite{arkani2024hidden,cachazo2022smoothly}. Section \ref{sec:chambercomplex} relates some of the linear spaces contained in ${\cal A}_\Sigma$ to the chamber complex and deformation cone of $\Sigma$ and $U$; see Proposition \ref{prop:linspacefromcham}. This explains vanishing properties of Warren's adjoint of deformations of $P$ (Proposition \ref{prop:degenadjoint}). Section \ref{sec:singlocus} studies the singular locus of ${\cal A}_\Sigma$. We provide an efficient description of ${\rm Sing}({\cal A}_\Sigma) \cap Z(\Sigma)$ in Proposition \ref{prop:singirrel}. We prove a criterion to check whether ${\rm Sing}({\cal A}_\Sigma) \subseteq Z(\Sigma)$ (Corollary \ref{cor:checkIsatB}). We show that for a generic $n$-gon $P$, ${\cal A}_P$ is irreducible and the dimension of its singular locus is at most $n-4$ (Theorem \ref{thm:singngon}). This implies, via a Bertini argument (Theorem \ref{thm:bertini}),  that Warren's adjoint curve for a generic $n$-gon is smooth (Corollary \ref{cor:genericsmoothngon}). Code supporting this paper is found at \cite{mathrepo}. It relies on \texttt{Oscar.jl} for polyhedral and algebraic computations~\cite{OSCAR}.

\vspace{-0.3cm}
\paragraph{Notation.} 

Throughout the text, $\Sigma$ is a simplicial fan in $\mathbb{R}^d$. We write $\Sigma(k)$ for the set of $k$-dimensional cones of $\Sigma$, and $\sigma(1)$ for the rays of a cone $\sigma$. The polyhedron $P \subset \mathbb{R}^d$ is full-dimensional and simple. The linear span of $S \subseteq \mathbb{R}^d$, i.e., the smallest linear subspace of $\mathbb{R}^d$ containing $S$, is ${\rm span}_{\mathbb{R}}(S)$. We call the matrix $U \in \mathbb{R}^{n \times d}$ whose rows are generators of the rays of $\Sigma$ a \emph{ray (generator) matrix}. We assume that ${\rm rank}(U) = d$. We write $\mathbb{P}^{n-1}$ for the $(n-1)$-dimensional complex projective space with coordinates indexed by $\Sigma(1)$. Its homogeneous coordinate ring is $R_\Sigma = \mathbb{C}[x_\rho:  \rho \in \Sigma(1)]$. For an ideal $I \subseteq R_\Sigma$ with homogeneous generators $f_1, \ldots, f_k \in R_\Sigma$ we write $V(I) = V(f_1, \ldots, f_k) = \{ x \in \mathbb{P}^{n-1} \, : \, f_1(x) = \cdots = f_k(x) = 0 \}$.

\section{Polygons and associahedra} \label{sec:examples}
We start with some illustrative examples in which $\Sigma$ is the normal fan of a polygon in $\mathbb{R}^2$ or of an associahedron. These examples motivated this project. They highlight some properties of universal adjoints and set the stage for the general results proved in later sections. Our focus is on linear spaces contained in the adjoint hypersurface ${\cal A}_P$, and on its singular locus.

\subsection{The universal adjoint quadric of a quadrilateral} \label{subsec:quadrilateral}
The normal fan of a quadrilateral has four $2$-dimensional cones and four rays. We have 
\[ {\rm Amp}_P \, = \, \frac{u_{12}}{x_1x_2} + \frac{u_{23}}{x_2x_3} + \frac{u_{34}}{x_3x_4} + \frac{u_{14}}{x_1x_4} \quad \text{and} \quad {\rm Adj}_P \, = \, u_{12} \, x_3x_4 + u_{23} \, x_1x_4 + u_{34} \, x_1x_2 + u_{14} \, x_2x_3, \]
where $u_{ij} = |\det U_{ij}|$ and the rays are ordered in such a way that the cones of $\Sigma(2)$ are generated by $\{\rho_1,\rho_2\}$, $\{ \rho_2,\rho_3\}$, $\{\rho_3,\rho_4\}$ and $\{ \rho_1,\rho_4\}$. The adjoint hypersurface ${\cal A}_P \subset \mathbb{P}^3$ is a quadratic surface in $\mathbb{P}^3$. It is smooth unless the discriminant $(u_{12}u_{34}-u_{14}u_{23})^2$ vanishes. 

If ${\cal A}_P$ is singular, then it is a union of two planes, i.e., ${\rm Adj}_P$ factors. This happens, for instance, for the normal fan of the unit square $[0,1]^2$, see Figure \ref{fig:fansSec2} (left). The singular locus is the intersection of those two planes, which is the line $\mathbb{P}({\rm im}(U)) \subset \mathbb{P}^3$ spanned by the columns of the ray matrix $U \in \mathbb{R}^{4 \times 2}$. This line is always contained in ${\cal A}_P$ by Theorem \ref{thm:containsU}.

A smooth quadratic surface in $\mathbb{P}^3$ is classically ruled by two families of lines. In our setting, these families are described as follows. Let $p_{ij}$ be Pl\"ucker coordinates on ${\rm Gr}(2,4) \subset \mathbb{P}^5$. Set 
\[ {\cal F}_1 \, = \, \Big \{ p \in {\rm Gr}(2,4) \, : \, {\rm rank} \begin{pmatrix}
    p_{12} & p_{23} & p_{34} & p_{14} \\ u_{12} & u_{23} & u_{34} & -u_{14}
\end{pmatrix} \leq 1  \Big \} . \]
For a line $\Lambda \subset \mathbb{P}^3$, let $[\Lambda]$ be its point in ${\rm Gr}(2,4)$. The second family of lines is 
\[ {\cal F}_2 \, = \, \{ [\Lambda] \in {\rm Gr}(2,4) \, : \, \Lambda \text{ intersects the lines } \{x_1=x_3 =0 \}, \{x_2=x_4=0\} \text{ and } \mathbb{P}({\rm im}(U)) \}. \]
The union of (reduced) curves ${\cal F}_1 \cup {\cal F}_2 \subset {\rm Gr}(2,4)$ is the Fano scheme of lines contained in ${\cal A}_P$. The lines $\{x_1=x_3 =0 \}, \,  \{x_2=x_4=0\}$ and $\mathbb{P}({\rm im}(U))$ used in the definition of ${\cal F}_2$ appear in Theorem~\ref{thm:containsU} and Proposition \ref{prop:containedinB}. Notice that these three lines belong to ${\cal F}_1$.

\subsection{The Segre cubic of a pentagon} \label{subsec:pentagon}

The universal adjoint of a pentagon $P$ with normal fan $\Sigma$ is the following quinary cubic: 
\[ {\rm Adj}_P \, \,  = \, \,  u_{45} \, x_1x_2x_3 + u_{15} \, x_2x_3x_4 + u_{12} \, x_3x_4x_5 + u_{23} \, x_1x_4x_5 + u_{34} \, x_1x_2x_5, \quad u_{ij} > 0. \]
The singular locus ${\rm Sing}({\cal A}_P)$ of the threefold ${\cal A}_P = V({\rm Adj}_P) \subset \mathbb{P}^4$ is defined by 
\begin{align*}
&u_{45} \, x_2x_3 + u_{23} \, x_4x_5  + u_{34} \, x_2x_5 \, = \, 0, \quad &  
u_{45} \, x_1x_3 + u_{15} \, x_3x_4 + u_{34} \, x_1x_5 \, = \, 0, \\ 
&u_{45} \, x_1x_2 + u_{15} \, x_2x_4 + u_{12} \, x_4x_5 \, = \, 0, \quad & 
u_{15} \, x_2x_3 + u_{12} \, x_3x_5 + u_{23} \, x_1x_5  \, = \, 0, \\ 
&u_{12} \, x_3x_4 + u_{23} \, x_1x_4 + u_{34} \, x_1x_2  \, = \, 0. 
\end{align*}
This trivially contains the torus invariant points $(1:0:0:0:0), \ldots, (0:0:0:0:1)$ of $\mathbb{P}^4$. We denote these by $e_1, \ldots, e_5$. Five more points contained in ${\rm Sing}({\cal A}_P)$ are identified as follows. Substituting $x_1 = x_3=0$ in our equations, all but the first and the third are trivially satisfied. Additionally setting $u_{23}\, x_4 + u_{34} \, x_2 = u_{15} \, x_2 +u_{12} \, x_5 = 0$ gives $q_{13} = (0:-u_{12}u_{23}:0:u_{12}u_{34}:u_{15}u_{23})$. The points $q_{14},q_{24}, q_{25},q_{35}$ are found in the same way. The Hessian matrix of ${\rm Adj}_P$ has rank $4$ at each of the ten points $e_i, q_{ij}$, so these singular points are isolated.

We recall a classical result from algebraic geometry about irreducible cubic hypersurfaces with isolated singularities in $\mathbb{P}^4$. Such a threefold has at most ten nodes, there exists a threefold with ten nodes, and this is unique up to projective transformations. That threefold is known as the \emph{Segre cubic}. For details we refer to Dolgachev’s historical exposition \cite{Dolgachev}. We conclude from these facts that \emph{the universal adjoint threefold of a pentagon is the Segre cubic}.

Among the ten nodes $e_i$, $q_{ij}$, we find 15 quadruples that are coplanar. These quadruples span 15 planes which are contained in the Segre cubic. They come in three groups of five:
\[
\renewcommand\arraystretch{1.2}
\begin{matrix}
    \Lambda_{i,i+2}  = \{ x_i = x_{i+2} = 0 \}, \quad \quad \quad \quad H_i = \{x_i = u_{i-1,i}\, x_{i+1} + u_{i,i+1} \, x_{i-1} = 0\},\\ 
    L_i = \{u_{i-1,i} x_{i+1} + u_{i,i+1}  x_{i-1} = u_{i-1,i}u_{i+2,i-2}  x_{i+1} -u_{i,i+1}u_{i-2,i-1}  x_{i+2} + u_{i-1,i}u_{i+1,i+2}  x_{i-2} = 0\}.
\end{matrix}
\]
Here $i$ ranges over $\{1, 2, 3, 4, 5\}$. The indexing is cyclic and we use the convention $u_{ij} = u_{ji}$. For instance, if $i = 1$, then $u_{i-2,i-1} = u_{45}$ and $u_{i-1,i}= u_{51} = u_{15}$. In Theorem \ref{thm:interpol} we will characterize ${\cal A}_P$ as the unique cubic threefold containing the ten planes $\Lambda_{i,i+2}$ and $H_i$. 

The configuration of 15 planes contained in ${\cal A}_P$ and the 10 nodal singularities in ${\rm Sing}({\cal A}_P)$ is an abstract configuration $(15_4,10_6)$, meaning that each of the planes contains four nodes and each node is contained in six planes \cite[Proposition 2.2]{Dolgachev}. This is easily checked using the defining equations of our planes and points. For instance, $q_{i,i+2}$ is defined by 
\[ x_i \, = \, x_{i+2} \, = \, u_{i+1, i+2} \, x_{i-2} + u_{i+2,i-2} \, x_{i+1} \, = \, u_{i-1, i} \, x_{i+1} + u_{i,i+1} \, x_{i-1} \, = \, 0,\]
and clearly contained in, for instance, $L_i$. 
All incidences are summarized in Figure \ref{fig:config}. The right part of the figure shows the image of $e_i, q_{ij}$ and $H_i$ under the projection away from the line $\mathbb{P}({\rm im}(U)) \subset \mathbb{P}^4$. We will see in Section \ref{sec:chambercomplex} that this interacts nicely with the \emph{nef cone} of~$\Sigma$. 

\begin{figure}
\centering
\includegraphics[height = 4.5cm]{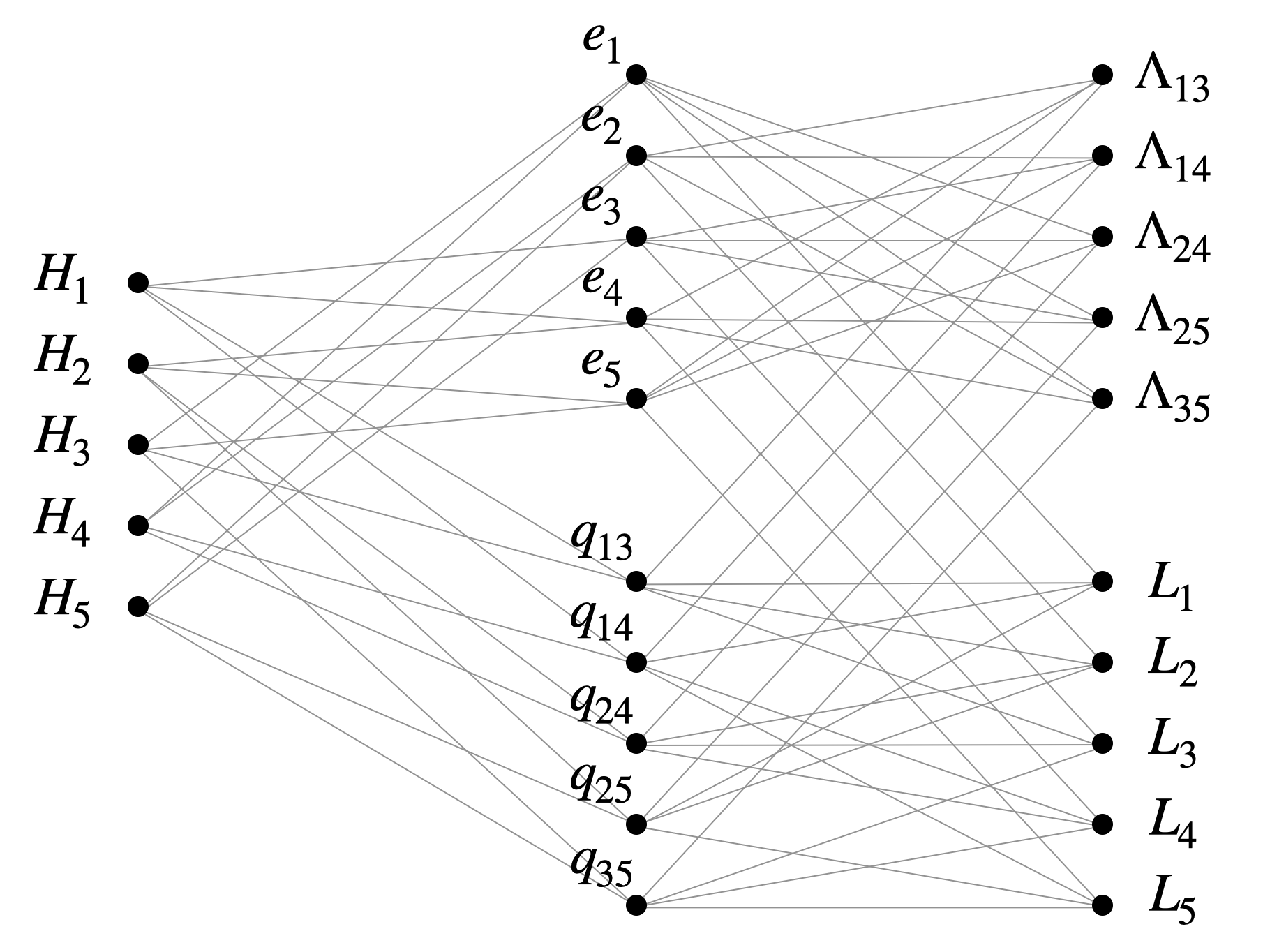} 
\quad \quad 
\includegraphics[height = 4.5cm]{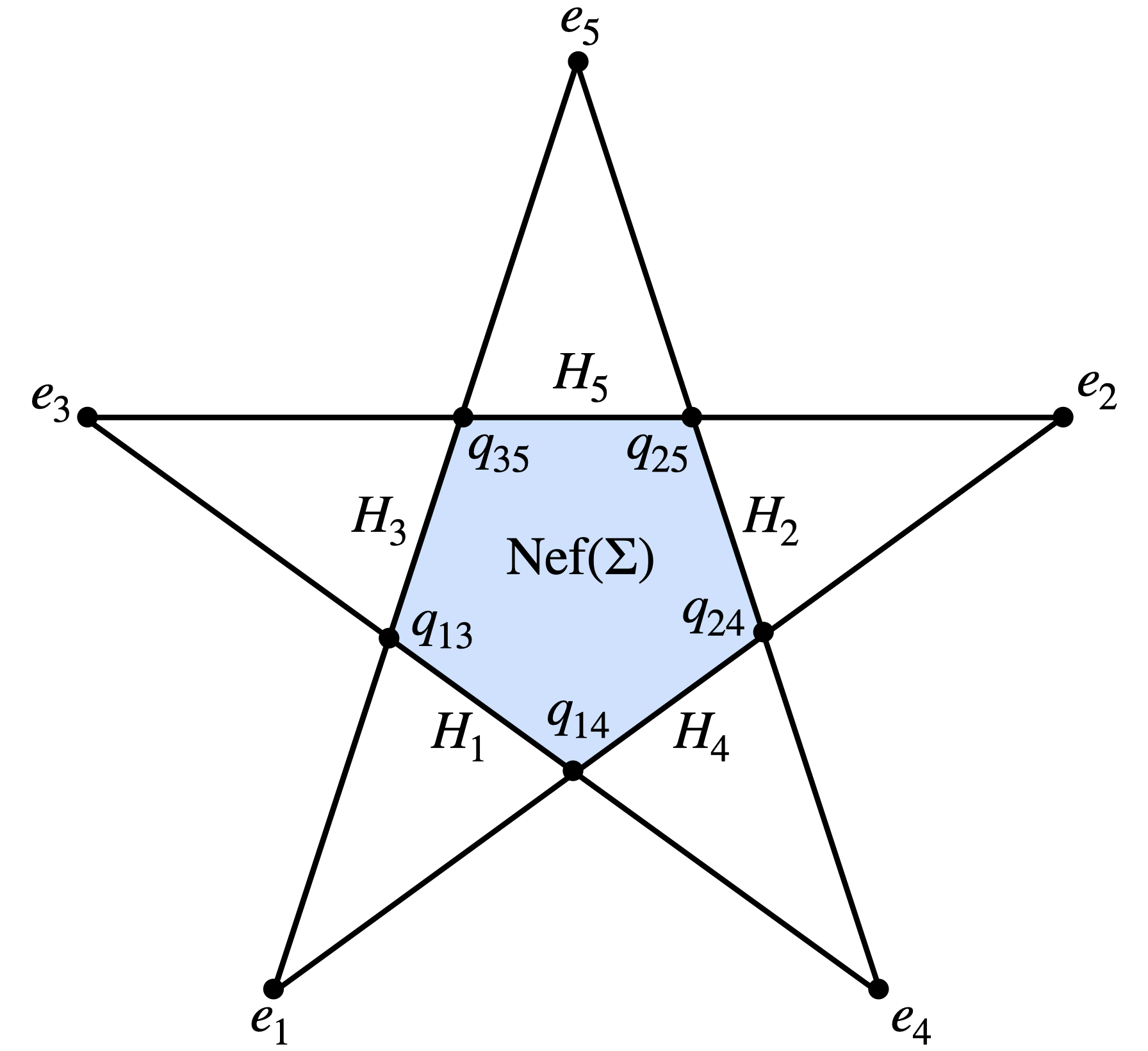}
\caption{The $(15_4,10_6)$ configuration of the universal adjoint of a pentagon.}
\label{fig:config}
\end{figure}
The Fano scheme of planes contained in ${\cal A}_P$ consists of 15 points in ${\rm Gr}(3,5)$, given by $[\Lambda_{i,i+1}], [H_i]$ and $[L_i]$. The Fano scheme of lines in ${\cal A}_P$ is a surface in ${\rm Gr}(2,5)$ with 21 components \cite[Section 4]{Dolgachev}. Out of these, 15 consist of the lines contained in the planes. The other six are degree five del Pezzo surfaces in the Pl\"ucker embedding ${\rm Gr}(2,5) \subset \mathbb{P}^9$.

\subsection{The three-dimensional associahedron} \label{subsec:assoc}
The $d$-dimensional associahedron is a simple convex polytope whose vertices correspond to the triangulations of the $(d+3)$-gon \cite{lee1989associahedron}. The one-dimensional associahedron is a line segment, and the two-dimensional associahedron is a pentagon. As pointed out in the Introduction, the toric amplitude of certain realizations of the associahedron is the bi-adjoint scalar $\phi^3$ amplitude in particle physics. These realizations are called \emph{ABHY associahedra}, after the authors of \cite{arkani2018scattering}. A two-dimensional ABHY associahedron appeared in Example \ref{ex:pentagonintro}. Its singular locus and Fano schemes are described in Section \ref{subsec:pentagon} after setting $u_{ij} = 1$. Here, we analyze a three-dimensional ABHY associahedron, shown in Figure \ref{fig:ABHY3}. 
\begin{figure}[h!]
    \centering
    \includegraphics[height = 3.5cm]{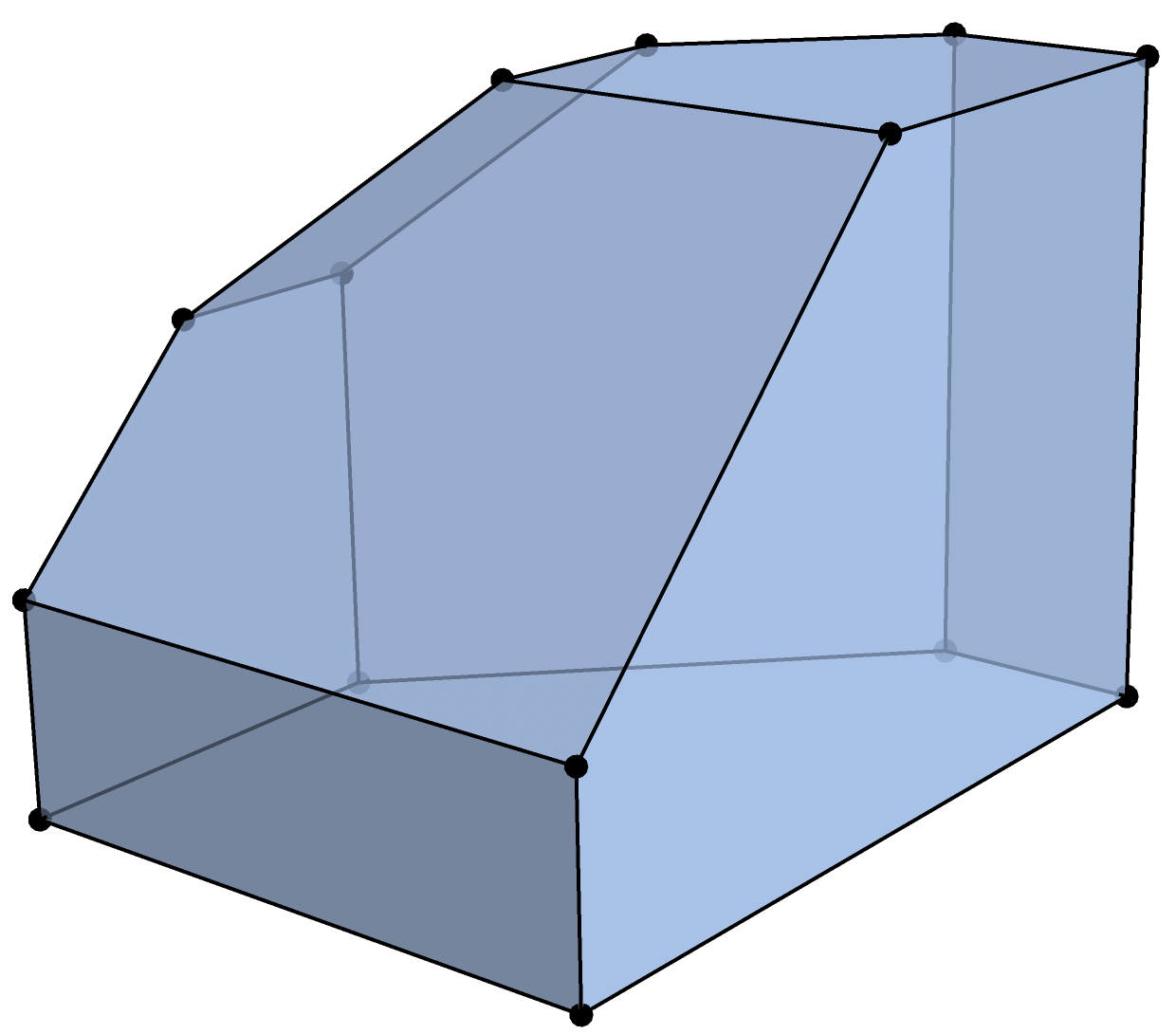}
    \caption{A three-dimensional ABHY associahedron.}
    \label{fig:ABHY3}
\end{figure}
There are nine facets, 21 edges and 14 vertices. 
The nine facet inequalities are $u_i \cdot y + z_i \geq 0$, where $u_i$ are the rows of 
\[ U \, = \, \begin{pmatrix}
    -1 & 0 & 0 & 1 & 1 & 1 & 0 & 0 & 0 \\ 
    0 & -1 & 0 & -1 & 0 & 0 & 1 & 1 & 0 \\ 
    0 & 0 & -1 & 0 & -1 & 0 & -1 & 0 & 1
\end{pmatrix}^t\]
and $z = (3,4,3,2,2,0,1,0,0)$. Let $\Sigma$ be the normal fan of $P$. Each facet of $P$ corresponds to a ray of $\Sigma$, and to a generator of the ring $R_\Sigma = \mathbb{C}[x_{13},x_{14},x_{15},x_{24},x_{25},x_{26},x_{35},x_{36},x_{46}]$. Here the order in which the generators are listed is compatible with the columns of $U$. We use the notation $x_{ij}$ to emphasize that the facet of the variable $x_{ij}$ corresponds to the diagonal $(i,j)$ of the hexagon. The references \cite{arkani2018scattering,arkani2024hidden} use $X_{i,j}$ instead. The pentagonal facets correspond to $x_{13}, x_{24}, x_{35}, x_{46}, x_{15}, x_{26}$, and the quadrilaterals to $x_{14},x_{25},x_{36}$. The toric amplitude ${\rm Amp}_P$ is 
\begin{align*}
    &\frac{1}{x_{15} x_{25} x_{35}}+
 \frac{1}{x_{13} x_{14} x_{46}}+
 \frac{1}{x_{13} x_{14} x_{15}}+
 \frac{1}{x_{13} x_{15} x_{35}}+
 \frac{1}{x_{13} x_{35} x_{36}}+
 \frac{1}{x_{14} x_{15} x_{24}}+
 \frac{1}{x_{13} x_{36} x_{46}}
 \\ + &
 \frac{1}{x_{14} x_{24} x_{46}}+
 \frac{1}{x_{15} x_{24} x_{25}}+
 \frac{1}{x_{24} x_{25} x_{26}}+
 \frac{1}{x_{24} x_{26} x_{46}}+
 \frac{1}{x_{25} x_{26} x_{35}}+
 \frac{1}{x_{26} x_{35} x_{36}}+
 \frac{1}{x_{26} x_{36} x_{46}},
\end{align*}  
where each term is a triangulation of the hexagon. The numerator of this rational function defines the universal adjoint hypersurface ${\cal A}_P \subset \mathbb{P}^8$. It has degree six, and its defining equation ${\rm Adj}_P(x)=0$ has 14 squarefree terms. There are 21 six-planes contained in ${\cal A}_P$. The first 15 of them are coordinate subspaces defined by the following radical monomial ideal: 
\[
\renewcommand\arraystretch{1.2}
\begin{matrix}
B(\Sigma) \, = \, \langle x_{46}, x_{35}\rangle \cap
 \langle x_{46}, x_{25}\rangle \cap
 \langle x_{46}, x_{15}\rangle \cap
 \langle x_{36}, x_{25}\rangle \cap
 \langle x_{36}, x_{24}\rangle \cap
 \langle x_{36}, x_{15}\rangle \cap
 \langle x_{36}, x_{14}\rangle \cap  \\ 
 \langle x_{35}, x_{24}\rangle 
 \cap \langle x_{35}, x_{14}\rangle \cap
 \langle x_{26}, x_{15}\rangle \cap
 \langle x_{26}, x_{14}\rangle \cap
 \langle x_{26}, x_{13}\rangle \cap
 \langle x_{25}, x_{14}\rangle \cap
 \langle x_{25}, x_{13}\rangle \cap
 \langle x_{24}, x_{13}\rangle.
\end{matrix}
\]
This is the Stanley-Reisner ideal of the Alexander dual of $\Sigma$. In toric geometry, $B(\Sigma)$ is the irrelevant ideal in the Cox ring of the normal toric variety $X_\Sigma$ \cite[Chapter 5]{CoxLittleSchenck2011}. Its variety is a union of coordinate subspaces, denoted by $Z(\Sigma)$. By Proposition \ref{prop:containedinB}, we have $Z(\Sigma) \subseteq {\cal A}_\Sigma$. The other six 6-planes in ${\cal A}_P$ come in three pairs; one pair for each quadrilateral facet:
\[  \langle x_{14}, x_{13} + x_{24} \rangle,  
 \langle x_{14}, x_{15} + x_{46} \rangle,  
 \langle x_{25}, x_{24} + x_{36} \rangle, 
 \langle x_{25}, x_{26} + x_{15} \rangle, 
 \langle x_{36}, x_{35} + x_{46} \rangle, 
 \langle x_{36}, x_{13} + x_{26} \rangle. \]
The restriction of ${\rm Adj}_P$ to the coordinate hyperplane $x_{14} = 0$ is, up to a squarefree monomial factor, the universal adjoint of the facet labeled by $x_{14}$, see Lemma \ref{lem:restrictpolytope}. That quadrilateral facet has a degenerate quadratic adjoint which factors as $(x_{13} + x_{24})(x_{15} + x_{46})$ (Section \ref{subsec:quadrilateral}). This explains $\langle x_{14}, x_{13} + x_{24} \rangle,  
 \langle x_{14}, x_{15} + x_{46} \rangle$. The other four are explained in the same way. 

To analyze the singular locus of ${\cal A}_P$, we distinguish between components which are contained in $Z(\Sigma)$ and components which are not. This is motivated in Section \ref{sec:singlocus}. Let $I_{\rm sing} \subset R_\Sigma$ be the ideal generated by the nine partial derivatives of ${\rm Adj}_P$. Let $\mathfrak{p}_i, i = 1, \ldots, 15$ be the 15 minimal primes of $B(\Sigma)$. We compute the primary decomposition of each $I_{\rm sing} + \mathfrak{p}_i$ using a computer algebra system, such as \texttt{Oscar.jl} \cite{OSCAR}. This results in a list of 133 distinct associated primes. One of them defines a five-plane contained in three components of $Z(\Sigma)$: $\langle x_{36},x_{25},x_{14} \rangle$. The other 132 define four-dimensional components of ${\rm Sing}({\cal A}_\Sigma)$. There are 114 four-planes, twelve components of degree three, and six components of degree seven. We shall analyze the combinatorics of this arrangement further in Example \ref{ex:singassoc} using Proposition \ref{prop:singirrel}. 

To find the components of ${\rm Sing}({\cal A}_P)$ which are not contained in $B(\Sigma)$, we compute the saturation $I_{\rm sing}: B(\Sigma)^\infty$ and decompose the result. This gives two planes and three additional four-dimensional components of degree three. All these computations take no more than a few seconds. The code and a list of all components is available at \cite{mathrepo}.

\paragraph{Motivation from physics.} We briefly explain how our construction arises from scattering amplitudes. A central objective in theoretical particle physics is to make predictions for the outcome of collider experiments. For our purpose, one should imagine a total of $m = d+3$ particles entering and exiting the collider. The particles interact or \emph{scatter} inside the collider, and the experiment is called a \emph{scattering process}. 
The \emph{scattering amplitude} ${\rm Amp}(p_1, \ldots, p_m)$ is a function of the momentum vectors $p_1, \ldots, p_m \in \mathbb{R}^{1,D-1}$ associated with each of the particles. Here $D$ is the space-time dimension, which can be an arbitrary positive integer in a theoretical setup. The space $\mathbb{R}^{1,D-1}$ is the $D$-dimensional \emph{Minkowski space}, which is the vector space $\mathbb{R}^D$ endowed with the \emph{Minkowski inner product} $p \cdot q = p_1q_1 - p_2q_2 - \cdots - p_Dq_D$. The squared absolute value of the amplitude is a joint probability distribution describing the scattering process. Computing it analytically is in general an extremely hard task. 

The amplitude function depends on the physical theory governing the scattering process. Our setup is motivated by \emph{biadjoint scalar $\phi^3$-theory with tree-level interactions}, for which the amplitude turns out to be a rational function in the entries of the momentum vectors $p_i$. To compute the amplitude, we must sum over all possible interaction patterns that can happen inside the collider. Such an interaction pattern is conventionally represented by a graph, called \emph{Feynman diagram} \cite[\S 2.2]{WeinzierlFeynmanDiagrams}. Here one should imagine that the in- and outgoing particles, as well as newly created particles inside the collider, travel along the edges of the graph. \emph{Tree level interactions} means that the only graphs which are allowed are trees, and \emph{biadjoint $\phi^3$} restricts us further to trivalent planar trees with $m$ labeled leaves. These are dual to the triangulations of the $m$-gon, which are the vertices of the associahedron of dimension $d = m-3$. A triangulation $T$ uses $d$ diagonals $(i,j)$ of the $m$-gon, each corresponding to a facet of the associahedron. The contribution of the triangulation $T$ to the amplitude is 
\[ \frac{1}{\prod_{(i,j) \in T} x_{ij}}, \quad \text{where} \quad x_{ij} = (p_i + p_{i+1} + \cdots + p_{j-1}) \cdot (p_i + p_{i+1} + \cdots + p_{j-1}). \]
Here $\cdot$ is the Minkowski inner product. This formula is prescribed by the \emph{Feynman rules} \cite[\S 2.3]{WeinzierlFeynmanDiagrams}. The sum of these contributions gives the toric amplitude ${\rm Amp}_P$ of a smooth realization of the associahedron. In particular, one can let $P$ be the ABHY associahedron from \cite{arkani2018scattering}. In fact, the more general \emph{CEGM amplitudes} \cite{cachazo2019scattering} arise as toric amplitudes in a similar manner. 

Figure \ref{fig:scatterm=5} illustrates the above discussion for $m = 5$. The vertices of the $2$-dimensional associahedron are the five triangulations of the pentagon (orange), or the five planar trivalent trees with five labeled leaves (blue).  Its edges are the five diagonals of the pentagon. Relabeling the edge variables $x_1 \rightarrow x_{13}$, \ldots, $x_5 \rightarrow x_{35}$ from Example \ref{ex:pentagonintro} to make this correspondence explicit, we see that \eqref{eq:amppentagon} sums the contribution of each tree to the scattering amplitude. Similarly, the $m = 6$ amplitude is a sum over 14 Feynman diagrams. These are the trivalent trees dual to the 14 triangulations of the hexagon, each corresponding to a vertex in Figure~\ref{fig:ABHY3}.
\begin{figure}
    \centering
    \includegraphics[width=0.65\linewidth]{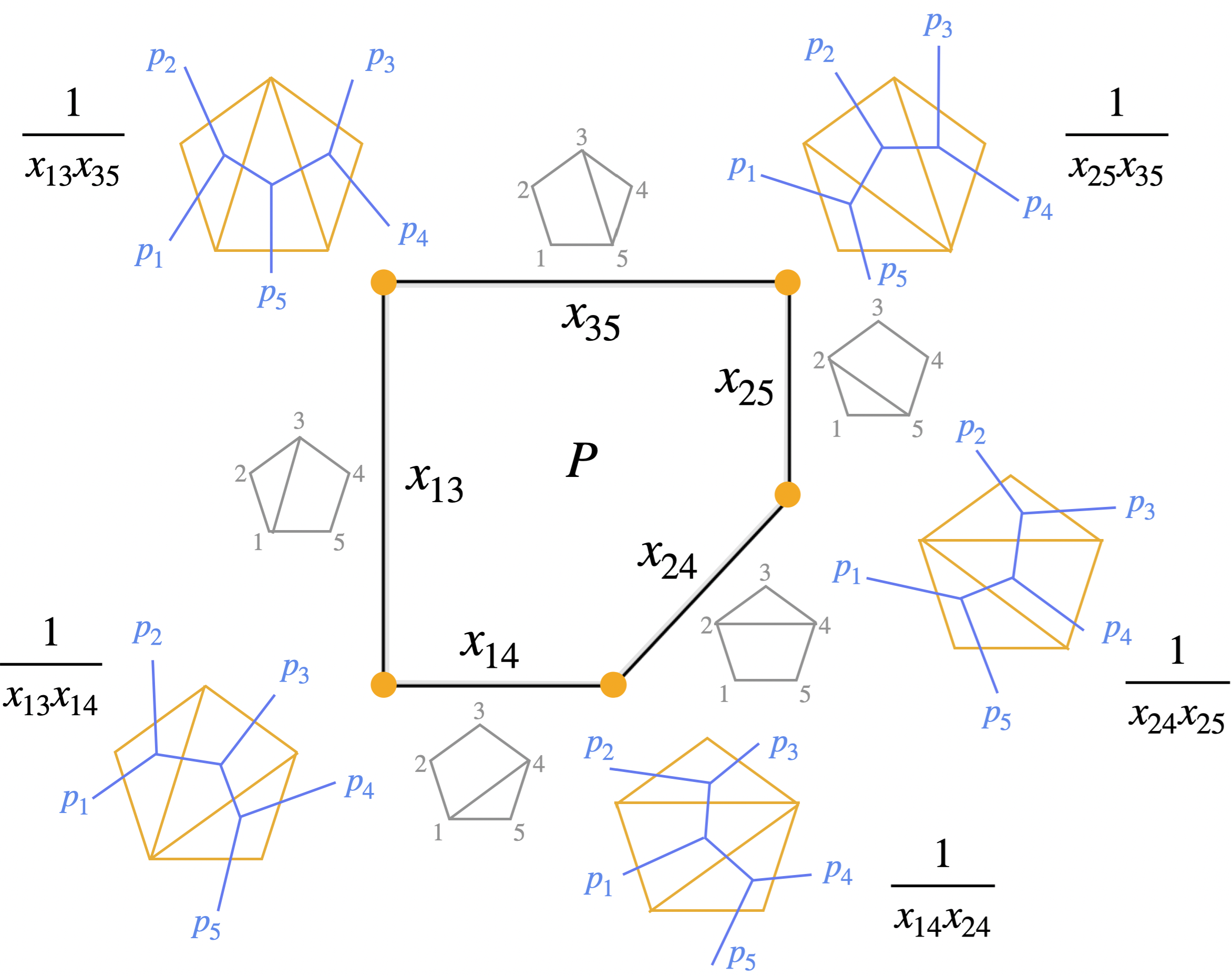}
    \caption{The five terms in \eqref{eq:amppentagon} correspond to the five triangulations of the pentagon.}
    \label{fig:scatterm=5}
\end{figure}

\section{First properties} \label{sec:firstprop}

In this section, we state and illustrate some properties of amplitudes and adjoints. 
Both ${\rm Amp}_\Sigma$ and ${\rm Adj}_\Sigma$ depend on a choice of $U \in \mathbb{R}^{n \times d}$, i.e., a choice of ray generators. The hypersurface ${\cal A}_\Sigma \subset \mathbb{P}^{n-1}$ defined by ${\rm Adj}_\Sigma$ only depends on the ${\rm GL}_d(\mathbb{R})$-orbit of $U$. Scaling the rows of $U$ induces an action of $(\mathbb{R}^\times)^n$ on $R_\Sigma$. This changes our hypersurface, so its singular locus and Fano schemes will depend on the scaling. We shall make this dependence on $U$ explicit when needed by writing ${\rm Amp}_{\Sigma,U}$, ${\rm Adj}_{\Sigma,U}$  and ${\cal A}_{\Sigma,U}$ instead. Here are some more examples. 

\begin{example} \label{ex:simpleex}
If $\Sigma$ has no full-dimensional cones, then ${\rm Adj}_{\Sigma,U} = 0$. 
If $\Sigma$ consists of a $d$-dimensional simplicial cone $\sigma$ and all its faces, then ${\rm Adj}_{\Sigma,U} = |\det U|$. If $P \subset \mathbb{R}^d$ is a $d$-dimensional simplex, i.e., $\Sigma_P$ is complete with $d+1$ rays, then ${\rm Adj}_{\Sigma,U}$ is a linear form in $x_\rho, \rho \in \Sigma(1)$, given by the $(d+1) \times (d+1)$ determinant ${\rm Adj}_{\Sigma,U} =  \pm \det \begin{pmatrix}
    U & x
\end{pmatrix}$. 
\end{example}

A first simple observation is that amplitudes and adjoints behave nicely under taking products. Let $\Sigma_i$ be a simplicial fan in $\mathbb{R}^{d_i}$ for $i = 1,2$. The ray matrices are $U_1 \in \mathbb{R}^{n_1 \times d_1}$ and $U_2 \in \mathbb{R}^{n_2 \times d_2}$. The product fan $\Sigma = \Sigma_1 \times \Sigma_2$ (see \cite[Proposition 3.1.14]{CoxLittleSchenck2011}) has ray matrix 
\[ U_1 \oplus U_2 \, = \, \begin{pmatrix}
    U_1 & 0 \\ 0 & U_2
\end{pmatrix} \, \, \in \, \mathbb{R}^{(n_1+n_2) \times (d_1 + d_2)}.\]
The universal adjoint ${\rm Adj}_{\Sigma_i, U_i}$ is an element of the polynomial ring $R_{\Sigma_i}$ with $n_i$ variables. In the following lemma, this is naturally viewed as a subring of $R_\Sigma = R_{\Sigma_1} \otimes_{\mathbb{C}} R_{\Sigma_2}$. 
\begin{lemma} \label{lem:factorfans}
    The toric amplitude of the product fan $\Sigma = \Sigma_1 \times \Sigma_2$ satisfies ${\rm Amp}_{\Sigma, U_1 \oplus U_2} = {\rm Amp}_{\Sigma_1,U_1} \cdot {\rm Amp}_{\Sigma_2,U_2}$ and we have ${\rm Adj}_{\Sigma,U_1 \oplus U_2} = {\rm Adj}_{\Sigma_1,U_1} \cdot {\rm Adj}_{\Sigma_2,U_2}$. For polytopes $P_1,P_2$ we have ${\rm Amp}_{P_1 \times P_2,U_1 \oplus U_2} = {\rm Amp}_{P_1,U_1} \cdot {\rm Amp}_{P_2,U_2}$ and ${\rm Adj}_{P_1 \times P_2,U_1 \oplus U_2} = {\rm Adj}_{P_1,U_1} \cdot {\rm Adj}_{P_2,U_2}$.
\end{lemma}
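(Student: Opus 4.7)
The plan is to unpack how the data of $\Sigma = \Sigma_1 \times \Sigma_2$ decomposes combinatorially and linearly, and then to see that the sum defining ${\rm Amp}_\Sigma$ literally factors as a product of two independent sums. First I would recall (from \cite[Proposition 3.1.14]{CoxLittleSchenck2011}) that the top-dimensional cones of $\Sigma$ are exactly the products $\sigma_1 \times \sigma_2$ with $\sigma_i \in \Sigma_i(d_i)$, and that the rays of $\Sigma(1)$ are the disjoint union of $\Sigma_1(1) \times \{0\}$ and $\{0\} \times \Sigma_2(1)$. In particular $n = n_1 + n_2$ and the ray variables of $R_\Sigma$ partition into two disjoint sets, matching the factorization $R_\Sigma = R_{\Sigma_1} \otimes_\mathbb{C} R_{\Sigma_2}$ used in the statement.

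Next I would observe that for $\sigma = \sigma_1 \times \sigma_2$, the submatrix $(U_1 \oplus U_2)_\sigma$ is block diagonal with diagonal blocks $(U_1)_{\sigma_1}$ and $(U_2)_{\sigma_2}$, so
\[
\bigl|\det (U_1 \oplus U_2)_\sigma\bigr| \,=\, |\det (U_1)_{\sigma_1}| \cdot |\det (U_2)_{\sigma_2}|.
\]
Similarly, the rays of $\sigma$ split as $\sigma_1(1) \sqcup \sigma_2(1)$, so the monomial $\prod_{\rho \in \sigma(1)} x_\rho$ factors accordingly. Plugging into \eqref{eq:Amp} and using that a sum indexed by $\Sigma_1(d_1) \times \Sigma_2(d_2)$ of a product of independent terms equals the product of the two sums yields ${\rm Amp}_{\Sigma, U_1 \oplus U_2} = {\rm Amp}_{\Sigma_1,U_1}\cdot {\rm Amp}_{\Sigma_2,U_2}$. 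The identity for the universal adjoints then follows from \eqref{eq:Adj} together with the factorization $\prod_{\rho \in \Sigma(1)} x_\rho = \prod_{\rho \in \Sigma_1(1)} x_\rho \cdot \prod_{\rho \in \Sigma_2(1)} x_\rho$.

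For the polytope statement I would invoke the standard fact that the normal fan of a product of polytopes is the product of the normal fans, i.e. $\Sigma_{P_1 \times P_2} = \Sigma_{P_1} \times \Sigma_{P_2}$, and that a choice of ray generators for both factors induces the block-diagonal ray matrix $U_1 \oplus U_2$ for the product. The polytope statement then reduces immediately to the fan statement.

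There is no real obstacle here; the proof is essentially an exercise in bookkeeping. The only mild subtlety is being careful that the partition of $\Sigma(1)$ into the two ray sets matches the tensor decomposition $R_\Sigma = R_{\Sigma_1} \otimes_\mathbb{C} R_{\Sigma_2}$, so that the factored expression literally lives in the appropriate rings as claimed.
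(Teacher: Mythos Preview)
Your proposal is correct and follows essentially the same approach as the paper: the paper's proof is the single sentence ``This follows easily from the fact that $\Sigma(d_1+d_2) = \{ \sigma_1 \times \sigma_2 : \sigma_i \in \Sigma_i(d_i)\}$,'' and your argument simply unpacks the bookkeeping behind that claim.
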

\begin{proof}
    This follows easily from the fact that $\Sigma(d_1+d_2) = \{ \sigma_1 \times \sigma_2 \, : \, \sigma_i \in \Sigma_i(d_i)\}$. 
\end{proof}

\begin{example}
    The normal fan of the cube $P \subset \mathbb{R}^3$ has the following matrix of ray generators: 
    \[ U \, = \, \begin{pmatrix}
        e_1 & -e_1 & e_2 & -e_2 & e_3 & -e_3
    \end{pmatrix}^t \, = \, \begin{pmatrix}
        1 & -1 
    \end{pmatrix}^t \oplus \begin{pmatrix}
        1 & -1 
    \end{pmatrix}^t \oplus \begin{pmatrix}
        1 & -1 
    \end{pmatrix}^t \, \, \in \, \mathbb{R}^{6 \times 3}.\]
    The toric amplitude factors as ${\rm Amp}_{P} = (\frac{1}{x_1} + \frac{1}{x_2}) \cdot (\frac{1}{x_3} + \frac{1}{x_4}) \cdot (\frac{1}{x_5} + \frac{1}{x_6}) $ and, similarly, the universal adjoint is the reducible cubic polynomial ${\rm Adj}_P = (x_1 + x_2) \cdot (x_3 + x_4) \cdot (x_5 + x_6)$.
\end{example}

Next, we study restrictions of ${\rm Adj}_\Sigma$ to coordinate subspaces. 
For any $k$-dimensional cone $\tau \in \Sigma(k)$, let $\Sigma_\tau$ denote the \emph{star fan} of $\Sigma$ at $\tau$. This is the simplicial fan given by 
\[ \Sigma_\tau \, = \, \{ \overline{\sigma} \subseteq \mathbb{R}^d/{\rm span}_{\mathbb{R}}(\tau) \, : \, \text{$\tau$ is a face of $\sigma \in \Sigma$} \},\]
where $\overline{\sigma}$ is the image of $\sigma$ under the projection $\mathbb{R}^d \rightarrow \mathbb{R}^d/{\rm span}_{\mathbb{R}}(\tau) \simeq \mathbb{R}^{d-k}$. We define 
\begin{equation} \label{eq:nb}
    {\rm nb}(\tau) \, = \, \{ \rho \in \Sigma(1)  \, : \, \text{ $\tau(1) \cup \{ \rho \} = \sigma(1)$ for some $\sigma \in \Sigma(k+1)$} \}.
\end{equation}
Here nb stands for ``neighbors''. Projecting along ${\rm span}_{\mathbb{R}}(\tau)$ gives a one-to-one correspondence 
\[ {\rm nb}(\tau) \, \overset{1:1}{\longleftrightarrow} \,  \Sigma_\tau(1) \,  = \, \{ \overline{\rho} \, : \, \rho \in {\rm nb}(\tau)  \} .\] 

We compute a ray matrix $U_\tau$ for $\Sigma_\tau$ as follows. 
We replace $U$ by $U\cdot T_\tau$, where $T_\tau$ is an invertible $d\times d$ matrix such that the rows of $U \cdot T_\tau$ labeled by $\tau(1)$ are $e_1, \ldots, e_k$. The matrix $T_\tau$ determines a change of coordinates in $\mathbb{R}^d$. Let $U_\tau$ be the submatrix of $U \cdot T_\tau$ consisting of the last $d-k$ columns and the rows indexed by ${\rm nb}(\tau)$. Notice that, for each $\sigma \in \Sigma(d)$ such that $\tau \subseteq \sigma$, we have $|\det (U_\tau)_{\overline{\sigma}}| = c_\tau \cdot  |\det U_\sigma|$, where $c_\tau = |\det T_\tau|$. 
The adjoint ${\rm Adj}_{\Sigma_\tau, U_\tau}$ is a polynomial in $ R_{\Sigma_\tau} = \mathbb{C}[x_{\overline{\rho}} : \rho \in {\rm nb}(\tau)]$, which we view as a subring of $R_\Sigma$. Define 
\begin{equation} \label{eq:Lambdatau} \Lambda_{\tau} \, = \, V(x_\rho \, : \, \rho \in \tau(1) ) \, = \, \{ x \in \mathbb{P}^{n-1} \, : \, x_\rho = 0 \text{ for all } \rho \in \tau(1) \}.\end{equation}

\begin{lemma} \label{lem:restrict}
    The restriction of ${\rm Adj}_{\Sigma,U}(x)$ to the $(n-k-1)$-dimensional subspace $\Lambda_\tau$ equals 
    \[ ({\rm Adj}_{\Sigma,U})_{|\Lambda_\tau} \, = \, c_\tau^{-1} \cdot \Big ( \prod_{\substack{\rho \notin {\rm nb}(\tau) \\ \rho \not \in \tau(1)} }  x_{\rho} \Big ) \cdot {\rm Adj}_{\Sigma_\tau,U_\tau}. \]
\end{lemma}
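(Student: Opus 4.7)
The plan is to substitute $x_\rho = 0$ for every $\rho \in \tau(1)$ directly in the defining sum \eqref{eq:Adj} and track what survives. A monomial $\prod_{\rho \notin \sigma(1)} x_\rho$ survives the substitution if and only if $\tau(1) \subseteq \sigma(1)$, which, since $\Sigma$ is simplicial, is the same as saying $\tau$ is a face of $\sigma$. The full-dimensional cones $\sigma \in \Sigma(d)$ containing $\tau$ are in bijection with $\Sigma_\tau(d-k)$ via $\sigma \mapsto \overline{\sigma}$, so the restricted adjoint is naturally indexed by $\overline{\sigma} \in \Sigma_\tau(d-k)$. My goal is to rewrite each surviving term so as to recognize it as the corresponding term of $c_\tau^{-1}\bigl(\prod_{\rho \notin {\rm nb}(\tau),\,\rho \notin \tau(1)} x_\rho\bigr) \cdot {\rm Adj}_{\Sigma_\tau, U_\tau}$.

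The second step is to compare determinant coefficients. By construction of $T_\tau$, the rows of $UT_\tau$ indexed by $\tau(1)$ are $e_1, \ldots, e_k$. For any $\sigma \in \Sigma(d)$ containing $\tau$, a Laplace expansion of $\det(UT_\tau)_\sigma$ along those $k$ rows reduces, up to sign, to the determinant of the $(d-k)\times(d-k)$ block of $UT_\tau$ with rows $\sigma(1)\setminus\tau(1)$ and columns $k+1, \ldots, d$. Every $\rho \in \sigma(1)\setminus\tau(1)$ lies in ${\rm nb}(\tau)$, because $\tau(1) \cup \{\rho\}$ spans a $(k+1)$-face of $\sigma \in \Sigma(d)$, so this block is exactly $(U_\tau)_{\overline{\sigma}}$. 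Combined with $|\det(UT_\tau)_\sigma| = c_\tau \cdot |\det U_\sigma|$, this yields $|\det U_\sigma| = c_\tau^{-1} \cdot |\det(U_\tau)_{\overline{\sigma}}|$, producing the overall factor $c_\tau^{-1}$.

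The third step is to split the product $\prod_{\rho \notin \sigma(1)} x_\rho$ according to the disjoint partition $\Sigma(1) = \tau(1) \sqcup {\rm nb}(\tau) \sqcup R$, where $R = \Sigma(1)\setminus(\tau(1)\cup{\rm nb}(\tau))$ and where ${\rm nb}(\tau) \cap \tau(1) = \emptyset$ is immediate from \eqref{eq:nb}. For every $\sigma$ containing $\tau$, the rays in $R$ are all outside $\sigma(1)$ (since $\sigma(1)\setminus\tau(1)\subseteq{\rm nb}(\tau)$), so the factor $\prod_{\rho \in R}x_\rho$ is independent of $\sigma$ and pulls out of the sum, producing the monomial displayed in the claim. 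The remaining product ranges over $\rho \in {\rm nb}(\tau)\setminus\sigma(1)$, which translates under $\rho \leftrightarrow \overline{\rho}$ into the product over $\overline{\rho} \in \Sigma_\tau(1)\setminus\overline{\sigma}(1)$, so the residual sum matches the definition of ${\rm Adj}_{\Sigma_\tau, U_\tau}$ in the variables $x_{\overline{\rho}}$.

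The only real obstacle is bookkeeping: verifying that the three-way partition of $\Sigma(1)$ is disjoint, that $\sigma(1)\setminus\tau(1)\subseteq{\rm nb}(\tau)$, and that the Laplace expansion is aligned with the rows and columns selected in $U_\tau$. A sign error in the Laplace step is harmless because we only ever compare absolute values of determinants.
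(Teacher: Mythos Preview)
Your proof is correct and follows essentially the same approach as the paper: restrict the defining sum by setting $x_\rho=0$ for $\rho\in\tau(1)$, observe that the surviving terms are exactly those with $\tau\subseteq\sigma$, factor out the common monomial over $\Sigma(1)\setminus({\rm nb}(\tau)\cup\tau(1))$, and use the determinant identity $|\det(U_\tau)_{\overline{\sigma}}|=c_\tau\,|\det U_\sigma|$ to identify the residual sum with $c_\tau^{-1}\,{\rm Adj}_{\Sigma_\tau,U_\tau}$. The paper quotes that determinant identity from the setup preceding the lemma, whereas you re-derive it via a Laplace expansion, but this is just added detail rather than a different argument.
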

\begin{proof} 
By the definition of ${\rm Adj}_{\Sigma,U}$, we have that 
    \[ ({\rm Adj}_{\Sigma,U})_{|\Lambda_\tau} \, = \,  \Big ( \prod_{\substack{\rho \notin {\rm nb}(\tau) \\ \rho \not \in \tau(1)} } x_{\rho} \Big ) \cdot \Big (\sum_{\substack{ \sigma \in \Sigma(d) \\ \tau \subseteq \sigma} } | \det U_\sigma | \cdot \prod_{\substack{\rho' \in {\rm nb}(\tau)\\ \rho' \not \in \sigma(1)}} x_{\rho'} \Big ). \]
The product in the first factor is over $\Sigma(1) \setminus ({\rm nb}(\tau) \cup \tau(1))$, and the sum in the second factor is over all $\sigma \in \Sigma(d)$ which contain $\tau$ as a face. By construction, the minors satisfy $|\det (U_\tau)_{\overline{\sigma}}| = c_\tau \cdot  |\det U_\sigma|$, for each $\sigma \in \Sigma(d)$ such that $\tau \subseteq \sigma$. Hence, the second factor is 
\[ \sum_{\substack{ \sigma \in \Sigma(d) \\ \tau \subseteq \sigma} } | \det U_\sigma | \cdot \prod_{\substack{\rho' \in {\rm nb}(\tau)\\ \rho' \not \in \sigma(1)}} x_{\rho'}  \, = \, c_\tau^{-1} \cdot \sum_{\overline{\sigma} \in \Sigma_\tau(d-k)} | \det (U_\tau)_{\overline{\sigma}} | \cdot \prod_{ \overline{\rho'} \not \in \overline{\sigma}(1)} x_{\rho'} \, = \, c_\tau^{-1} \cdot {\rm Adj}_{\Sigma_\tau,U_\tau}. \qedhere\]
\end{proof}

\begin{example} \label{ex:square}
The fan $\Sigma$ in the left part of Figure \ref{fig:fansSec2}, with ray matrix $U = \left ( \begin{smallmatrix}
    1 & 0 & -1 & 0 \\ 0 & 1 & 0 & -1
\end{smallmatrix}\right )^t$, has the following universal adjoint: ${\rm Adj}_{\Sigma,U} = x_3x_4 + x_1x_4 + x_1x_2 + x_2x_3$. This depends on the choice of $U$ and on the fan structure: setting $U' = \left (\begin{smallmatrix}
    3 & 0 & -1 & 0 \\ 0 & 1 & 0 & -1
\end{smallmatrix}\right)^t $ gives ${\rm Adj}_{\Sigma,U'} = 3\, x_3x_4 + x_1x_4 + x_1x_2 + 3 \, x_2x_3$, and using the fan $\Sigma'$ in the middle of Figure \ref{fig:fansSec2} instead of $\Sigma$ gives ${\rm Adj}_{\Sigma',U} = x_3x_4 + x_1x_2$. 

Setting $x_1 = 0$ in ${\rm Adj}_{\Sigma,U}$ gives $x_3x_4 + x_2x_3 = x_3(x_2+x_4)$. We match this with Lemma \ref{lem:restrict}. We have ${\rm nb}(\rho_1) = \{ \rho_2, \rho_4 \}$. Since the first row of $U$ is $e_1 = (1,0)$, we can use $T_{\rho_1} = {\rm id}_{2 \times 2}$ and $c_{\rho_1} = 1$. The star fan $\Sigma_{\rho_1}$ of $\Sigma$ at $\rho_1$ is the complete fan in $\mathbb{R}^1$, whose ray matrix $U_{\rho_1}$ is a submatrix of the second column of $U$. The corresponding adjoint is ${\rm Adj}_{\Sigma_{\rho_1},U_{\rho_1}} = x_2 + x_4$. 

Setting $x_1 = 0$ in ${\rm Adj}_{\Sigma',U}$ gives $x_3x_4$. Rays 3 and 4 do not belong to ${\rm nb}(\rho_1)$. The star fan of $\Sigma'$ at $\rho_1$ consists of a ray and its face $\{0\}$. Its universal adjoint is the constant $1$. 
\begin{figure}
        \centering
        \includegraphics[height= 3.0cm]{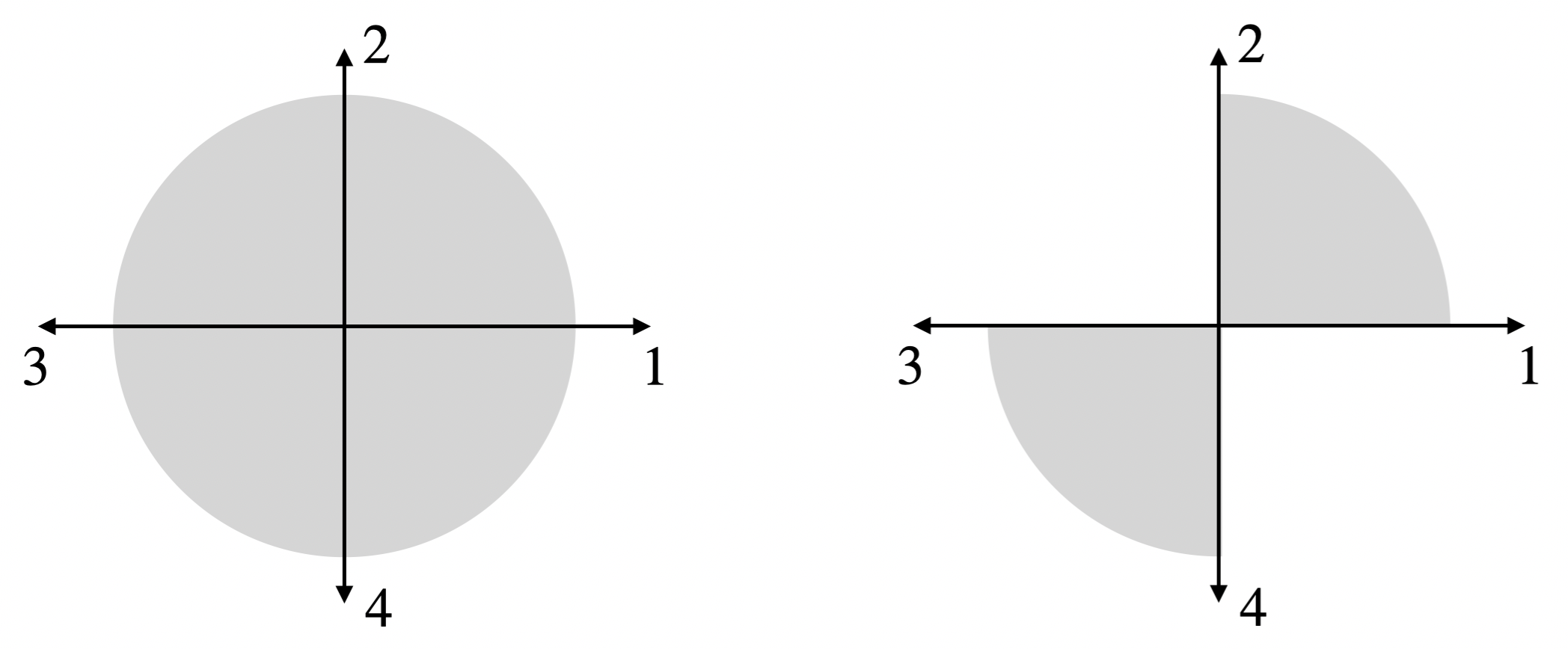}
        \quad \quad 
        \includegraphics[height = 3.0cm]{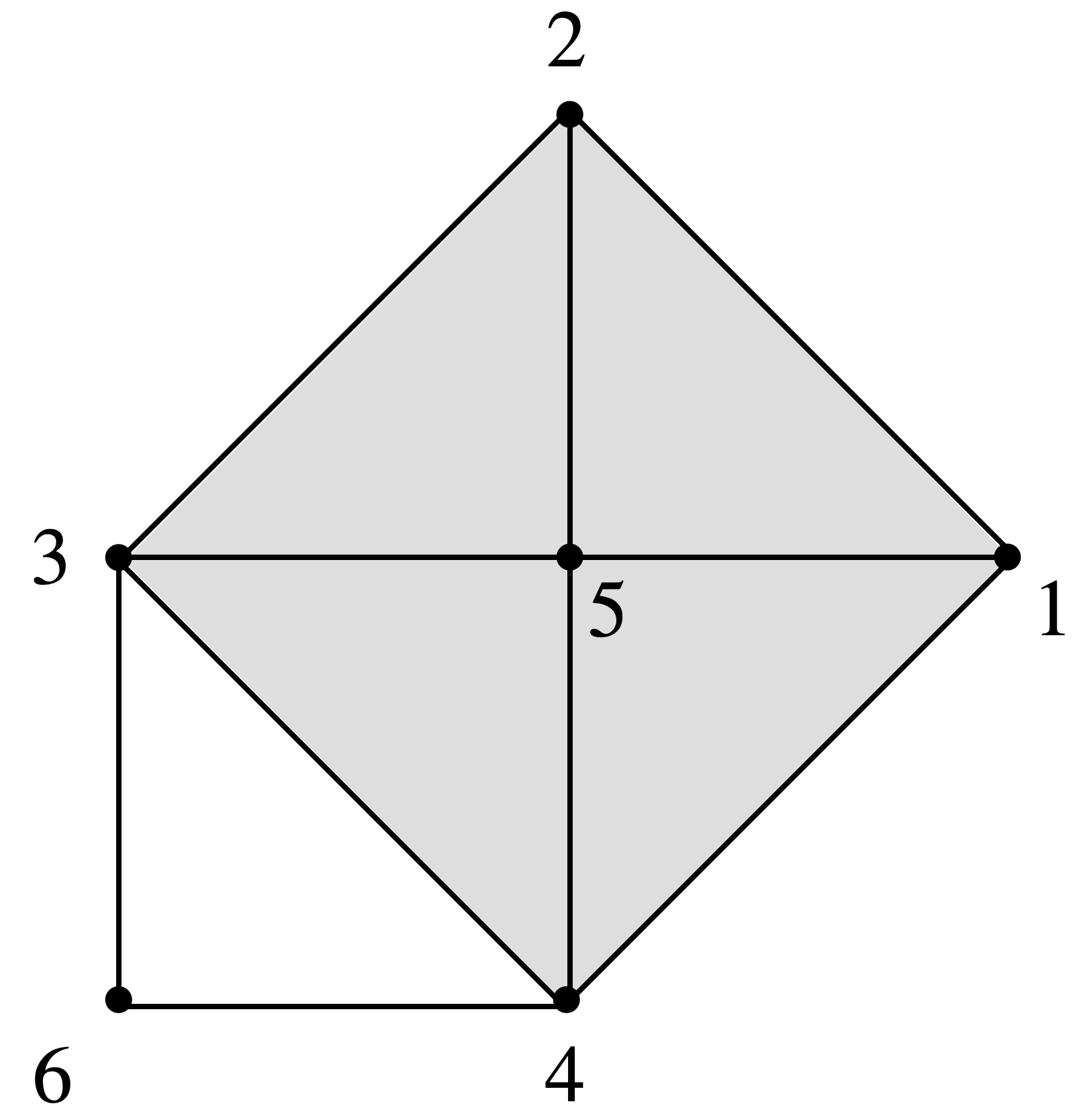}
        \caption{Three simplicial fans.}
        \label{fig:fansSec2}
    \end{figure}
\end{example}

\begin{example} \label{ex:noncomplete}
    We consider the fan $\Sigma$ in $\mathbb{R}^3$ obtained by taking the cone over the right part of Figure \ref{fig:fansSec2}. It has four three-dimensional cones, ten two-dimensional cones and six rays: 
    \[ U \, = \, \begin{pmatrix}
        e_1 + e_2 & e_1 + e_3 & e_1 - e_2 & e_1 - e_3 & e_1 & e_1 - e_2 - e_3
    \end{pmatrix}^t \, \, \in \mathbb{R}^{6 \times 3}. \]
    The universal adjoint ${\rm Adj}_\Sigma = {\rm Adj}_{\Sigma,U}$ is a cubic polynomial in six variables with four terms: 
    \begin{equation} \label{eq:AdjIncomplete} {\rm Adj}_\Sigma \, = \, x_3x_4x_6 + x_1x_4x_6 + x_1x_2x_6 + x_2x_3x_6.\end{equation}
    Restricting to $x_6 = 0$ gives the zero polynomial. This agrees with the fact that the star fan $\Sigma_{\rho_6}$ has no full-dimensional cones (see Example \ref{ex:simpleex}). Setting $x_5 = 0$, we obtain 
    \[ ({\rm Adj}_\Sigma)_{|x_5 = 0} \, = \, {\rm Adj}_\Sigma \, = \, x_6 \cdot (x_3x_4 + x_1x_4 + x_1x_2 + x_2x_3).\]
    The factor $x_6$ is explained by $\Sigma(1) \setminus {\rm nb}(\rho_5) = \{\rho_5, \rho_6\}$, and the quadratic factor is the universal adjoint of the star fan of $\Sigma$ at $\rho_5$, which is the fan in the left part of Figure \ref{fig:fansSec2}.
\end{example}
Lef $f \in \mathbb{C}(x_\rho  : \rho \in \Sigma(1))$ be a rational function with poles of order at most one along the coordinate hyperplanes. We define the \emph{residue} of $f$ along $\Lambda_\tau$ as
\begin{equation} \label{eq:residue} {\rm res}_{\Lambda_\tau} f \, = \, \Big ( \Big (\prod_{\rho \in \tau(1)} x_\rho \Big ) \cdot f \Big )_{|\Lambda_\tau} \, \in \, \mathbb{C}(x_\rho \, : \, \rho \in \Sigma(1) \setminus \tau(1)).\end{equation}
Lemma \ref{lem:restrict} has the following easy consequence for the residues of the toric amplitude.
\begin{corollary} \label{cor:resamp}
    The residue of the toric amplitude ${\rm Amp}_{\Sigma,U}(x)$ along $\Lambda_\tau$ equals 
    \[ {\rm res}_{\Lambda_\tau} \, {\rm Amp}_{\Sigma,U} \, = \, c_\tau^{-1} \cdot {\rm Amp}_{\Sigma_\tau,U_\tau}. \]
\end{corollary}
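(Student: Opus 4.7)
The plan is to derive Corollary \ref{cor:resamp} directly from Lemma \ref{lem:restrict} and the elementary identity ${\rm Amp}_{\Sigma,U} = {\rm Adj}_{\Sigma,U}/\prod_{\rho \in \Sigma(1)} x_\rho$ that defines the universal adjoint. Since Lemma \ref{lem:restrict} already describes what happens to ${\rm Adj}_{\Sigma,U}$ when one sets $x_\rho = 0$ for all $\rho \in \tau(1)$, the work reduces to tracking the monomial denominators on each side of \eqref{eq:residue}.

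First I would rewrite the quantity inside the restriction in \eqref{eq:residue} as
\[
\Big(\prod_{\rho \in \tau(1)} x_\rho\Big) \cdot {\rm Amp}_{\Sigma,U} \, = \, \frac{{\rm Adj}_{\Sigma,U}}{\prod_{\rho \in \Sigma(1) \setminus \tau(1)} x_\rho}.
\]
The denominator on the right-hand side involves none of the variables $x_\rho$ with $\rho \in \tau(1)$, so the restriction to $\Lambda_\tau$ is well-defined as an element of $\mathbb{C}(x_\rho : \rho \in \Sigma(1) \setminus \tau(1))$ and simply replaces the numerator by its restriction.

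Next I would apply Lemma \ref{lem:restrict} to that numerator, yielding
\[
\Big( \prod_{\rho \in \tau(1)} x_\rho \cdot {\rm Amp}_{\Sigma,U} \Big)_{|\Lambda_\tau} \, = \, \frac{c_\tau^{-1} \cdot \Big(\prod_{\rho \notin {\rm nb}(\tau),\, \rho \notin \tau(1)} x_\rho\Big) \cdot {\rm Adj}_{\Sigma_\tau, U_\tau}}{\prod_{\rho \in \Sigma(1) \setminus \tau(1)} x_\rho}.
\]
The main observation is the disjoint union $\Sigma(1) \setminus \tau(1) = {\rm nb}(\tau) \sqcup \bigl(\Sigma(1) \setminus ({\rm nb}(\tau) \cup \tau(1))\bigr)$, which lets the second factor in the numerator cancel exactly the corresponding subproduct in the denominator, leaving $\prod_{\rho \in {\rm nb}(\tau)} x_\rho$ in the denominator.

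The final step is to recognize that, under the bijection $\rho \mapsto \overline{\rho}$ between ${\rm nb}(\tau)$ and $\Sigma_\tau(1)$ that identifies $R_{\Sigma_\tau}$ with a subring of $R_\Sigma$, the resulting expression is precisely $c_\tau^{-1} \cdot {\rm Adj}_{\Sigma_\tau, U_\tau} / \prod_{\overline{\rho} \in \Sigma_\tau(1)} x_{\overline{\rho}} = c_\tau^{-1} \cdot {\rm Amp}_{\Sigma_\tau, U_\tau}$. There is no real obstacle here; the only thing to be careful about is the bookkeeping of the three subsets ${\rm nb}(\tau)$, $\tau(1)$, and their complement in $\Sigma(1)$, which I have already laid out above.
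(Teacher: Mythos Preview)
Your argument is correct and is exactly the approach the paper intends: it states the corollary as an ``easy consequence'' of Lemma \ref{lem:restrict} without writing out a proof, and your bookkeeping with the partition $\Sigma(1)\setminus\tau(1) = {\rm nb}(\tau)\sqcup(\Sigma(1)\setminus({\rm nb}(\tau)\cup\tau(1)))$ is precisely what that easy consequence amounts to.
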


We rephrase Lemma \ref{lem:restrict} in the important case where $\Sigma = \Sigma_P$ is the normal fan of a simple polytope $P$. The $x$-variables are now indexed by facets $Q$ of $P$. Let $\Delta \subseteq P$ be a face of codimension $k$ and let $T_\Delta \in \mathbb{R}^{d \times d}$ be such that the rows of $U \cdot T_{\Delta}$ indexed by the facets of $P$ containing ${\Delta}$ are $e_1, \ldots, e_k$. The matrix $U_{\Delta}$ consists of the last $d-k$ columns of $U \cdot T_{\Delta}$ and the rows indexed by facets $Q$ for which $Q \cap \Delta$ is a facet of ${\Delta}$. With this definition, the rows of $U_{\Delta}$ generate the rays of $\Sigma_{\Delta}$, the normal fan of ${\Delta}$ in the coordinates defined by $T_{\Delta}$. We set $c_{\Delta} = |\det (T_{\Delta})|$. We shall restrict to the linear space 
\begin{equation} \label{eq:LambdaDelta} \Lambda_{\Delta} \, = \, V(x_Q \, : \, {\Delta} \subseteq Q) = \{ x \in \mathbb{P}^{n-1}\, : \, x_Q = 0 \text{ for all } Q \supseteq {\Delta} \}\end{equation}
and define a residue in analogy with \eqref{eq:residue} as follows: ${\rm res}_{\Lambda_{\Delta}} \,  f = ((\prod_{ Q \supseteq {\Delta}} x_Q) \cdot f)_{|\Lambda_\Delta}$. 

\begin{lemma} \label{lem:restrictpolytope}
    The restriction of ${\rm Adj}_{P,U}(x)$ to the coordinate subspace $\Lambda_{\Delta}$ equals 
    \[ ({\rm Adj}_{P,U})_{|\Lambda_{\Delta}} \, = \, c_{\Delta}^{-1} \cdot \Big ( \prod_{Q \cap {\Delta} = \emptyset } x_{Q} \Big ) \cdot {\rm Adj}_{{\Delta},U_{\Delta}}. \]
    Here ${\rm Adj}_{{\Delta},U_{\Delta}} = {\rm Adj}_{\Sigma_{\Delta},U_{\Delta}}$. Moreover, we have ${\rm res}_{\Lambda_{\Delta}} \, {\rm Amp}_{P,U} \, = \, c_{\Delta}^{-1} \cdot {\rm Amp}_{{\Delta},U_{\Delta}}$.
\end{lemma}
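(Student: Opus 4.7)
The plan is to deduce this from Lemma~\ref{lem:restrict} via normal fan duality. First I would identify the codimension-$k$ face $\Delta \subseteq P$ with its dual cone $\tau_\Delta \in \Sigma_P(k)$, which is generated by the inner facet normals $u_\rho$ with $\rho$ ranging over facets $Q$ containing $\Delta$. Under this identification, $\tau_\Delta(1) = \{Q \in \Sigma_P(1) : \Delta \subseteq Q\}$, so the linear space $\Lambda_{\tau_\Delta}$ from \eqref{eq:Lambdatau} coincides with $\Lambda_\Delta$ from \eqref{eq:LambdaDelta}. Because $P$ is simple, the $(k+1)$-dimensional cones of $\Sigma_P$ containing $\tau_\Delta$ are in bijection with the facets of the $(d-k)$-polytope $\Delta$. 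It follows that ${\rm nb}(\tau_\Delta) = \{Q : Q \cap \Delta \text{ is a facet of } \Delta\}$, and the rays of $\Sigma_P$ not in $\tau_\Delta(1) \cup {\rm nb}(\tau_\Delta)$ are exactly the facets $Q$ of $P$ with $Q \cap \Delta = \emptyset$, matching the index set of the product in the statement.

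Next I would verify that the star fan $\Sigma_{\tau_\Delta}$ agrees with the normal fan $\Sigma_\Delta$ of $\Delta$ viewed as a polytope in $\mathbb{R}^d / {\rm span}_\mathbb{R}(\tau_\Delta) \simeq \mathbb{R}^{d-k}$, and that the prescription defining $U_{\tau_\Delta}$ from $U$ and $T_{\tau_\Delta}$ in Lemma~\ref{lem:restrict} is exactly the polytope-side prescription defining $U_\Delta$ from $U$ and $T_\Delta$. With these dictionaries in place, $c_\Delta = c_{\tau_\Delta}$, and the first displayed formula is Lemma~\ref{lem:restrict} applied to $\tau = \tau_\Delta$. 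The residue identity for the amplitude then follows immediately from Corollary~\ref{cor:resamp} under the same dictionary.

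The main obstacle is purely bookkeeping: one must verify three bijections---facets of $P$ containing $\Delta$ corresponding to rays of $\tau_\Delta$; facets of $\Delta$ corresponding to rays of $\Sigma_{\tau_\Delta}$; and the star fan corresponding to the normal fan of $\Delta$. Each of these is a standard consequence of normal-fan duality for simple polytopes, so no substantively new argument is required beyond carefully setting up the translation.
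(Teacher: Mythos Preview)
Your proposal is correct and matches the paper's own approach: the paper explicitly introduces Lemma~\ref{lem:restrictpolytope} as a rephrasing of Lemma~\ref{lem:restrict} in the polytope setting, with the dictionary $\tau_\Delta \leftrightarrow \Delta$, $T_{\tau_\Delta} \leftrightarrow T_\Delta$, ${\rm nb}(\tau_\Delta) \leftrightarrow \{Q : Q \cap \Delta \text{ is a facet of } \Delta\}$ set up in the paragraph preceding the statement, and no separate proof is given. You have simply made that translation explicit.
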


\begin{example}
    Let $P \subset \mathbb{R}^3$ be the ABHY associahedron from Section \ref{subsec:assoc}. Let $\Delta$ be the facet corresponding to the variable $x_{13}$. That is, $\Delta$ is a pentagon. We find that 
    \[ {\rm res}_{\Lambda_\Delta} \, {\rm Amp}_P \, = \, ( x_{13} \cdot {\rm Amp}_P )_{|x_{13} = 0} \,= \, \frac{1}{x_{14}x_{46}} + \frac{1}{x_{14}x_{15}} + \frac{1}{x_{15}x_{35}} + \frac{1}{x_{35}x_{36}} + \frac{1}{x_{36}x_{46}},  \]
    which is the pentagonal amplitude from \eqref{eq:amppentagon}. The surviving terms correspond to the five triangulations of the hexagon which use the diagonal $(1,3)$. For the facet of $x_{14}$ we have 
    \[ ( x_{14} \cdot {\rm Amp}_P )_{|x_{14} = 0} \,= \,  \frac{1}{x_{13}x_{46}} + \frac{1}{x_{13}x_{15}} + \frac{1}{x_{15}x_{24}} + \frac{1}{x_{24}x_{46}} \, = \, \Big ( \frac{1}{x_{13}} + \frac{1}{x_{24}} \Big) \Big(  \frac{1}{x_{15}} + \frac{1}{x_{46}} \Big). \]
    This is the amplitude of a degenerate quadrilateral; it factors because of Lemma \ref{lem:factorfans}.
\end{example}

We can use Lemma \ref{lem:restrict} to show that, if $\Sigma$ is complete, then the polynomial ${\rm Adj}_{\Sigma,U}(x)$ vanishes at any point in the column span of $U$. In the language of projective geometry, the hypersurface ${\cal A}_{\Sigma,U} = \{ x \in \mathbb{P}^{n-1} \, : \, {\rm Adj}_{\Sigma,U}(x) = 0 \}$ contains the $(d-1)$-dimensional linear space spanned by the $d$ columns of $U$. We denote this linear space by $\mathbb{P}({\rm im}(U)) \subset \mathbb{P}^{n-1}$.

\begin{theorem} \label{thm:containsU}
    If $\Sigma$ is a complete fan in $\mathbb{R}^d$, then the universal adjoint ${\rm Adj}_{\Sigma,U}$ vanishes on the $(d-1)$-dimensional linear space $\mathbb{P}({\rm im}(U)) \subset \mathbb{P}^{n-1}$. In particular, if $\Sigma = \Sigma_P$ for a $d$-dimensional simple polytope $P \subset \mathbb{R}^d$, then we have $\mathbb{P}({\rm im}(U)) \subseteq {\cal A}_P$.
\end{theorem}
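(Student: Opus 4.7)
The plan is to prove the theorem by induction on $d$. The base case $d=1$ is immediate: a complete simplicial fan in $\mathbb{R}$ has exactly two rays with generators $a>0$ and $-b<0$ (after positive rescaling), giving ${\rm Adj}_{\Sigma,U} = b\, x_1 + a\, x_2$, which vanishes at the unique point $(a:-b) \in \mathbb{P}({\rm im}(U)) \subset \mathbb{P}^1$.

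For the inductive step I would work with the amplitude rather than the adjoint and show that the rational function $g(y) := {\rm Amp}_{\Sigma,U}(Uy)$ is identically zero; clearing denominators then gives the desired vanishing of ${\rm Adj}_{\Sigma,U}(Uy)$. Two features of $g$ drive the argument: it is homogeneous of degree $-d$ under $y \mapsto \lambda y$, and each summand $|\det U_\sigma|/\prod_{\rho \in \sigma(1)}(u_\rho \cdot y)$ contributes only simple poles, confined to the finite union $\bigcup_{\rho \in \Sigma(1)} \{u_\rho \cdot y = 0\}$. The heart of the proof is then to show that the residue of $g$ along each such hyperplane vanishes. Fixing $\rho_0$ and parameterizing $y = y_0 + t\, e$ with $u_{\rho_0} \cdot e = 1$, so that the hyperplane becomes $\{t = 0\}$, only cones $\sigma \in \Sigma(d)$ with $\rho_0 \in \sigma(1)$ contribute to the residue, together with cones having $-\rho_0 \in \sigma(1)$ if $-\rho_0$ also lies in $\Sigma(1)$ (since then $u_{-\rho_0}\cdot y = -t$ is the only other linear form proportional to $t$). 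By the computation behind Corollary \ref{cor:resamp}, translated into $y$-coordinates, the contribution from cones through $\rho_0$ equals $c_{\rho_0}^{-1}\cdot {\rm Amp}_{\Sigma_{\rho_0},U_{\rho_0}}(U_{\rho_0}\bar y_0)$, where $\bar y_0$ is the image of $y_0$ in $\mathbb{R}^d/\mathrm{span}_\mathbb{R}(\rho_0) \simeq \mathbb{R}^{d-1}$. Since completeness of $\Sigma$ implies completeness of the star fan $\Sigma_{\rho_0}$ in $\mathbb{R}^{d-1}$, the inductive hypothesis forces this expression to be zero; the same argument applied to $-\rho_0$ kills the remaining contribution. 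Hence $g$ has only removable singularities, so it is a polynomial, but being homogeneous of degree $-d < 0$ it must be identically zero. The statement for $\Sigma = \Sigma_P$ is then an immediate specialization.

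The main subtlety I foresee is the careful handling of opposite-ray pairs, since $\rho_0$ and $-\rho_0$ define the same pole hyperplane and jointly contribute to the residue. This phenomenon is why the simpler approach of showing that each linear form $u_\rho \cdot y$ divides ${\rm Adj}_{\Sigma,U}(Uy)$ and then concluding by a degree count against $n - d$ is insufficient: for fans with antipodal rays (such as the normal fans of the square or the pentagon) the degree bound becomes tight or fails, even though the theorem still holds. The residue framework sidesteps this because the two star fans $\Sigma_{\pm\rho_0}$ handle their contributions separately by induction. A standard auxiliary fact needed is that star fans of complete fans are complete, which follows by observing that any lift of a point in $\mathbb{R}^d/\mathrm{span}_\mathbb{R}(\rho_0)$ eventually enters a $d$-cone of $\Sigma$ whose recession contains the direction $u_{\rho_0}$, once translated far enough in that direction.
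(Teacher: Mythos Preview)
Your proof is correct and shares the same inductive skeleton as the paper's argument, but the inductive step is carried out differently. The paper restricts the polynomial ${\rm Adj}_{\Sigma,U}(Uy)$ to each hyperplane $\{u_\rho\cdot y=0\}$ via Lemma~\ref{lem:restrict}, invokes the induction hypothesis to see it vanishes there, and then argues that a degree-$(n-d)$ polynomial vanishing on $n$ distinct hyperplanes of $\mathbb{P}^{d-1}$ must be identically zero; when antipodal rays collapse some of these hyperplanes, the paper falls back on a perturbation-and-continuity argument. You instead work with the rational function ${\rm Amp}_{\Sigma,U}(Uy)$, compute the residue along each pole hyperplane (the same restriction computation, now read through Corollary~\ref{cor:resamp}), and conclude from the vanishing of all residues that the function is an honest polynomial, hence zero by negative homogeneity. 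The payoff of your route is that antipodal rays are absorbed cleanly---each of the two star fans contributes a separately vanishing residue---so no limiting family is needed; the paper's route has the virtue of staying entirely within polynomial algebra and a transparent degree count. Your closing remark correctly identifies why the naive divisibility argument can fail in the antipodal case, which is exactly the reason the paper introduces the perturbation.
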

\begin{proof}
    We prove this by induction on the dimension $d$. For $d = 1$, $\Sigma$ is the normal fan of a line segment with ray matrix $U = \begin{pmatrix}
        a & -b
    \end{pmatrix}^t$ for some positive real numbers $a, b$. The adjoint is ${\rm Adj}_{\Sigma,U} = a \, x_2 + b \, x_1$, which vanishes on $\mathbb{P}({\rm im}(U))$. Suppose that the theorem holds in dimension $d-1$. Fix any ray $\rho \in \Sigma(1)$. As above, we let $T_\rho$ be an invertible matrix such that the row of $U \cdot T_\rho$ labeled by $\rho$ is $e_1$. By Lemma \ref{lem:restrict} and the induction hypothesis, ${\rm Adj}_{\Sigma,U}$ vanishes on the $(d-2)$-dimensional linear subspace of $\mathbb{P}({\rm im}(U))$ spanned by the last $d-1$ columns of $U \cdot T_\rho$, and this is true for each $\rho \in \Sigma(1)$. If no two rays of $\Sigma$ have the same $\mathbb{R}$-span, then this implies that the restriction of ${\rm Adj}_{\Sigma,U}$ to $\mathbb{P}({\rm im}(U))$ vanishes on $n$ different hyperplanes in $\mathbb{P}({\rm im}(U))$. However, ${\rm Adj}_{\Sigma,U}$ has degree $n-d<n$, which implies that ${\rm Adj}_{\Sigma,U}$ has to vanish identically on $\mathbb{P}({\rm im}(U))$. If two rays are linearly dependent, then we can construct a family of fans $\Sigma_\epsilon$ with ray matrices $U_\epsilon $ such that $\lim_{\epsilon \to 0} U_\epsilon = U$, $\Sigma_\epsilon$ has pairwise independent rays and $\Sigma_\epsilon$ has the same combinatorial type as $\Sigma$ for $\epsilon \neq 0$. We have shown that ${\rm Adj}_{\Sigma_\epsilon, U_\epsilon}$ vanishes on ${\rm im}(U_\epsilon)$ for $\epsilon \neq 0$. By continuity, ${\rm Adj}_{\Sigma,U}$ vanishes on $\mathbb{P}({\rm im}(U))$. 
\end{proof}

\begin{example}
    The following fan $\Sigma$ in $\mathbb{R}^3$ is taken from \cite[page 71]{fulton1993introduction}. It is smooth and complete but not projective, i.e., it is not the normal fan of a polytope. The ray matrix is 
    \[ U \, = \,  \begin{pmatrix}
        -e_1 & -e_2 & -e_3 & e_1+e_2+e_3 & e_1 + e_2 & e_2 + e_3 & e_1 + e_3
    \end{pmatrix}^t \, \in \, \mathbb{R}^{7 \times 3} . \]
    There are ten cones in $\Sigma(3)$: 467, 457, 456, 357, 267, 237, 156, 135, 126, and 123. The adjoint 
    \[{\rm Adj}_\Sigma  \, = \, x_1x_2x_3x_5 + x_1x_2x_3x_6 + x_1x_2x_3x_7 + \,  \ldots \, + x_2x_4x_6x_7 + x_3x_4x_5x_7 + x_4x_5x_6x_7\]
    is checked to vanish identically on the plane $\mathbb{P}({\rm im}(U)) \subset \mathbb{P}^6$. See the code available at \cite{mathrepo}.
\end{example}

\section{Dual volumes} \label{sec:dualvolume}

For $U \in \mathbb{R}^{n \times d}$, consider the polyhedron $P_x$ with the following facet representation: 
\begin{equation} \label{eq:Px} P_x \, = \, \{ y \in \mathbb{R}^d \, : \, U\, y + x \, \geq \, 0 \}, \quad x \in \mathbb{R}^{n}. \end{equation}
Here $U \, y + x \geq 0$ means $u_\rho \cdot y + x_\rho \geq 0$ for each row $u_\rho$ of $U$ and corresponding entry $x_\rho$ of $x$. 
The normal fan of $P_x$ depends on $x$. Its rays are among the rows of $U$. The different normal fans obtained by varying $x$ are indexed by cones in the \emph{chamber complex} ${\rm Ch}(U)$. That is, ${\rm Ch}(U)$ is a fan in $\mathbb{R}^n$ whose cones $C$ are such that, for each $x \in {\rm relint}(C)$, the polyhedron $P_x$ has the same normal fan. We recall the construction. For $I \subseteq \{1, \ldots, n\}$, let $\tilde{C}_I \subseteq \mathbb{R}^n$ be the cone generated by the standard basis vectors $\{e_i \, : \, i \in I\}$, and let $C_I = \tilde{C}_I + {\rm im}(U)$, where ${\rm im}(U) \simeq \mathbb{R}^d$ is the column span of $U$. For each point $x \in \mathbb{R}^n$, let $C_x$ be the intersection of all cones $C_I$ containing $x$. If $x \notin C_I$ for all $I$, then $C_x = \emptyset$. The chamber complex ${\rm Ch}(U)= \{ C_x \, : \, x \in \mathbb{R}^n\}$ is the set of cones obtained in this manner. It is a fan with lineality space ${\rm im}(U)$.
For each cone $C \in {\rm Ch}(U)$, let $\Sigma_C$ be the normal fan of $P_x$ for $x \in {\rm relint}(C)$ and let $U_C$ be the submatrix of $U$ consisting of the rows labeled by rays of $\Sigma_C$. If $\dim(C) = n$, then $P_x$ is $d$-dimensional and simple for $x \in {\rm int}(C)$, and $\Sigma_C$ is simplicial. 

\begin{example} \label{ex:chambercomplex}
    Consider a pentagon $P \subset \mathbb{R}^2$, each of whose interior angles is greater than $90^\circ$. The rows of $U \in \mathbb{R}^{5 \times 2}$ generate the rays of its normal fan $\Sigma$, see Figure \ref{fig:chambercomplexpentagon} (right). Modulo the two-dimensional subspace ${\rm im}(U)$ generated by the columns of $U$, the chamber complex is a collection of pointed cones in $\mathbb{R}^3$. It is obtained by taking the cone over Figure \ref{fig:chambercomplexpentagon} (left). The pointed fan ${\rm Ch}(U)/{\rm im}(U)$ has eleven three-dimensional cones, twenty two-dimensional cones and ten rays. The rays labeled by $e_1, \ldots, e_5$ are the images of the standard basis vectors under the quotient by ${\rm im}(U)$. The grey polygons inside each cell show the combinatorial type of $P_x$ for $x$ in that cell. As $x$ moves in the chamber complex, the edge lines of $P_x$ are translated in the direction of their normal vectors. For instance, if $x \in \mathbb{R}^5$ is such that $x \, {\rm mod} \,  {\rm im}(U)$ lies in the central pentagonal cell, shaded in blue, then $P_x = \{ y \in \mathbb{R}^2 \, : \, U \, y + x \geq 0 \}$ is a pentagon. As $x$ moves from the central cell, across the cone generated by rays $e_1$ and $e_2$, into the triangular cell adjacent to ray $e_4$, the edge line of $P_x$ corresponding to the fourth row of $U$ is translated in such a way that $P_x$ becomes a quadrilateral. For each $C \in {\rm Ch}(U)$, the cone ${\rm int}(C) \cap \mathbb{R}^5_+$ represents polygons with fixed normal fan containing $0$ in their interior. 
    \begin{figure}
        \centering
        \includegraphics[height = 5.0cm]{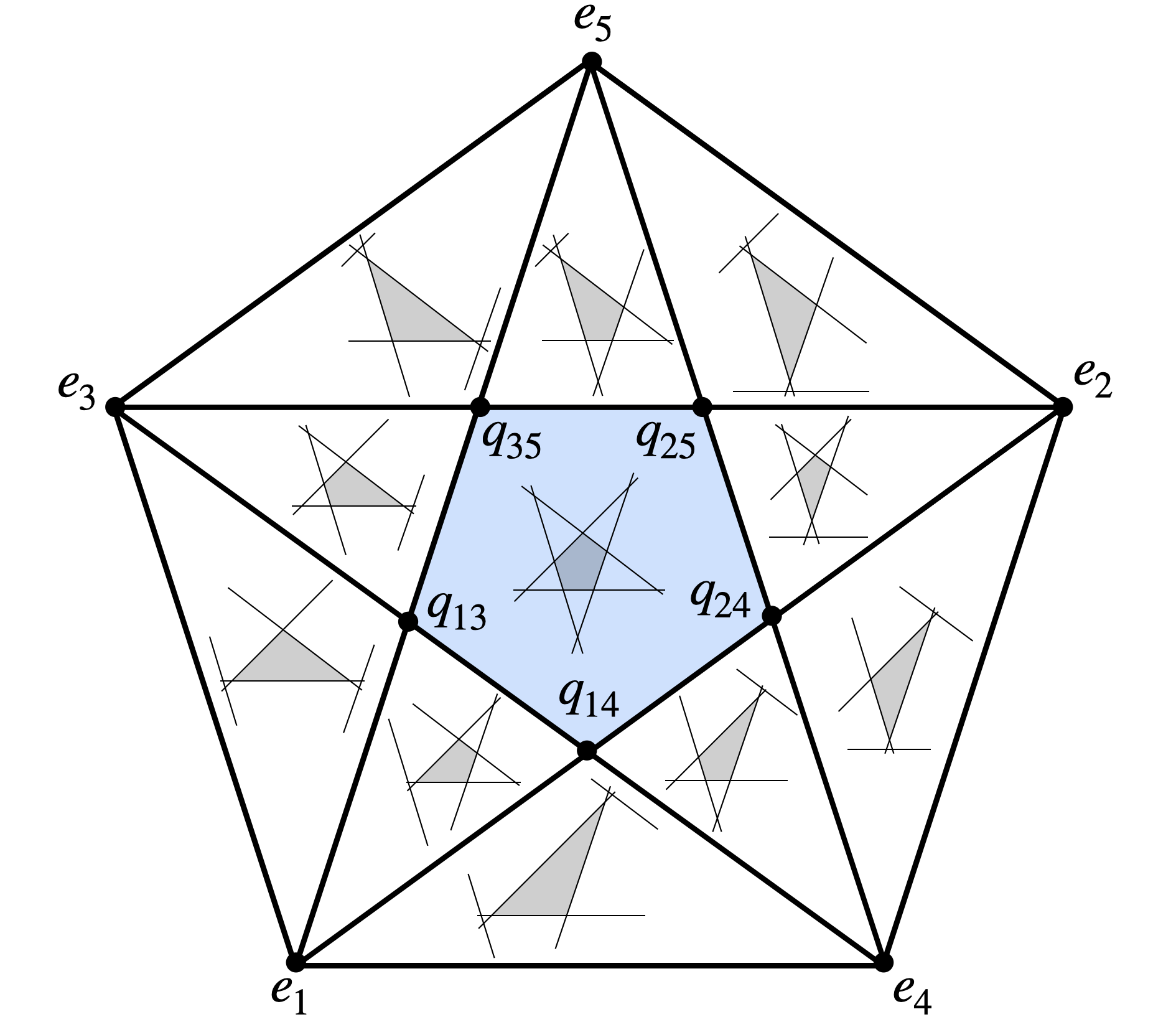}
        \quad \quad 
        \includegraphics[height = 5.0cm]{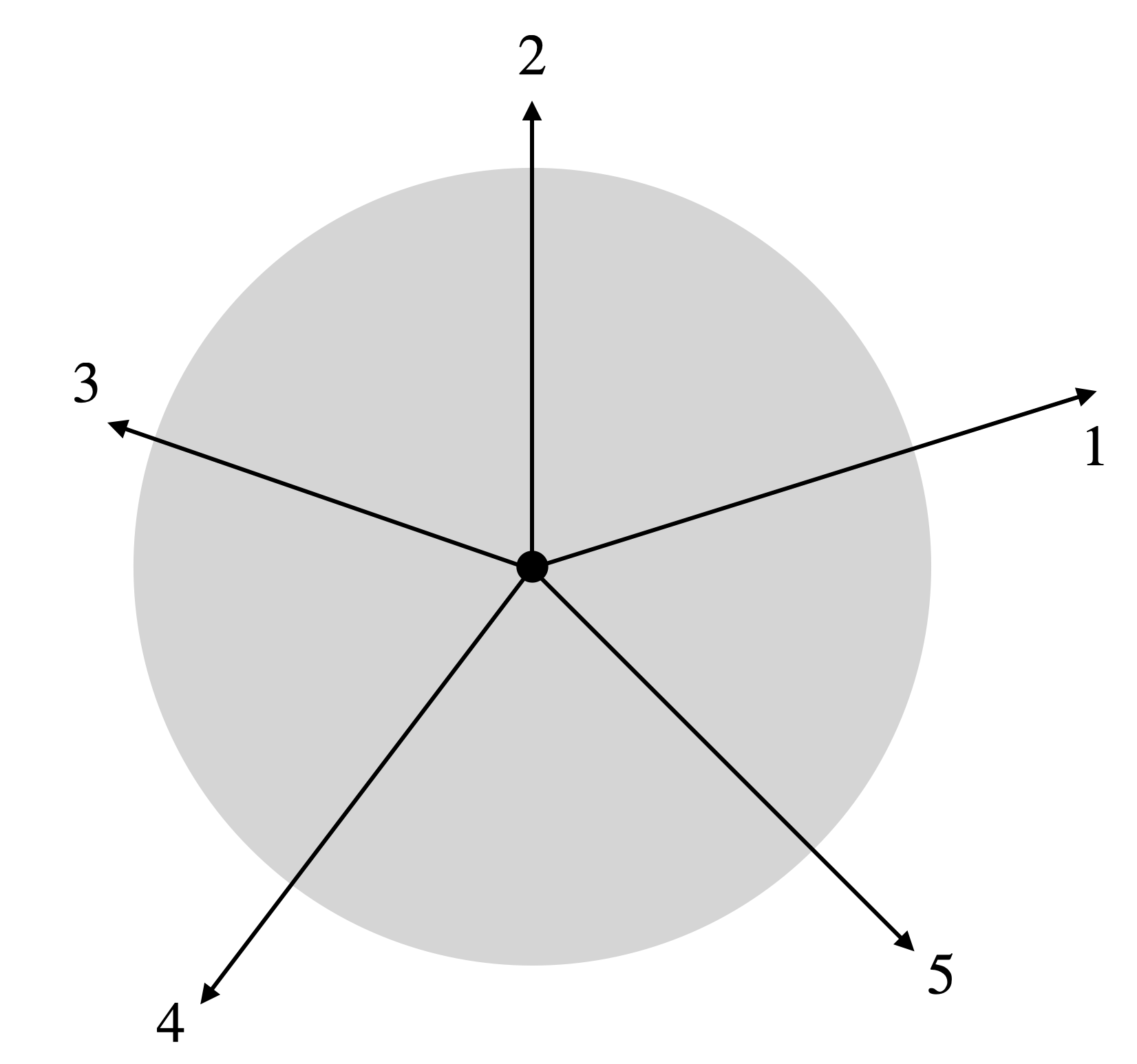}
        \caption{The chamber complex of a pentagon.}
        \label{fig:chambercomplexpentagon}
    \end{figure}
\end{example}

Let ${\rm Vol}(\cdot)$ denote the $d$-dimensional normalized volume. That is, ${\rm Vol}(B)$ is given by 
\[{\rm Vol}(B) \, = \, d! \cdot \int_{B} 1 \, {\rm d} y_1 \cdots {\rm d}y_d\]
for any bounded set $B \subset \mathbb{R}^d$. The volume function $x \mapsto {\rm Vol}(P_x)$ was studied, for instance, in \cite{lasserre1983analytical}. Here, we are interested in the \emph{dual volume function} $x \mapsto {\rm Vol}(P_x^{\, \circ})$ instead, where $x \in \mathbb{R}^n_+$. The polytope $P_x^{\, \circ}$ is the \emph{polar dual} of $P_x$. It lives in the dual vector space $(\mathbb{R}^d)^\vee$:
\[ P_x^{\, \circ} \, = \, \{ u \in (\mathbb{R}^d)^\vee \, : \, u \cdot y \geq -1, \text{ for all } y \in P_x \}. \]
If $P_x$ is simple, then $P_x^{\, \circ}$ is simplicial. Notice that $P_x$ contains the origin $0 \in \mathbb{R}^d$ in its interior if and only if $x \in \mathbb{R}^n_+$. In that case, the dual polytope $P_x^{\, \circ}$ is bounded.  
The relation between the dual volume function and ${\rm Amp}_\Sigma$ can be found in \cite[Corollary 2.6]{pavlov2025santalo} and \cite[Section 2.2]{gao2024dual}.

\begin{lemma} \label{lem:dualvolfromamp}
    Let $C \in {\rm Ch}(U)$ be an $n$-dimensional cone in the chamber complex of $U$. For $x \in C \cap \mathbb{R}^n_+$, we have ${\rm Amp}_{\Sigma_C,U_C}(x) = {\rm Vol}(P_x^{\, \circ})$.
\end{lemma}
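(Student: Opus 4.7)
The plan is to describe the dual polytope $P_x^{\,\circ}$ explicitly in terms of the ray matrix $U_C$ and then compute its normalized volume by an elementary simplicial decomposition with apex at the origin.

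First I would set up the geometric picture. Because $C$ is $n$-dimensional and $x \in C \cap \mathbb{R}^n_+$ lies in the (relative) interior of $C$, the polytope $P_x$ is a $d$-dimensional simple polytope with normal fan $\Sigma_C$, and $x_\rho > 0$ for every ray $\rho \in \Sigma_C(1)$ of that normal fan. The strict positivity of $x_\rho$ forces $0 \in {\rm int}(P_x)$, so $P_x^{\,\circ}$ is a bounded simplicial polytope with $0$ in its interior. Writing each facet inequality as $(-u_\rho/x_\rho)\cdot y \leq 1$ identifies the vertex of $P_x^{\,\circ}$ dual to the facet of $P_x$ indexed by $\rho$ as
\[ v_\rho \, = \, -\frac{u_\rho}{x_\rho} \, \in \, (\mathbb{R}^d)^\vee. \]
By the usual face duality between simple and simplicial polytopes, the facets of $P_x^{\,\circ}$ are in bijection with the vertices of $P_x$, i.e.\ with the full-dimensional cones $\sigma \in \Sigma_C(d)$; the facet associated to $\sigma$ is the $(d-1)$-simplex with vertex set $\{v_\rho : \rho \in \sigma(1)\}$.

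Next I would triangulate $P_x^{\,\circ}$ by coning each of its (simplicial) facets to the origin. Since $0 \in {\rm int}(P_x^{\,\circ})$ and the facets cover $\partial P_x^{\,\circ}$, this gives a simplicial subdivision of $P_x^{\,\circ}$, and hence
\[ {\rm Vol}(P_x^{\,\circ}) \, = \, \sum_{\sigma \in \Sigma_C(d)} {\rm Vol}\bigl({\rm conv}(0, v_\rho : \rho \in \sigma(1))\bigr). \]
The normalized volume of a simplex with one vertex at the origin and remaining vertices given by vectors $w_1, \ldots, w_d$ is $|\det(w_1, \ldots, w_d)|$. Applying this with $w_i = v_{\rho_i} = -u_{\rho_i}/x_{\rho_i}$ and factoring the scalar $-1/x_{\rho_i}$ out of each row of the determinant yields
\[ {\rm Vol}\bigl({\rm conv}(0, v_\rho : \rho \in \sigma(1))\bigr) \, = \, \frac{|\det U_\sigma|}{\prod_{\rho \in \sigma(1)} x_\rho}. \]
Summing over $\sigma \in \Sigma_C(d)$ recovers exactly the definition \eqref{eq:Amp} of ${\rm Amp}_{\Sigma_C, U_C}(x)$, finishing the proof.

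The only delicate point is the triangulation step: one must justify that coning the facets of $P_x^{\,\circ}$ to the interior point $0$ produces a subdivision whose pieces meet only along boundaries, so that volumes add. This is standard for bounded polytopes with an interior basepoint, but it is the one place where the hypotheses $\dim C = n$ (so that $P_x$ is simple, making $P_x^{\,\circ}$ simplicial and the facets actual simplices) and $x \in \mathbb{R}^n_+$ (so that $0$ is genuinely interior) are both used in an essential way.
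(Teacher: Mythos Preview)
Your approach is essentially identical to the paper's: triangulate $P_x^{\,\circ}$ by coning its simplicial facets to the origin and identify each simplex volume with a term of the amplitude. One small slip: $x \in C \cap \mathbb{R}^n_+$ does \emph{not} force $x \in {\rm int}(C)$---points with strictly positive coordinates can still lie on a wall of the chamber complex, where $P_x$ is no longer simple. The paper handles the interior case exactly as you do and then extends to $x \in \partial C \cap \mathbb{R}^n_+$ by continuity of both sides; you should do the same rather than assert interiority.
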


\begin{proof}
    For each $x \in {\rm int}(C)$, the polytope $P_x^{\, \circ}$ is simplicial and given by 
    \begin{equation} \label{eq:decomp} P_x^{\, \circ} \, = \, \bigcup_{\sigma \in \Sigma_C(d)} {\rm Conv} \Big ( \,  \{ 0\} \cup \Big \{ \frac{u_\rho}{x_{\rho}} \, : \, \rho \in \sigma(1) \Big \} \, \Big).\end{equation}
    This is a triangulation of $P_x^{\, \circ}$, and the normalized volumes of these simplices are precisely the terms in ${\rm Amp}_{\Sigma_C, U_C}$. For $x \in C \setminus {\rm int}(C)$, the statement follows by continuity.
\end{proof}

\begin{example} 
    We reconsider Example \ref{ex:pentagonintro}. The dual polygon $P^\circ_x$ for $x = (1,1,1,1,1)$ is shown in the left part of Figure \ref{fig:dualpentagon}. It decomposes into five simplices as in \eqref{eq:decomp}.
    \begin{figure}
        \centering
        \includegraphics[height=4.0cm]{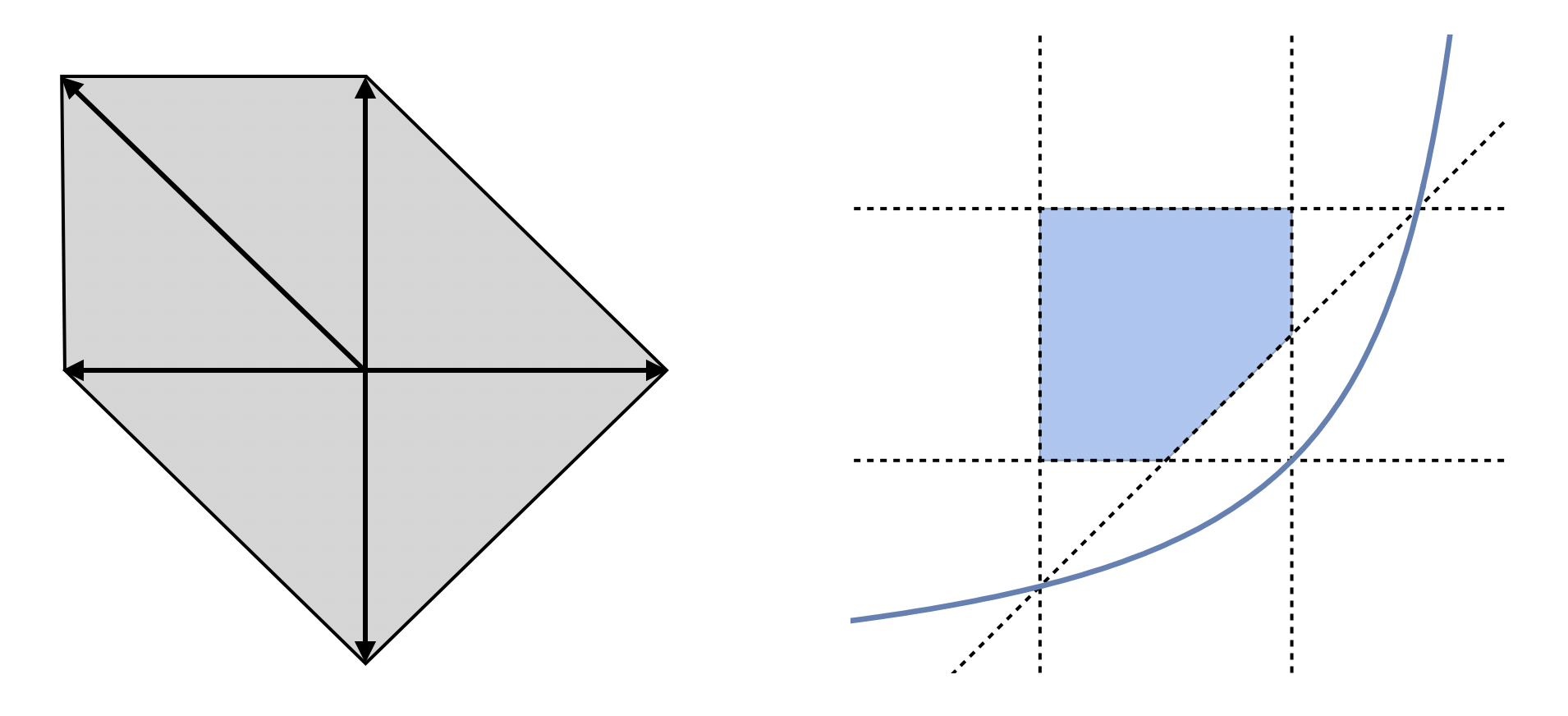}
        \quad \,  
        \includegraphics[height = 4.2cm]{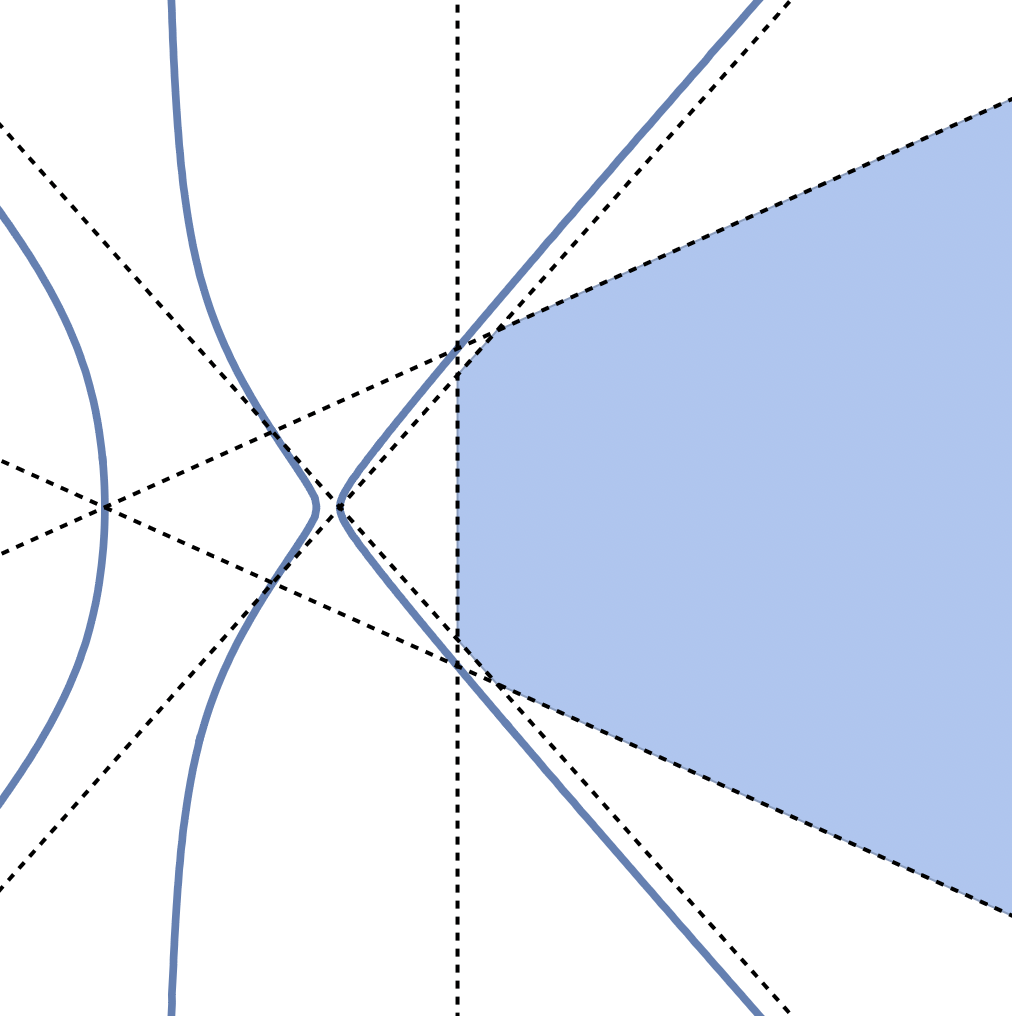}
        \caption{Left: the polar dual polygon of the pentagon in Figure \ref{fig:fanpentagon}. Middle: the adjoint curve of that pentagon. Right: the adjoint curve of an unbounded polyhedron.}
        \label{fig:dualpentagon}
    \end{figure}
\end{example}

 Below, we fix a cone $C \in {\rm Ch}(U)$ of dimension $n$ and set $U = U_C$, $\Sigma = \Sigma_C$.
We explain the name \emph{universal adjoint} for the polynomial ${\rm Adj}_\Sigma(x) = x_1 \cdots x_n \cdot {\rm Amp}_\Sigma(x)$. Adjoints of polygons were studied by Wachspress \cite{Wachspress1975}. They were introduced by Warren \cite{warren1996barycentric} in his construction of barycentric coordinates on polytopes of arbitrary dimension. Adjoints appear naturally when studying dual volumes \cite[Theorem 4.3]{gao2024dual}. Fix $z \in {\rm int}(C)$ and for each $y \in \mathbb{R}^d$, let $P_{z} - y = \{ y' - y \, : \, y' \in P_{z} \}$ be the translated polytope. For $y \in {\rm int}(P_{z})$ we have 
\begin{equation} \label{eq:adj} {\rm Vol}(P_{z}-y)^\circ \, = \, \frac{{\rm adj}_{P_{z}}(y)}{(u_1 \cdot y + z_1)(u_2 \cdot y + z_2) \cdots (u_n \cdot y + z_n)} .\end{equation}
Here ${\rm adj}_{P_{z}}(y)$ is a polynomial, called the \emph{adjoint polynomial} of $P_{z}$. We use this as our definition. 
The rational function \eqref{eq:adj} is the \emph{canonical function} of $P_z$ as a \emph{positive geometry} \cite{arkani2017positive,lam2024invitation}. 
The universal adjoint ${\rm Adj}_\Sigma$ encodes ${\rm adj}_{P_{z}}(y)$ for each $z \in {\rm int}(C)$, in the following~sense. 

\begin{lemma} \label{lem:universaladjoint}
    For any $z \in {\rm int}(C)$, we have  ${\rm Adj}_\Sigma(U\, y+z) = {\rm adj}_{P_{z}}(y)$ in $\mathbb{R}[y_1,\ldots,y_d]$. 
\end{lemma}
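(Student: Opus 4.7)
The plan is to reduce the statement to Lemma \ref{lem:dualvolfromamp} through a translation identity for the parametric polyhedra $P_x$, and then to pass from an equality of real-analytic functions on an open set to an equality of polynomials.

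First I would observe the key translation identity $P_z - y = P_{Uy+z}$. Indeed, if $y' \in P_z$, i.e.\ $Uy' + z \geq 0$, then writing $y'' = y' - y$ we get $Uy'' + (Uy+z) = Uy' + z \geq 0$, so $y'' \in P_{Uy+z}$, and the converse is identical. Passing to polar duals gives $(P_z - y)^\circ = P_{Uy+z}^\circ$ for any $y$ in the interior of $P_z$.

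Next I would check that for $y \in {\rm int}(P_z)$, the slack vector $x := Uy + z$ lies in $C \cap \mathbb{R}^n_{>0}$. Positivity is immediate from $y \in {\rm int}(P_z)$. For membership in $C$, note that $P_x = P_z - y$ is a translate of $P_z$ and therefore has the same normal fan $\Sigma_C = \Sigma$; by the defining property of the chamber complex, this forces $x \in C$. Applying Lemma \ref{lem:dualvolfromamp} with this $x$, I obtain
\[
{\rm Amp}_\Sigma(Uy + z) \, = \, {\rm Vol}(P_{Uy+z}^{\,\circ}) \, = \, {\rm Vol}((P_z - y)^{\,\circ}).
\]
By the defining equation \eqref{eq:adj} of the adjoint polynomial, the right-hand side equals ${\rm adj}_{P_z}(y)/\prod_\rho(u_\rho \cdot y + z_\rho)$. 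Multiplying both sides by $\prod_\rho(u_\rho \cdot y + z_\rho) = \prod_\rho x_\rho$, the left-hand side becomes ${\rm Adj}_\Sigma(Uy+z)$ by definition \eqref{eq:Adj}, yielding
\[
{\rm Adj}_\Sigma(Uy + z) \, = \, {\rm adj}_{P_z}(y) \quad \text{for all } y \in {\rm int}(P_z).
\]
Finally, both sides are polynomials in $y_1, \ldots, y_d$, and ${\rm int}(P_z)$ is a nonempty Euclidean-open set in $\mathbb{R}^d$, so agreement there forces agreement as elements of $\mathbb{R}[y_1, \ldots, y_d]$. I do not foresee any real obstacle; the only subtlety is keeping straight that the chamber-complex hypothesis $x \in C$ is used precisely to guarantee that the normal fan of $P_x$ is the $\Sigma = \Sigma_C$ appearing in the amplitude formula, which the translation identity makes automatic.
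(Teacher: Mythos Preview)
Your proof is correct and follows essentially the same approach as the paper's: both invoke Lemma~\ref{lem:dualvolfromamp} at the slack vector $Uy+z$ to equate ${\rm Amp}_\Sigma(Uy+z)$ with the dual volume ${\rm Vol}((P_z-y)^\circ)$, rewrite the latter via \eqref{eq:adj}, and clear denominators to obtain a polynomial identity on the open set ${\rm int}(P_z)$. You are simply more explicit than the paper about the translation identity $P_z-y=P_{Uy+z}$ and about verifying that $Uy+z$ lies in $C\cap\mathbb{R}^n_{>0}$, which the paper leaves to the reader.
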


\begin{proof}
    As a consequence of Lemma \ref{lem:dualvolfromamp}, for any $z \in {\rm int}(C)$ and $y \in {\rm int}(P_{z})$, we have the equality ${\rm Vol}(P_{z}-y)^{\, \circ} = {\rm Amp}_\Sigma(U\, y+z)$. Therefore, the rational functions ${\rm Amp}_\Sigma(U \, y + z)$ and the righthand side of \eqref{eq:adj} agree on the $d$-dimensional open set ${\rm int}(P_z)$. Hence, they are equal as rational functions. The lemma follows by multiplying with $\prod_{i=1}^n (u_i \cdot y + z_i)$. 
\end{proof}

\begin{proposition} \label{prop:degadj}
    If $\Sigma$ is complete, i.e., if $P_z$ is bounded for all $z \in {\rm int}(C)$, then the adjoint polynomial ${\rm adj}_{P_z}(y)$ has degree at most $n-d-1$. 
\end{proposition}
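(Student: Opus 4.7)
The plan is to combine Lemma \ref{lem:universaladjoint} with Theorem \ref{thm:containsU}. By Lemma \ref{lem:universaladjoint}, we have the polynomial identity ${\rm adj}_{P_z}(y) = {\rm Adj}_\Sigma(U\, y + z)$ in $\mathbb{R}[y_1, \ldots, y_d]$. Since ${\rm Adj}_\Sigma$ is homogeneous of degree $n-d$ in the variables $x_\rho$, and each $x_\rho = u_\rho \cdot y + z_\rho$ is an affine function of $y$ (linear part $u_\rho \cdot y$, constant part $z_\rho$), the substituted polynomial ${\rm Adj}_\Sigma(U\, y + z)$ has total degree at most $n-d$ in $y$.

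The key observation is that the homogeneous component of degree exactly $n-d$ in $y$ is obtained by retaining only the linear parts of each $x_\rho$, i.e., it equals ${\rm Adj}_\Sigma(U\, y)$, with $z$ set to zero. So it suffices to show that ${\rm Adj}_\Sigma(U\, y) \equiv 0$ as a polynomial in $y$. But this is exactly the conclusion of Theorem \ref{thm:containsU}: since $\Sigma$ is complete, ${\rm Adj}_\Sigma$ vanishes identically on the linear subspace $\mathbb{P}({\rm im}(U)) \subset \mathbb{P}^{n-1}$, and every vector of the form $U\, y$ lies in ${\rm im}(U)$. Hence the top-degree component of ${\rm adj}_{P_z}(y)$ vanishes, and $\deg {\rm adj}_{P_z}(y) \leq n-d-1$.

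There is essentially no obstacle here; the proposition is a direct corollary of Theorem \ref{thm:containsU} once Lemma \ref{lem:universaladjoint} is invoked to translate the geometric vanishing statement (vanishing on $\mathbb{P}({\rm im}(U))$) into a vanishing-of-leading-term statement for ${\rm adj}_{P_z}$. The only minor subtlety worth spelling out is why the degree-$(n-d)$ homogeneous part in $y$ of ${\rm Adj}_\Sigma(U\,y+z)$ coincides with ${\rm Adj}_\Sigma(U\,y)$: this follows from the homogeneity of ${\rm Adj}_\Sigma$, since expanding each monomial of ${\rm Adj}_\Sigma$ in the linear forms $u_\rho \cdot y + z_\rho$ and collecting terms of top degree in $y$ amounts to discarding every summand that uses any $z_\rho$.
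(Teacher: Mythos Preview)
Your proof is correct and follows essentially the same approach as the paper: both arguments use Lemma~\ref{lem:universaladjoint} to identify ${\rm adj}_{P_z}(y)$ with ${\rm Adj}_\Sigma(U\,y+z)$ and then invoke Theorem~\ref{thm:containsU} to kill the top-degree part. The paper packages this via a homogenizing variable $y_0$ (so that $y_0 \mid {\rm Adj}_\Sigma(U\,y + z\,y_0)$ and dehomogenizing drops the degree), whereas you extract the degree-$(n-d)$ component directly as ${\rm Adj}_\Sigma(U\,y)$; these are two phrasings of the same observation.
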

\begin{proof}
    The polynomial ${\rm Adj}_{\Sigma}(U \, y + z \, y_0)$ is homogeneous of degree $n-d$ in $y_0, \ldots, y_d$. It vanishes identically on the hyperplane $y_0 = 0$ by Theorem \ref{thm:containsU}. Hence ${\rm Adj}_{\Sigma}(U \, y + z \, y_0)$ has a factor $y_0$. After dehomogenizing by setting $y_0 = 1$ we obtain a polynomial of degree at most $n-d-1$ in $y_1, \ldots, y_d$. By Lemma \ref{lem:universaladjoint}, that polynomial is ${\rm adj}_{P_z}(y)$. 
\end{proof}

\begin{example} \label{ex:geometric}
Lemma \ref{lem:universaladjoint} has a nice geometric interpretation. 
The $(d-1)$-plane $\mathbb{P}({\rm im}(U))$ is contained in the universal adjoint surface ${\cal A}_P$ by Theorem \ref{thm:containsU}. Fix $z \in {\rm int}(C)$ and think of it as a point in $\mathbb{P}^{n-1}$. The $d$-plane $H$ spanned by $\mathbb{P}({\rm im}(U))$ and $z$ intersects the nonnegative points of $\mathbb{P}^{n-1}$, i.e., the points which can be represented by $n$ nonnegative coordinates, in a polytope $P_z$. The intersection of ${\cal A}_P$ with $H$ is the union of $\mathbb{P}({\rm im}(U))$ and a hypersurface of degree $n-d-1$. That hypersurface in $\mathbb{C}^d \simeq H \setminus \mathbb{P}({\rm im}(U))$ is Warren's adjoint hypersurface given by ${\rm adj}_{P_z}(y) = 0$. Figure \ref{fig:adjquadrilateral} illustrates this for a quadrilateral $P \subset \mathbb{R}^2$. This property of ${\cal A}_P$ helps to show that the adjoint curve of a generic polygon is~smooth (Corollary \ref{cor:genericsmoothngon}).
    \begin{figure}
        \centering
        \includegraphics[height = 6.0
        cm]{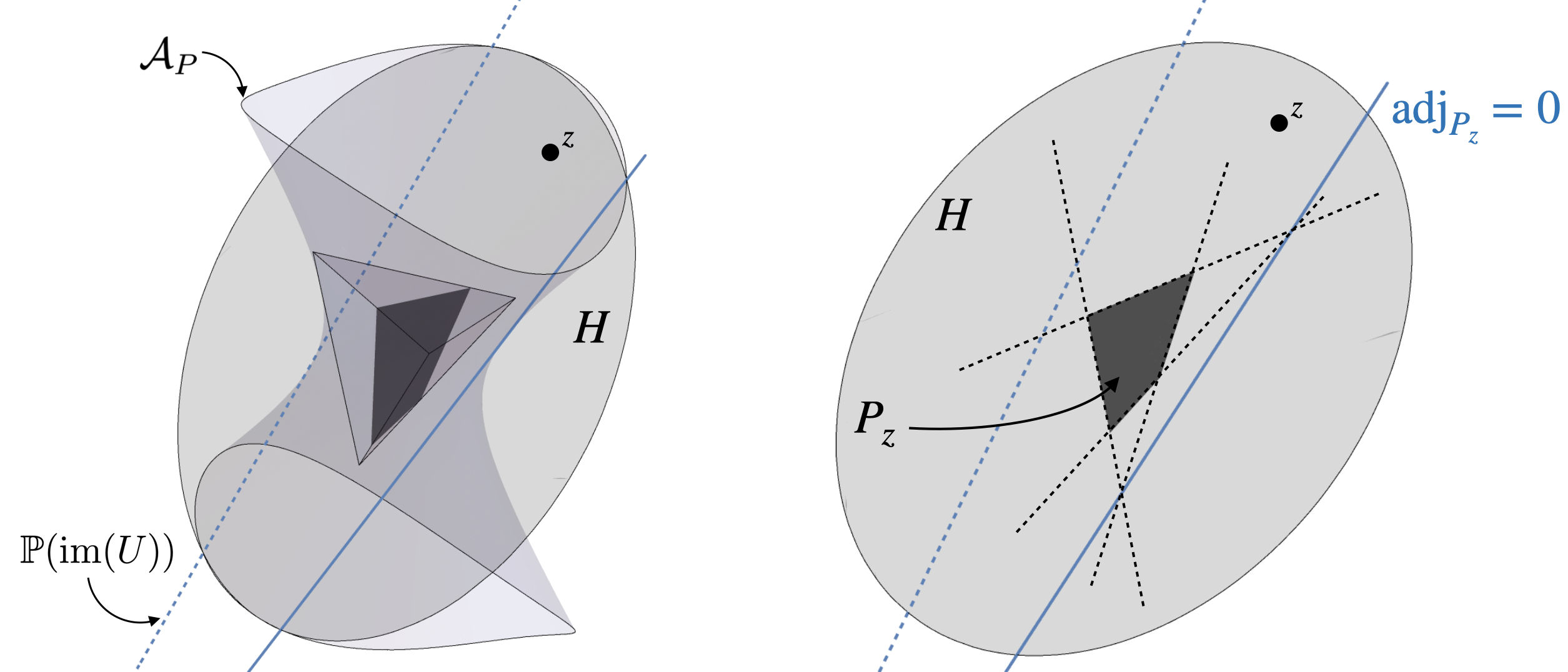}
        \caption{Obtaining the adjoint line of a quadrilateral via its universal adjoint quadric. }
        \label{fig:adjquadrilateral}
    \end{figure}
\end{example}

\begin{example}
    Let $U$ be as in Example \ref{ex:pentagonintro}. The polygon $P_z$ with $z = (1,1,1,1,1)$ is shown in Figure \ref{fig:fanpentagon}. Its adjoint polynomial ${\rm adj}_{P_z}(y) = 5 - 3\, y_1 +3 \, y_2 -y_1y_2$ defines a hyperbola in $\mathbb{R}^2$, as shown in the middle part of Figure \ref{fig:dualpentagon}. This polynomial is obtained by substituting 
    \[ x_1 \, = \, y_1 + 1, \quad x_2 \,= \, y_2 + 1, \quad x_3 \, = \,  -y_1 + y_2 + 1, \quad x_4 \, = \, -y_1 +1, \quad x_5 \, = \, -y_2 + 1\]
    into \eqref{eq:adjpentagon}, see \cite{mathrepo}. The degree drops from three to two, as predicted by Proposition \ref{prop:degadj}.

    The polyhedron shown in Figure \ref{fig:dualpentagon} (right) is unbounded. It is given by the inequalities 
    \[ 2 \, y_1 + 5 \, y_2 + 10 \geq 0, \, \, \,   y_1 + y_2 + 3 \geq 0, \, \, \,   y_1 + 2 \geq 0, \, \, \, y_1-y_2 +3 \geq 0, \, \, \,   2 \, y_1 - 5 \, y_2 +10 \geq 0. \]
    Labeling the rays of the normal fan in that order, the universal adjoint is
    $ {\rm Adj}_\Sigma(x) \, = \, 3 \, x_1x_2x_3 + x_1x_2x_5 + x_1x_4x_5 + 3 \, x_3x_4x_5$.
    Substituting $x_1 = 2 \, y_1 + 5 \, y_2 + 10$ and so on, we obtain the adjoint of $P_z$ with $z = (10,3,2,3,10)$:
    $ {\rm adj}_{P_z}(y) \, = \, 20 \, y_1^3 + 224 \, y_1^2 - 20\, y_1y_2^2 + 812\, y_1 - 90 \, y_2^2 + 960$.
    Since $\Sigma$ is not complete, Proposition \ref{prop:degadj} does not apply: the degree is $n - d = 3$. 
\end{example}

Figures \ref{fig:dualpentagon} and \ref{fig:adjquadrilateral} suggest that the adjoint hypersurface of $P_z$ interacts with its facet hyperplane arrangement in an interesting way. This is made precise by the following fact, proved in \cite[Theorem 1 and Proposition 2]{kohn2020projective}. View $P_z$ as a convex polytope in $\mathbb{RP}^d$ and let ${\cal H}$ be its projective hyperplane arrangement. Define the \emph{residual arrangement} ${\cal R}_{P_z}$ as the union of all flats of ${\cal H}$ which do not intersect $P_z$. The polynomial ${\rm adj}_{P_z}(y)$ vanishes on ${\cal R}_{P_z}$. Furthermore, if the arrangement ${\cal H}$ is simple, meaning that no $d+1$ hyperplanes meet, then ${\rm adj}_{P_z}(y)$ is uniquely determined by these interpolation conditions (up to scaling). In Figure \ref{fig:dualpentagon} (middle) the blue adjoint curve is the unique conic passing through the three residual points seen in the picture and the two residual points at infinity. In Figure \ref{fig:dualpentagon} (right), $P_z$ is a hexagon in $\mathbb{RP}^2$. The residual arrangement ${\cal R}_{P_z}$ consists of nine points, three of which lie at infinity. This leads us to ask which linear spaces are contained in the universal adjoint hypersurface ${\cal A}_\Sigma$, and whether it is similarly determined by interpolation conditions. This is our next topic.

\section{Fano schemes} \label{sec:fano}

Let $X \subset \mathbb{P}^{n-1}$ be a hypersurface. The \emph{Fano scheme of $k$-planes} of $X$ is a subscheme of the Grassmannian ${\rm Gr}(k+1,n)$ of $k$-planes in $\mathbb{P}^{n-1}$ \cite[\S 6.1]{eisenbud20163264}. Its underlying algebraic set is
\[ F_k(X) \, = \, \{ [\Lambda] \in {\rm Gr}(k+1,n) \, : \, \Lambda \subseteq X \}. \]
Here $\Lambda \subseteq \mathbb{P}^{n-1}$ is a linear subspace of dimension $k$, and $[\Lambda]$ is its point in the Grassmannian. We clearly have $F_0(X) = X$ and $F_k(X) = \emptyset$ for $k > n-2$. Moreover, $F_{n-2}(X) = \emptyset$ unless the defining equation of $X$ has a linear factor. We are mainly interested in $1 \leq k \leq n-3$.

We study the Fano schemes of the adjoint hypersurface ${\cal A}_\Sigma$. Except for some small examples, such as those in Sections \ref{subsec:quadrilateral} and \ref{subsec:pentagon}, it is out of reach to compute these schemes explicitly. We limit ourselves to finding points in $F_k({\cal A}_\Sigma)$ from the combinatorics of $\Sigma$. Throughout the section, $U \in \mathbb{R}^{n \times d}$ is any matrix whose rows are ray generators for $\Sigma(1)$. The hypersurface ${\cal A}_\Sigma = {\cal A}_{\Sigma,U}$ depends on $U$, but we drop $U$ from the notation for simplicity. We have already established a point in the Fano scheme of $(d-1)$-planes (Theorem \ref{thm:containsU}). 
\begin{corollary}
    If $\Sigma$ is a complete fan, then we have $[\mathbb{P}({\rm im}(U))] \in F_{d-1}({\cal A}_\Sigma)$. 
\end{corollary}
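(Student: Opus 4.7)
The plan is that this corollary is essentially a direct translation of Theorem \ref{thm:containsU} into the language of Fano schemes, so no genuinely new work is required. First I would check that $\mathbb{P}({\rm im}(U))$ legitimately defines a point of the Grassmannian ${\rm Gr}(d,n)$ parameterizing $F_{d-1}$. This uses the standing assumption ${\rm rank}(U) = d$ from the notation section: the column span ${\rm im}(U) \subseteq \mathbb{R}^n$ has dimension $d$, so after extending scalars to $\mathbb{C}$ it defines a $d$-dimensional linear subspace of $\mathbb{C}^n$, and its projectivization is a $(d-1)$-dimensional linear subspace $\Lambda = \mathbb{P}({\rm im}(U)) \subset \mathbb{P}^{n-1}$.

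Then I would invoke Theorem \ref{thm:containsU}, which under the hypothesis that $\Sigma$ is complete tells us that ${\rm Adj}_{\Sigma,U}$ vanishes identically on $\Lambda$, i.e., $\Lambda \subseteq V({\rm Adj}_{\Sigma,U}) = {\cal A}_\Sigma$. By the very definition of the Fano scheme of $(d-1)$-planes, the containment $\Lambda \subseteq {\cal A}_\Sigma$ means exactly that $[\Lambda] = [\mathbb{P}({\rm im}(U))]$ is a point of $F_{d-1}({\cal A}_\Sigma)$. There is no obstacle here; the only thing worth noting is that the rank assumption on $U$ is what ensures the dimension count works out so that $\Lambda$ lands in the correct Fano scheme rather than a lower-dimensional one.
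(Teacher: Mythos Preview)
Your proposal is correct and matches the paper's approach exactly: the paper states this corollary immediately after recalling Theorem \ref{thm:containsU} with no further proof, treating it as a direct restatement in Fano-scheme language. Your observation that the standing assumption ${\rm rank}(U)=d$ is what guarantees $\mathbb{P}({\rm im}(U))$ has the right dimension is a nice touch that the paper leaves implicit.
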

To a cone $\sigma \in \Sigma$ we associate the monomial $x^{\hat{\sigma}} = \prod_{\rho \notin \sigma(1)} x_\rho$. The \emph{irrelevant ideal} of $\Sigma$ is 
\begin{equation} \label{eq:irrel} B(\Sigma) \, = \, \langle \,  x^{\hat{\sigma}} \, : \, \sigma \in \Sigma \, \rangle \, \, \subseteq \, R_\Sigma. \end{equation}
This monomial ideal is also known as the \emph{Stanley-Reisner ideal} of the Alexander dual of $\Sigma$. A minimal set of generators for $B(\Sigma)$ is given by $x^{\hat{\sigma}}$, where $\sigma$ ranges over the maximal cones of $\Sigma$ with respect to inclusion. The name \emph{irrelevant ideal} is motivated by toric geometry, where $B(\Sigma)$ arises in Cox's GIT construction of the normal toric variety $X_\Sigma$, see \cite{Cox1995} and \cite[\S 5.1]{CoxLittleSchenck2011}. We follow these references in writing $Z(\Sigma) = V(B(\Sigma))$ for the variety defined by $B(\Sigma)$. We think of $Z(\Sigma)$ as a variety in $\mathbb{P}^{n-1}$, which is unnatural in the setting from \cite{Cox1995} but convenient in ours. Let $\Sigma^c$ be the set of subsets of $\Sigma(1)$ which are not contained in a cone of $\Sigma$. We have 
\begin{equation} \label{eq:Z} Z(\Sigma) \, = \, \bigcup_{J \in \Sigma^c} V(x_\rho \, : \, \rho \in J) \, \, \subset \, \mathbb{P}^{n-1}.\end{equation}
This becomes a minimal irreducible decomposition of $Z(\Sigma)$ if $J \in \Sigma^c$ ranges over \emph{primitive collections} in the union \eqref{eq:Z}. These are the minimal elements of $\Sigma^c$ \cite[Definition 5.1.5]{CoxLittleSchenck2011}. 

\begin{example} \label{ex:irrel}
The irrelevant ideal $B(\Sigma)$ for $\Sigma$ from the right part of Figure \ref{fig:fansSec2} is given by 
$B(\Sigma) \, = \, \langle x_3x_4x_6, \,  x_1x_4x_6, \,  x_1x_2x_6,  \, x_2x_3x_6, \, x_1x_2x_3x_5, \,  x_1x_2x_4x_5 \rangle$.
Its variety is 
\begin{equation} \label{eq:ZIncomplete} Z(\Sigma) \, = \, \Lambda_{13} \cup \Lambda_{24} \cup \Lambda_{16} \cup \Lambda_{26} \cup \Lambda_{56} \cup \Lambda_{346}, \end{equation}
where $\Lambda_J = V(x_j \, : \, j \in J)$. The primitive collections of $\Sigma$ are $13, 24, 16, 26, 56$ and $346$.
\end{example}

Example \ref{ex:irrel} introduces convenient notation for coordinate subspaces: for $J \subseteq \Sigma(1)$, we~set 
\begin{equation} \label{eq:LambdaJ} \Lambda_J \, = \, V(x_\rho \, : \, \rho \in J) \, = \, \{ x \in \mathbb{P}^{n-1} \, : \, x_\rho = 0 \text{ for all } \rho \in J \}. \end{equation}
The following statement follows directly from the definitions of ${\rm Adj}_\Sigma$ and $B(\Sigma)$.
\begin{proposition} \label{prop:containedinB}
    The hypersurface ${\cal A}_\Sigma$ contains each coordinate subspace $\Lambda_J$ for $J \in \Sigma^c$. We have $[\Lambda_J] \in F_{n-1-|J|}({\cal A}_\Sigma)$, where $|J|$ denotes the cardinality of $J$, and ${\rm Adj}_\Sigma \in B(\Sigma)$.
\end{proposition}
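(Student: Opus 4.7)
The plan is to observe that all three claims in Proposition~\ref{prop:containedinB} are immediate consequences of the definitions (\ref{eq:Adj}) and (\ref{eq:irrel}), organized so that the third claim drives the first.

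First I would verify that ${\rm Adj}_\Sigma \in B(\Sigma)$ by inspection: every monomial appearing in the expansion (\ref{eq:Adj}) is of the form $x^{\hat\sigma}$ for some $\sigma \in \Sigma(d) \subseteq \Sigma$, and each such monomial is among the generators listed in (\ref{eq:irrel}). Since ${\rm Adj}_\Sigma$ is a nonnegative real combination of these generators, it lies in $B(\Sigma)$.

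For the first claim I would invoke the decomposition (\ref{eq:Z}): if $J \in \Sigma^c$, then $\Lambda_J$ appears as one of the coordinate subspaces in that union, so $\Lambda_J \subseteq Z(\Sigma) = V(B(\Sigma))$. Combined with ${\rm Adj}_\Sigma \in B(\Sigma)$, this forces ${\cal A}_\Sigma = V({\rm Adj}_\Sigma) \supseteq V(B(\Sigma)) \supseteq \Lambda_J$. A direct verification bypassing $Z(\Sigma)$ also works and is instructive: since $J$ is contained in no cone of $\Sigma$, in particular $J \not\subseteq \sigma(1)$ for every $\sigma \in \Sigma(d)$, so for each such $\sigma$ there is some $\rho \in J$ with $\rho \notin \sigma(1)$; then $x_\rho$ divides the monomial $x^{\hat\sigma}$, and consequently every single term of the sum (\ref{eq:Adj}) vanishes on $\Lambda_J$.

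Finally, the Fano scheme membership $[\Lambda_J] \in F_{n-1-|J|}({\cal A}_\Sigma)$ is just a dimension count: the linear space $\Lambda_J = V(x_\rho : \rho \in J)$ is cut out by $|J|$ independent coordinate linear forms in $\mathbb{P}^{n-1}$, hence has projective dimension $n-1-|J|$, and is contained in ${\cal A}_\Sigma$ by the previous paragraph. I do not anticipate any obstacle; the statement is a formal unpacking of the definitions, and its real value lies in packaging these combinatorially-defined linear spaces on ${\cal A}_\Sigma$ for use in the rest of Section~\ref{sec:fano}.
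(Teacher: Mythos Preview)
Your proposal is correct and matches the paper's approach exactly: the paper states only that the proposition ``follows directly from the definitions of ${\rm Adj}_\Sigma$ and $B(\Sigma)$,'' and what you have written is precisely a careful unpacking of that sentence. There is nothing to add.
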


If $\Sigma$ contains a cone which is not a face of a $d$-dimensional cone, we can improve Proposition \ref{prop:containedinB}. Let $\overline{\Sigma} \subseteq \Sigma$ be the subfan of $\Sigma$ consisting of all $d$-dimensional cones $\Sigma(d)$ and their faces. 
Its irrelevant ideal $B(\overline{\Sigma})$ lives in a subring of $R_\Sigma$. It defines a union of coordinate subspaces \[ V(B(\overline{\Sigma})) \, = \, \bigcup_{J \in \overline{\Sigma}^c} \Lambda_J \, \, \subseteq \, \mathbb{P}^{n-1},  \]
where $\overline{\Sigma}^c$ is the set of subsets of $\overline{\Sigma}(1)$ which do not form a cone in $\overline{\Sigma}$. 
\begin{proposition} \label{prop:coordinatehyperplanes}
    The hypersurface ${\cal A}_\Sigma$ contains the following union of coordinate subspaces: 
    \begin{equation} \label{eq:containedinsmallerB} \Big ( \bigcup_{\rho \in \Sigma(1) \setminus \overline{\Sigma}(1)} \Lambda_\rho \Big) \cup V(B(\overline{\Sigma})) \, \subseteq \, {\cal A}_\Sigma. \end{equation}
    That is, we have $[\Lambda_\rho] \in F_{n-2}({\cal A}_\Sigma)$ for $\rho \in \Sigma(1) \setminus \overline{\Sigma}(1)$ and $[\Lambda_J] \in F_{n-1-|J|}({\cal A}_\Sigma)$ for $J \in \overline{\Sigma}^c$.
\end{proposition}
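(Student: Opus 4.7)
The plan is to exhibit a direct factorization of ${\rm Adj}_\Sigma$ that makes both vanishing statements manifest. First I would observe that the definition of ${\rm Adj}_\Sigma$ only involves the $d$-dimensional cones, and that $\Sigma(d) = \overline{\Sigma}(d)$ by construction. For each $\sigma \in \Sigma(d)$, the set of rays of $\sigma$ lies in $\overline{\Sigma}(1)$, so we can split the product indexing set disjointly as
\[ \Sigma(1) \setminus \sigma(1) \, = \, \bigl(\overline{\Sigma}(1) \setminus \sigma(1)\bigr) \, \sqcup \, \bigl(\Sigma(1) \setminus \overline{\Sigma}(1)\bigr). \]
Since the second piece does not depend on $\sigma$, it factors out of the sum defining ${\rm Adj}_\Sigma$, yielding
\[ {\rm Adj}_\Sigma \, = \, \Big(\prod_{\rho \in \Sigma(1) \setminus \overline{\Sigma}(1)} x_\rho\Big) \cdot {\rm Adj}_{\overline{\Sigma}}. \]

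From this factorization, the first claim is immediate: if $\rho \in \Sigma(1) \setminus \overline{\Sigma}(1)$, then $x_\rho$ divides ${\rm Adj}_\Sigma$, so ${\rm Adj}_\Sigma$ vanishes on $\Lambda_\rho$, giving $[\Lambda_\rho] \in F_{n-2}({\cal A}_\Sigma)$. For the second claim, I would apply Proposition \ref{prop:containedinB} to the subfan $\overline{\Sigma}$ (viewed in its own polynomial ring $R_{\overline{\Sigma}} = \mathbb{C}[x_\rho : \rho \in \overline{\Sigma}(1)]$): every monomial of ${\rm Adj}_{\overline{\Sigma}}$ is of the form $x^{\hat{\sigma}}$ for a maximal cone $\sigma \in \overline{\Sigma}(d)$, which is exactly a minimal generator of $B(\overline{\Sigma})$. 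Hence ${\rm Adj}_{\overline{\Sigma}} \in B(\overline{\Sigma})$, and by the factorization above ${\rm Adj}_\Sigma \in B(\overline{\Sigma}) \cdot R_\Sigma$ as well. Therefore ${\rm Adj}_\Sigma$ vanishes on $V(B(\overline{\Sigma})) \subseteq \mathbb{P}^{n-1}$, and for each $J \in \overline{\Sigma}^c$ we conclude $[\Lambda_J] \in F_{n-1-|J|}({\cal A}_\Sigma)$.

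There is no serious obstacle here — the proof is essentially bookkeeping once one notices that the rays outside $\overline{\Sigma}(1)$ contribute a common monomial factor, reducing the statement to Proposition \ref{prop:containedinB} applied to the subfan $\overline{\Sigma}$. The only thing worth stating carefully is the disjoint decomposition of $\Sigma(1) \setminus \sigma(1)$, which legitimizes pulling the factor $\prod_{\rho \in \Sigma(1) \setminus \overline{\Sigma}(1)} x_\rho$ out of the sum.
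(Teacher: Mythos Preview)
Your proposal is correct and follows essentially the same approach as the paper: you exhibit the factorization ${\rm Adj}_\Sigma = \bigl(\prod_{\rho \in \Sigma(1)\setminus\overline{\Sigma}(1)} x_\rho\bigr)\cdot {\rm Adj}_{\overline{\Sigma}}$ and then invoke Proposition~\ref{prop:containedinB} for $\overline{\Sigma}$, exactly as the paper does. Your explicit disjoint decomposition of $\Sigma(1)\setminus\sigma(1)$ is a slightly more detailed justification of the factorization than the paper gives, but the argument is the same.
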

\begin{proof}
    A ray of $\Sigma$ which does not belong to $\overline{\Sigma}(1)$ is not contained in any $d$-dimensional cone. Hence, $x_\rho$ is a factor of ${\rm Adj}_\Sigma$. If $\overline{U}$ is the submatrix of $U$ with rows indexed by $\overline{\Sigma}(1)$, then
    \begin{equation} \label{eq:looserays}
        {\rm Adj}_{\Sigma, U} \, = \, \Big ( \prod_{\rho \in \Sigma(1) \setminus \overline{\Sigma}(1) } x_\rho \Big ) \cdot {\rm Adj}_{\overline{\Sigma},\overline{U}}.
    \end{equation}
    The inclusion $V(B(\overline{\Sigma})) \subseteq {\cal A}_\Sigma$ now follows from ${\rm Adj}_{\overline{\Sigma},\overline{U}} \in B(\overline{\Sigma})$, see Proposition \ref{prop:containedinB}. 
\end{proof}

\begin{example}
    The polynomial \eqref{eq:AdjIncomplete} has $x_6$ as a factor, which corresponds to the fact that $\rho_6$ does not belong to any cone of $\Sigma(3)$. The inclusion \eqref{eq:containedinsmallerB} reads $\Lambda_6 \cup \Lambda_{13} \cup \Lambda_{24} \subseteq {\cal A}_\Sigma$.
    In particular, by \eqref{eq:ZIncomplete}, this implies that $Z(\Sigma) \subseteq {\cal A}_\Sigma$, as stated in Proposition \ref{prop:containedinB}.
\end{example}

In the rest of the section, we focus on the case where $\Sigma = \Sigma_P$ is the complete normal fan of a $d$-dimensional simple polytope $P \subseteq \mathbb{R}^d$. To emphasize this, we denote the adjoint hypersurface by ${\cal A}_P = {\cal A}_{\Sigma_P} \subseteq \mathbb{P}^{n-1}$. The set $\Sigma^c$ now consists of subsets $J$ of the set of facets of $P$ such that $\bigcap_{Q \in J} Q = \emptyset$. We have $\Sigma = \overline{\Sigma}$ and Proposition \ref{prop:containedinB} says that if $J \in \Sigma^c$, then $\Lambda_J = V(x_Q \, : \, Q \in J) \subseteq {\cal A}_P$. For each edge $e$ of $P$, the subspace $\Lambda_e \, = \, V(x_Q \, : \, e \subseteq Q) \, \subseteq \, \mathbb{P}^{n-1}$ has dimension $n-d$. Let $Q_e^1$ and $Q_e^2$ be the two facets of $P$ which do not contain $e$, but do intersect $e$ in its vertices $v_e^1$ and $v_e^2$ respectively. Finally, our next statement uses 
\begin{equation} \label{eq:He} H_e \, = \, \{ x\in \mathbb{P}^{n-1} \, : \, u_{v_e^1} \, x_{Q_e^2} + u_{v_e^2} \, x_{Q_e^1} \, = \, 0 \}, \end{equation}
where $u_v = |\det(U_{\sigma_v})|$ and $\sigma_v \in \Sigma_P$ is the $d$-dimensional cone corresponding to the vertex $v$.

\begin{theorem} \label{thm:interpol}
    Let $P\subseteq \mathbb{R}^d$ be a full-dimensional simple convex polytope and let $\Sigma = \Sigma_P$ be its normal fan. We have that ${\cal A}_P = \{ x \in \mathbb{P}^{n-1} \, : \, {\rm Adj}_P(x) = 0 \}$ is the unique hypersurface of degree $n-d$ satisfying the following properties:
    \begin{enumerate}
        \item $[\Lambda_J] \in F_{n-1-|J|}({\cal A}_P)$ for each primitive collection $J \in \Sigma^c$ and 
        \item $[\Lambda_e \cap H_e] \in F_{n-d-1}({\cal A}_P)$ for each edge $e$ of $P$.
    \end{enumerate}
\end{theorem}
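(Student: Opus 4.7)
The plan is to verify that ${\cal A}_P$ satisfies both interpolation conditions and then to show that these two conditions, together with the degree bound, determine ${\rm Adj}_P$ up to scalar.

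\emph{Satisfaction of (1) and (2).} Condition (1) is immediate from Proposition \ref{prop:containedinB} applied to primitive collections. For (2), fix an edge $e$ of $P$ and apply Lemma \ref{lem:restrictpolytope} with $\Delta = e$. Since $e$ is a 1-dimensional polytope, its normal fan has two top-dimensional cones, namely the rays corresponding to the endpoints $v_e^1$ and $v_e^2$, and by Example \ref{ex:simpleex}, ${\rm Adj}_{e,U_e}$ is a linear form in $x_{Q_e^1}$ and $x_{Q_e^2}$. The identity $|\det (U_e)_{\overline{\sigma}}| = c_e \cdot |\det U_\sigma| = c_e \cdot u_v$ from the proof of Lemma \ref{lem:restrict} (applied with $\sigma = \sigma_v$ for $v = v_e^1, v_e^2$) identifies this linear form as $c_e \cdot (u_{v_e^1} \, x_{Q_e^2} + u_{v_e^2} \, x_{Q_e^1})$. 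Substituting back gives
\[ ({\rm Adj}_P)_{|\Lambda_e} \, = \, \Big(\prod_{Q \cap e = \emptyset} x_Q\Big) \cdot (u_{v_e^1} \, x_{Q_e^2} + u_{v_e^2} \, x_{Q_e^1}), \]
which vanishes on $\Lambda_e \cap H_e$.

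\emph{Reduction to a sum over vertices.} Let $f \in R_\Sigma$ be a homogeneous polynomial of degree $n-d$ satisfying (1) and (2). By (1) and \eqref{eq:Z}, $f$ vanishes on $Z(\Sigma)$. Since $B(\Sigma)$ is a squarefree monomial ideal, $I(Z(\Sigma)) = \sqrt{B(\Sigma)} = B(\Sigma)$, so $f \in B(\Sigma)$. As $\Sigma = \Sigma_P$ is complete and simplicial with all maximal cones of dimension $d$, the minimal generators of $B(\Sigma)$ are the monomials $x^{\hat\sigma_v}$, $v$ a vertex of $P$, each of degree exactly $n-d$. Hence the degree $n-d$ part of $B(\Sigma)$ is spanned by these monomials, and we may write $f = \sum_v c_v \, x^{\hat\sigma_v}$ for some scalars $c_v \in \mathbb{C}$.

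\emph{Fixing the coefficients.} For any edge $e$ with associated $(d-1)$-dimensional cone $\tau_e$, the monomial $x^{\hat\sigma_v}$ has a nonzero restriction to $\Lambda_e$ if and only if $\tau_e \subseteq \sigma_v$, i.e., $v \in \{v_e^1, v_e^2\}$. Factoring out the common squarefree monomial $M_e = \prod_{Q \cap e = \emptyset} x_Q$ gives
\[ f_{|\Lambda_e} \, = \, M_e \cdot (c_{v_e^1} \, x_{Q_e^2} + c_{v_e^2} \, x_{Q_e^1}). \]
The variables appearing in $M_e$ are unconstrained on $\Lambda_e \cap H_e$, so $M_e$ does not vanish identically there. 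Condition (2) therefore forces the linear form $c_{v_e^1} \, x_{Q_e^2} + c_{v_e^2} \, x_{Q_e^1}$ to be a scalar multiple of $u_{v_e^1} \, x_{Q_e^2} + u_{v_e^2} \, x_{Q_e^1}$, giving $(c_{v_e^1}, c_{v_e^2}) = \lambda_e \cdot (u_{v_e^1}, u_{v_e^2})$ for some $\lambda_e \in \mathbb{C}$. Since the 1-skeleton of $P$ is connected (trivially for $d = 1$, by Balinski's theorem otherwise), comparing $\lambda_e$ along edges meeting at a common vertex shows $\lambda_e = \lambda$ is independent of $e$, so $f = \lambda \cdot {\rm Adj}_P$.

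The most delicate step is the first: one has to track the determinantal identities through the change of coordinates $T_e$ in order to recognize the linear factor in $({\rm Adj}_P)_{|\Lambda_e}$ as exactly the defining equation of $H_e$. Once this is done, the uniqueness argument is a formal consequence of the ideal-theoretic structure of $B(\Sigma)$ and the connectedness of the edge graph of $P$.
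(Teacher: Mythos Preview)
Your proof is correct and follows essentially the same approach as the paper's: verify (1) via Proposition~\ref{prop:containedinB}, verify (2) via Lemma~\ref{lem:restrictpolytope} applied to an edge, then use $f \in B(\Sigma)_{n-d}$ to write $f$ as a $\mathbb{C}$-linear combination of the $x^{\hat\sigma_v}$ and propagate the coefficient ratios along the (connected) edge graph. You supply a few extra details the paper leaves implicit---the radical identity $\sqrt{B(\Sigma)} = B(\Sigma)$ and the non-vanishing of $M_e$ on $\Lambda_e \cap H_e$---but the argument is the same.
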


\begin{proof}
    Note that ${\cal A}_P$ satisfies the first property by Proposition \ref{prop:containedinB}. The second property follows from the fact that, setting $x_Q = 0$ for each facet $Q$ containing $e$, we obtain 
    \begin{equation} \label{eq:restricttoedge} ({\rm Adj}_P)_{|\Lambda_e} \, = \, \Big (\prod_{Q \cap e = \emptyset} x_Q \Big ) \cdot (u_{v_e^1} \, x_{Q_e^2} + u_{v_e^2} \, x_{Q_e^1}), \end{equation}
    (Lemma \ref{lem:restrictpolytope}). Additionally setting $x_Q = 0$ with $Q \cap e = \emptyset$ gives $\Lambda_J$ for some $J \in \Sigma^c$. Setting $u_{v_e^1} \, x_{Q_e^2} + u_{v_e^2} \, x_{Q_e^1} = 0$ instead shows that $\Lambda_e \cap H_e \subseteq {\cal A}_P$. It remains to show uniqueness. Suppose that $X = \{ x \in \mathbb{P}^{n-1} \, : \, f(x) = 0 \}$ is a hypersurface of degree $n-d$ satisfying properties 1 and 2. Property 1 implies that the defining equation $f$ of $X$ is contained in the degree-$(n-d)$ part of the irrelevant ideal $B(\Sigma)_{n-d}$. This is spanned as a $\mathbb{C}$-vector space by $\{ x^{\hat{\sigma}} \, : \, \sigma \in \Sigma(d) \}$. In other words, we must have $f = \sum_{\sigma \in \Sigma(d)} z_\sigma \, x^{\hat{\sigma}}$ for some $z_\sigma \in \mathbb{C}$. The condition $\Lambda_e \cap H_e \subseteq X$ fixes the ratio between the coefficients $z_{\sigma_{v_e^1}}$ and $z_{\sigma_{v_e^2}}$. Since the edge graph of $P$ is connected, this fixes $f$ up to scaling. Hence, $X$ is unique, and it equals ${\cal A}_P$.
\end{proof}

\begin{remark} One can replace \emph{each edge of $P$} by \emph{each edge in a path on the edge graph of $P$ which visits all vertices} in property 2 of Theorem \ref{thm:interpol}. 
\end{remark}

Related to the study of Fano schemes is the question \emph{for which linear restrictions does ${\rm Adj}_\Sigma$ factor as a product of $n-d$ linear forms}? This is interesting in the physics application as well, see \cite{arkani2024hidden,umbert2025splitting}. We define the \emph{$k$-th split variety} of a hypersurface $X \subset \mathbb{P}^{n-1}$ as follows: 
\[ {\rm Split}_k(X) \, = \, \{ [\Lambda] \in {\rm Gr}(k+1,n) \, : \, X \cap \Lambda \text{ is a union of $(k-1)$-planes } \}. \]
For each face $\Delta \subseteq P$, we define the linear space $\Lambda_\Delta = V(x_Q \, : \, Q \supseteq \Delta)$ as in \eqref{eq:LambdaDelta}.
\begin{corollary} \label{cor:simplexfaces}
    Let $P\subseteq \mathbb{R}^d$ be a full-dimensional simple convex polytope. If $\Delta$ is a $(d-k)$-dimensional face of $P$ which is a product of simplices, then we have $[\Lambda_\Delta] \in {\rm Split}_{n-k-1}({\cal A}_P)$.
\end{corollary}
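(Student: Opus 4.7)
The plan is to unwind the statement directly from Lemmas \ref{lem:restrictpolytope} and \ref{lem:factorfans}, combined with the explicit description of adjoints of simplices in Example \ref{ex:simpleex}. First, observe that since $P$ is simple and $\Delta$ has codimension $k$, exactly $k$ facets of $P$ contain $\Delta$, so $\Lambda_\Delta$ is an $(n-k-1)$-plane in $\mathbb{P}^{n-1}$ and $[\Lambda_\Delta] \in {\rm Gr}(n-k,n)$. To verify $[\Lambda_\Delta] \in {\rm Split}_{n-k-1}({\cal A}_P)$, we must show that the degree-$(n-d)$ polynomial $({\rm Adj}_P)_{|\Lambda_\Delta}$ factors completely into linear forms on $\Lambda_\Delta$, so that ${\cal A}_P \cap \Lambda_\Delta$ is a union of hyperplanes of $\Lambda_\Delta$, i.e., of $(n-k-2)$-planes.

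By Lemma \ref{lem:restrictpolytope}, we have
\[ ({\rm Adj}_{P,U})_{|\Lambda_\Delta} \, = \, c_\Delta^{-1} \cdot \Big ( \prod_{Q \cap \Delta = \emptyset} x_Q \Big ) \cdot {\rm Adj}_{\Delta, U_\Delta}, \]
and the first factor is already a product of the $n-d-m$ linear forms $x_Q$, where $m$ denotes the number of facets of $\Delta$ minus $\dim \Delta$. It therefore suffices to show that ${\rm Adj}_{\Delta,U_\Delta}$ itself splits into a product of $m$ linear forms. Writing $\Delta = \Delta_1 \times \cdots \times \Delta_m$ with each $\Delta_i$ a simplex, the normal fan decomposes as $\Sigma_\Delta = \Sigma_{\Delta_1} \times \cdots \times \Sigma_{\Delta_m}$ and the ray matrix $U_\Delta$ decomposes (after the change of coordinates $T_\Delta$) as a block-diagonal sum $U_{\Delta_1} \oplus \cdots \oplus U_{\Delta_m}$.

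Now I apply Lemma \ref{lem:factorfans} iteratively to obtain
\[ {\rm Adj}_{\Delta, U_\Delta} \, = \, \prod_{i=1}^m {\rm Adj}_{\Delta_i, U_{\Delta_i}}, \]
and by Example \ref{ex:simpleex}, each factor ${\rm Adj}_{\Delta_i,U_{\Delta_i}}$ is a linear form (indeed a $(d_i+1)\times(d_i+1)$ determinant, where $d_i = \dim \Delta_i$). Consequently, $({\rm Adj}_P)_{|\Lambda_\Delta}$ is a product of $(n-d-m) + m = n-d$ linear forms on $\Lambda_\Delta$, which is exactly the required degree. This identifies ${\cal A}_P \cap \Lambda_\Delta$ with a union of hyperplanes in $\Lambda_\Delta$, proving $[\Lambda_\Delta] \in {\rm Split}_{n-k-1}({\cal A}_P)$.

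The only real subtlety, and the step I would check carefully, is the compatibility between the product decomposition $\Delta = \Delta_1 \times \cdots \times \Delta_m$ and the construction of $U_\Delta$ in Lemma \ref{lem:restrictpolytope}: one needs to verify that the change-of-coordinates matrix $T_\Delta$ can be chosen so that $U \cdot T_\Delta$ has the block-diagonal form required by Lemma \ref{lem:factorfans}. This follows because the affine span of $\Delta$ decomposes as a direct sum of the affine spans of the factors $\Delta_i$, and the facet normals of each factor lie in the corresponding summand after quotienting by ${\rm span}_{\mathbb{R}}(\tau)$, where $\tau$ is the cone in $\Sigma_P$ dual to $\Delta$. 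Everything else is a direct assembly of previously proved results.
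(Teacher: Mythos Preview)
Your proof is correct and follows exactly the same route as the paper's: restrict via Lemma~\ref{lem:restrictpolytope}, factor via Lemma~\ref{lem:factorfans}, and invoke Example~\ref{ex:simpleex} for the linearity of simplex adjoints. The paper's proof is a two-sentence summary of precisely this chain; you have simply unpacked it with more care, including a correct degree count and the observation that $\Lambda_\Delta$ has the right dimension. The subtlety you flag about block-diagonalizing $U_\Delta$ is legitimate and is glossed over in the paper; your resolution (choose $T_\Delta$ so that the quotient coordinates split along the simplex factors, noting that ${\rm Adj}_{\Delta,U_\Delta}$ changes only by a scalar under $\mathrm{GL}_{d-k}$ action on the right) is the right one.
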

\begin{proof}
    The universal adjoint of a simplex is linear, and that of a product of simplices is a product of linear forms by Lemma \ref{lem:factorfans}. The statement follows from Lemma~\ref{lem:restrictpolytope}.
\end{proof}

\begin{example}
    Equation \eqref{eq:restricttoedge} confirms that $[\Lambda_e] \in {\rm Split}_{n-d}({\cal A}_P)$ for each edge $e$ of $P$.
\end{example}

\begin{example}
    The quadrilateral faces of the associahedron in Figure \ref{fig:ABHY3} are products of line segments. The restriction $({\rm Adj}_P)_{|x_{14} = 0}$ factors as $x_{25}x_{26}x_{36}x_{35} (x_{13} + x_{24}) (x_{15} + x_{46})$.
\end{example}

The following statement says that the Fano schemes of $P$ contain those of its faces.  For a face $\Delta \subseteq P$, let $\Delta^c = \{Q \subseteq P \, : \, \text{$Q$ is a facet and $Q \cap \Delta = \emptyset$} \}$ and let $|\Delta^c|$ be its cardinality. 

\begin{proposition} \label{prop:embedding}
    Let $P\subseteq \mathbb{R}^d$ be a full-dimensional simple convex polytope. Let $\Delta$ be a face of $P$. For each $k$, there is an injective map $F_k({\cal A}_\Delta) \hookrightarrow F_{k + |\Delta^c|}({\cal A}_P)$. 
\end{proposition}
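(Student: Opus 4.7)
The plan is to construct the injection by a \emph{coordinate extension}: to each $k$-plane $\Lambda \subseteq {\cal A}_\Delta$ I would adjoin the coordinate subspace spanned by the basis vectors $e_Q$ for $Q \in \Delta^c$, producing a plane of dimension $k + |\Delta^c|$ in $\mathbb{P}^{n-1}$. The key technical input is Lemma~\ref{lem:restrictpolytope}, which expresses $({\rm Adj}_P)_{|\Lambda_\Delta}$ as a monomial in the $\Delta^c$-variables times ${\rm Adj}_\Delta$ in the remaining variables.

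First I would partition the facets of $P$ into three sets: $A = \{Q : \Delta \subseteq Q\}$, $B = \{Q : Q \cap \Delta \text{ is a facet of } \Delta\}$, and $C = \Delta^c$. This induces the decomposition $\mathbb{C}^n = \mathbb{C}^A \oplus \mathbb{C}^B \oplus \mathbb{C}^C$. The adjoint ${\rm Adj}_\Delta$ is a polynomial in the $B$-variables (these are the rays of the normal fan $\Sigma_\Delta$), so ${\cal A}_\Delta$ sits inside $\mathbb{P}(\mathbb{C}^B) \cong \mathbb{P}^{|B|-1}$, while $\Lambda_\Delta = \mathbb{P}(\mathbb{C}^B \oplus \mathbb{C}^C)$.

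Given $[\Lambda] \in F_k({\cal A}_\Delta)$, let $V_\Lambda \subseteq \mathbb{C}^B$ be its $(k+1)$-dimensional affine cone, and set
\[ \tilde V_\Lambda \, := \, V_\Lambda \oplus \mathbb{C}^C \, \subseteq \, \mathbb{C}^n, \qquad \tilde\Lambda \, := \, \mathbb{P}(\tilde V_\Lambda) \, \subseteq \, \mathbb{P}^{n-1}. \]
Since $V_\Lambda$ and $\mathbb{C}^C$ sit in complementary coordinates, $\tilde V_\Lambda$ has dimension $(k+1) + |\Delta^c|$, so $\dim \tilde\Lambda = k + |\Delta^c|$, as required. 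To verify $\tilde\Lambda \subseteq {\cal A}_P$, note that $\tilde\Lambda \subseteq \Lambda_\Delta$, so by Lemma~\ref{lem:restrictpolytope} one has
\[ {\rm Adj}_P(x) \, = \, c_\Delta^{-1} \cdot \Big(\prod_{Q \in C} x_Q \Big) \cdot {\rm Adj}_\Delta(x_B) \quad \text{for every } x \in \tilde\Lambda, \]
where $x_B$ is the $B$-component of $x$. By construction $x_B \in V_\Lambda$, which lies in the affine cone over ${\cal A}_\Delta$, so ${\rm Adj}_\Delta(x_B) = 0$, and hence ${\rm Adj}_P(x) = 0$.

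Injectivity is then immediate: $V_\Lambda$ is recovered from $\tilde V_\Lambda$ as $\tilde V_\Lambda \cap \mathbb{C}^B$, equivalently as the projection of $\tilde V_\Lambda$ onto its $B$-coordinates. I do not expect any serious obstacle; once Lemma~\ref{lem:restrictpolytope} is available, the argument is essentially bookkeeping, and the only point requiring care is to correctly separate the three classes $A$, $B$, $C$ of facets so that the factorization of $({\rm Adj}_P)_{|\Lambda_\Delta}$ aligns with the direct sum decomposition of $\mathbb{C}^n$.
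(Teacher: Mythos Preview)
Your proposal is correct and follows essentially the same route as the paper: both constructions take a $k$-plane $\Lambda \subseteq {\cal A}_\Delta$, keep the same linear conditions on the $B$-variables, and let the $\Delta^c$-coordinates run freely inside $\Lambda_\Delta$, then invoke Lemma~\ref{lem:restrictpolytope} to see that ${\rm Adj}_P$ vanishes there. Your version is slightly more explicit in spelling out the $A$/$B$/$C$ partition and in verifying injectivity (which the paper leaves implicit), but the underlying map $\Lambda \mapsto \tilde\Lambda$ is identical to the paper's $\Lambda \mapsto \Lambda'$.
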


\begin{proof}
    Let $\ell$ be the dimension of $\Delta$ and let $n_\Delta$ be its number of facets. Let $\Lambda$ be a $k$-plane contained in the hypersurface ${\cal A}_\Delta \subseteq \mathbb{P}^{n_\Delta-1}$. It is defined by $n_\Delta - k - 1$ linear equations in the variables $\{x_Q \, : \, Q \cap \Delta \text{ is a facet of $\Delta$}\}$. These same equations define a linear subspace $\Lambda'$ of dimension $n-1-(d-\ell)-(n_\Delta - k -1) = k + |\Delta^c|$ in $\Lambda_\Delta \simeq \mathbb{P}^{n-1-(d-\ell)}$. By Lemma \ref{lem:restrictpolytope}, the polynomial ${\rm Adj}_P(x)$ vanishes on this linear subspace. The map is $\Lambda \mapsto \Lambda'$. 
\end{proof}

\begin{example}
    The adjoint of an edge $e$ of $P$ is a linear polynomial in two variables. It defines one point ${\cal A}_e = \{\Lambda\} \subset \mathbb{P}^1$, so that $[\Lambda] \in F_0({\cal A}_e)$. Its image in $F_{n-d-1}({\cal A}_P)$ under the embedding from Proposition \ref{prop:embedding} is the $(n-d-1)$-plane $[\Lambda_e \cap H_e]$ from Theorem \ref{thm:interpol}.
\end{example}

In the next section, we will use the results of Section \ref{sec:dualvolume} to explain some of the linear spaces contained in ${\cal A}_P$ from the chamber complex and deformation cone of the polytope $P$.

\section{Deformations} \label{sec:chambercomplex}

In this section, $\Sigma$ is the normal fan of a simple convex polytope $P \subset \mathbb{R}^d$ of dimension $d$, and $U$ is its ray matrix. The \emph{deformation cone} of $\Sigma$ is the cone $C$ in the chamber complex ${\rm Ch}(U)$ such that the normal fan of $P_x$ from \eqref{eq:Px} equals $\Sigma$ for all $x \in {\rm int}(C)$ \cite[Section 2]{CastilloLiu2020NestedBraidFans}. We recall its characterization in terms of convex piecewise linear functions. This is standard in toric geometry, see for instance \cite[Section 6.1]{CoxLittleSchenck2011} and \cite{cox2009primitive}.  A function $\phi: \mathbb{R}^d \rightarrow \mathbb{R}$ is called \emph{piecewise linear} on $\Sigma$ if its restriction to each $\sigma \in \Sigma(d)$ is given by a linear function. In particular, $\phi$ is continuous. Each such piecewise linear function is specified by its values at $u_\rho, \rho \in \Sigma(1)$. Hence, the piecewise linear functions on $\Sigma$ form a vector space ${\rm PL}(\Sigma) \simeq \mathbb{R}^n$. Below, we will consistently identify ${\rm PL}(\Sigma)$ with $\mathbb{R}^n$ in this way: $\phi \sim (\phi(u_\rho))_{\rho \in \Sigma(1)}$. The \emph{deformation cone} ${\rm Def}(\Sigma) \subseteq {\rm PL}(\Sigma)$ is the $n$-dimensional cone of \emph{convex} piecewise linear functions on $\Sigma$.
The interior of ${\rm Def}(\Sigma)$ consists of \emph{strictly convex} functions $\phi \in {\rm PL}(\Sigma)$, which means that $\phi(u) + \phi(v) > \phi(u+v)$ when $u$ and $v$ belong to the interior of different maximal cones of $\Sigma$. This open cone is called the \emph{type cone} of $\Sigma$. In terms of polytopes, we have ${\rm Def}(\Sigma) \, = \, \{ x \in \mathbb{R}^n \, : \, \Sigma_{P_x} \text{ is refined by } \Sigma\}$ and ${\rm int}({\rm Def}(\Sigma)) \, = \, \{ x \in \mathbb{R}^n \, : \, \Sigma_{P_x} = \Sigma \}$. The polytope $P_x$ is that from \eqref{eq:Px}, and a fan $\Sigma'$ is said to be \emph{refined} by $\Sigma$ if each cone of $\Sigma'$ is a union of cones in $\Sigma$. Moving $x$ around in the deformation cone corresponds to translating the facets of $P_x$ in the direction of their normal ray, without ever crossing a vertex of $P_x$.

The cone ${\rm Def}(\Sigma)$ is not pointed. Its lineality space consists of the (global) linear functions $\phi: \mathbb{R}^d \rightarrow \mathbb{R}$. Under the identification ${\rm PL}(\Sigma) \simeq \mathbb{R}^n$, this lineality space is the $d$-dimensional vector space ${\rm im}(U)$. The quotient by ${\rm im}(U)$ leads to an $(n-d)$-dimensional pointed cone ${\rm Nef}(\Sigma) \, = \, {\rm Def}(\Sigma) \, / \, {\rm im}(U)$.
This is called the \emph{nef cone} of $\Sigma$. Here \emph{nef} is short for \emph{numerically effective}. The name comes from divisor theory on normal toric varieties \cite[Section 6.2]{CoxLittleSchenck2011}. 

\begin{example} 
The nef cone of $\Sigma$ in Example \ref{ex:chambercomplex} is the three-dimensional cone over the blue pentagon in Figure \ref{fig:chambercomplexpentagon}. For $x \in {\rm int}({\rm Def}(\Sigma))$, the piecewise linear function $\phi: \mathbb{R}^2 \rightarrow \mathbb{R}$ defined by $\phi(u_\rho) = x_\rho$ is strictly convex on $\Sigma$. If $x$ lies on the relative interior of a facet of the deformation cone, then the function $\phi$ is not strictly convex; it is linear on the union of two adjacent cones of $\Sigma(2)$. This union of two cones is a cone in the normal fan of the quadrilateral $P_x$ seen in the corresponding triangular cell of Figure \ref{fig:chambercomplexpentagon}. The fan $\Sigma$ refines~$\Sigma_{P_x}$.
\end{example}

We derive inequalities for ${\rm Def}(\Sigma)$ from faces of $P$. For a face $\Delta \subseteq P$, let $\sigma_\Delta \in \Sigma$ be its normal cone. Let $\Delta \subseteq P$ be a face which is a simplex of dimension $d-k>0$. The set ${\rm nb}(\Delta)$ consists of the $d-k+1$ facets of $P$ which intersect $\Delta$ in one of its $d-k+1$ facets. It is identified with ${\rm nb}(\sigma_\Delta)$ from \eqref{eq:nb}. 
We will need the following $(d+1) \times (d+1)$ determinant:
\begin{equation} \label{eq:WDelta} \quad W_\Delta(x) \, = \, \det \begin{pmatrix}
    U_{{\rm nb}(\Delta)} & x_{{\rm nb}(\Delta)}\\
    U_{\sigma_\Delta} & x_{\sigma_\Delta} 
\end{pmatrix} \, = \, \sum_{\rho \in {\rm nb}(\Delta)} c_\Delta^{\, \rho} \, x_{\rho} +  \sum_{\rho' \in \sigma_\Delta(1)} c_\Delta^{ \, \rho'} \, x_{\rho'}.\end{equation}
Here, $U_{{\rm nb}(\Delta)}$ is the submatrix of $U$ with rows indexed by ${\rm nb}(\Delta)$, and $U_{\sigma_\Delta}$ is indexed by the rays of $\sigma_\Delta$. The vectors $x_{{\rm nb}(\Delta)}$ and $x_{\sigma_\Delta}$ are subvectors of the column vector $x$, indexed compatibly with $U_{{\rm nb}(\Delta)}$ and $U_{\sigma_\Delta}$. Since the rows can be re-ordered, $W_\Delta(x)$ is defined up to a sign. 

\begin{proposition}
    Let $\Delta \subseteq P$ be a face of $P$ which is a simplex of dimension $d-k>0$. The coefficients $c_\Delta^{\, \rho}, \rho \in {\rm nb}(\Delta)$ appearing in \eqref{eq:WDelta} are nonzero and have the same sign denoted by ${\rm sign}(\Delta) \in \{ \pm 1\}$. We have that ${\rm sign}(\Delta) \cdot W_\Delta(x) \geq 0$ for all $x \in {\rm Def}(\Sigma)$. Moreover, we have 
    \begin{equation} \label{eq:wallrel} {\rm Def}(\Sigma) \, = \, \{x \in \mathbb{R}^n \, : \, {\rm sign}(e) \cdot W_e(x) \geq 0 \text{ for all edges } e \subseteq P \}.\end{equation}
\end{proposition}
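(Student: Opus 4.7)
The plan is to first identify the coefficients $c_\Delta^{\,\rho}$ (for $\rho \in {\rm nb}(\Delta)$) combinatorially via the coordinate change $T_{\sigma_\Delta}$ used in Lemma~\ref{lem:restrict}. Applying $T_{\sigma_\Delta}$ to the first $d$ columns of the $(d{+}1)\times(d{+}1)$ matrix in the definition of $W_\Delta(x)$ renders it block lower triangular; Laplace expansion along the last column then writes each such $c_\Delta^{\,\rho}$ as the product of a Laplace sign $(-1)^{i+d+1}$, a $c_{\sigma_\Delta}^{\pm 1}$ factor, and the $(d{-}k)\times(d{-}k)$ minor $\tilde M_\rho$ of the projected ray matrix $(\overline{u_{\rho'}})_{\rho' \in {\rm nb}(\Delta)}$ with row $\rho$ deleted. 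Because $\Delta$ is a simplex, the star fan $\Sigma_{\sigma_\Delta}$ is complete in $\mathbb{R}^{d-k}$ on exactly $d{-}k{+}1$ rays, so the $\overline{u_\rho}$ admit a unique (up to positive scaling) positive dependence $\sum_{\rho} a_\rho \overline{u_\rho} = 0$ with $a_\rho > 0$. The Plücker/Cramer relation for $d{-}k{+}1$ vectors in $\mathbb{R}^{d-k}$ writes the same dependence with coefficients proportional to the alternating minors $(-1)^i \tilde M_\rho$; positivity of the $a_\rho$ forces those alternating minors to share a common sign. Combining with the Laplace sign, the resulting product $(-1)^{i+d+1}(-1)^i = (-1)^{d+1}$ is independent of $i$, so all $c_\Delta^{\,\rho}$ with $\rho \in {\rm nb}(\Delta)$ are nonzero with a common sign ${\rm sign}(\Delta)$, and each $a_\rho$ is a positive multiple of ${\rm sign}(\Delta) \cdot c_\Delta^{\,\rho}$.

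For the inequality ${\rm sign}(\Delta)\,W_\Delta(x) \geq 0$ on $x \in {\rm Def}(\Sigma)$, I would next observe that $W_\Delta(x)$ is unchanged when $x$ is translated by an element of ${\rm im}(U)$, because this amounts to adding a linear combination of the first $d$ columns to the last column of the defining matrix. After such a translation I may assume $x_{\rho'} = 0$ for all $\rho' \in \sigma_\Delta(1)$; geometrically, this replaces the convex piecewise linear function $\phi$ with $\phi(u_\rho) = x_\rho$ by $\phi$ minus the unique global linear function agreeing with it on $\sigma_\Delta$, preserving convexity. The normalized $\phi$ vanishes on ${\rm span}(\sigma_\Delta)$ and descends to a convex piecewise linear function $\overline\phi$ on $\Sigma_{\sigma_\Delta}$ with $\overline\phi(\overline{u_\rho}) = x_\rho$ for $\rho \in {\rm nb}(\Delta)$. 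Convexity of $\overline\phi$ applied to the convex combination witnessed by $0 = \sum_\rho a_\rho \overline{u_\rho}$ yields
\[ 0 \,=\, \overline\phi(0) \,\leq\, \sum_{\rho \in {\rm nb}(\Delta)} a_\rho\, \overline\phi(\overline{u_\rho}) \,=\, \sum_{\rho \in {\rm nb}(\Delta)} a_\rho\, x_\rho, \]
which by the first paragraph is a positive scalar multiple of ${\rm sign}(\Delta)\,W_\Delta(x)$.

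For the characterization \eqref{eq:wallrel}, the inclusion $\subseteq$ is the case $k = d-1$ of the preceding step applied to each edge $e \subseteq P$ (a one-dimensional simplex face). The reverse inclusion is the classical statement that on a simplicial complete fan, a piecewise linear function is convex if and only if it satisfies the wall inequality across every codimension-one cone; see for instance \cite[Section~6.1]{CoxLittleSchenck2011}. Under the bijection $e \leftrightarrow \sigma_e$ between edges of $P$ and walls of $\Sigma_P$, the wall inequality across $\sigma_e$ is, by the identification in the first paragraph, exactly ${\rm sign}(e)\,W_e(x) \geq 0$.

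I expect the main obstacle to be the sign bookkeeping in the first paragraph: carefully lining up the Plücker alternating signs, the Laplace expansion sign $(-1)^{i+d+1}$, and the sign of $\det T_{\sigma_\Delta}$ to produce a coherent ${\rm sign}(\Delta)$ such that the $a_\rho$ become positive multiples of ${\rm sign}(\Delta) \cdot c_\Delta^{\,\rho}$. Once that identification is secured, the convexity argument via $\overline\phi$ and the classical wall criterion deliver the remaining assertions without further calculation.
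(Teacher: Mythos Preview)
Your proof is correct and follows the same approach as the paper's: both use the coordinate change $T_{\sigma_\Delta}$ to reduce the sign claim to the maximal minors of the $(d{-}k{+}1)\times(d{-}k)$ matrix $U_\Delta$ (the paper phrases this as ``the adjoint of a simplex has coefficients of one sign'', which is equivalent to your positive-dependence Cramer argument), and both obtain the inequality and the wall description \eqref{eq:wallrel} from convexity of $\phi$ on $\Sigma$. Your convexity step via the descended function $\overline\phi$ on $\Sigma_{\sigma_\Delta}$ is spelled out more explicitly than the paper's one-line appeal to ``convexity along $\sigma_\Delta$'', but the underlying argument is identical.
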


\begin{proof}
    If $\tilde{U} \in \mathbb{R}^{(\ell+1) \times \ell}$ is the ray matrix of the normal fan of an $\ell$-dimensional simplex, then $\det ( \tilde{U} ~ y)$ is a linear form in $y$ whose coefficients all have the same sign. Indeed, up to sign, it is the universal adjoint of the simplex (Example \ref{ex:simpleex}). We apply this in our situation.
    
    Below Corollary \ref{cor:resamp}, we defined $T_\Delta$ to be a matrix for which $U_{\sigma_\Delta} \cdot T_\Delta$ has rows $e_1, \ldots, e_k$. The last $d-k$ columns of $U_{{\rm nb}(\Delta)} \cdot T_\Delta$ form the matrix $U_\Delta$, which is a ray matrix for the normal fan of $\Delta$. We set $x_{\rho} = 0$ for $\rho \in \sigma_\Delta(1)$ in the matrix from \eqref{eq:WDelta} and observe that
    \begin{equation}  \label{eq:proof5.3} \det  \left ( \begin{pmatrix}
    U_{{\rm nb}(\Delta)} & x_{{\rm nb}(\Delta)}\\
    U_{\sigma_\Delta} & 0
\end{pmatrix} \cdot \begin{pmatrix}
    T_\Delta & 0 \\ 0 & 1
\end{pmatrix} \right ) \, = \, c \cdot \det \begin{pmatrix}
    U_\Delta & x_{{\rm nb}(\Delta)}
\end{pmatrix} \end{equation}
for some nonzero constant $c$. By the above considerations, the resulting linear form has coefficients of constant sign. That linear form is, up to a constant factor, equal to $\sum_{\rho \in {\rm nb}(\Delta)} c_\Delta^{\, \rho} \, x_{\rho}$. 

The inequality ${\rm sign}(\Delta) \cdot W_\Delta(x) \geq 0$ is the condition for the function $\phi \in {\rm PL}(\Sigma)$ represented by $x \in \mathbb{R}^n$ to be convex along $\sigma_\Delta$, so it must be satisfied for $x \in {\rm Def}(\Sigma)$. The claim \eqref{eq:wallrel} follows from the fact that it suffices to check convexity along the $(d-1)$-dimensional cones, which are the boundaries of the domains of linearity of $\phi$. The conditions ${\rm sign}(e) \cdot W_e(x) \geq 0$ are called \emph{wall-crossing inequalities}, see for instance \cite[Section 6.1]{CoxLittleSchenck2011} and \cite[Theorem 1.6]{cox2009primitive}.
\end{proof}

We note that a wall-crossing inequality ${\rm sign}(e) \cdot W_e(x) \geq 0$ does not necessarily define a facet of ${\rm Def}(\Sigma)$, see Example \ref{ex:cuboid}. Below we write $x_{\sigma_\Delta} = 0$ as a shorthand for ``$x_\rho = 0$ for all $\rho \in \sigma_\Delta(1)$''. The \emph{$(n-1)$-skeleton} ${\rm Ch}(U)_{n-1}$ of the chamber complex is the union of its $(n-1)$-dimensional cones. Let $\overline{{\rm Ch}(U)}_{n-1} \subset \mathbb{P}^{n-1}$ be the projectivization of its Zariski~closure.

\begin{proposition} \label{prop:linspacefromcham}
     Let $\Delta \subseteq P$ be a face of $P$ which is a simplex of dimension $d-k>0$. The $(n-k-2)$-dimensional linear space $W_{\Delta}(x) = x_{\sigma_\Delta} = 0$ is contained in ${\cal A}_\Sigma \cap \overline{{\rm Ch}(U)}_{n-1}$. 
\end{proposition}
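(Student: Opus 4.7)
The plan is to verify the two inclusions $L \subseteq {\cal A}_\Sigma$ and $L \subseteq \overline{{\rm Ch}(U)}_{n-1}$ separately, where $L = \{x \in \mathbb{P}^{n-1} : W_\Delta(x) = 0,\, x_{\sigma_\Delta} = 0\}$. Since $L$ is cut out by the $k$ coordinate conditions from $\sigma_\Delta(1)$ together with one extra linear relation, the dimension count yields $\dim L = n-k-2$, as claimed.

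For the first inclusion I would apply Lemma \ref{lem:restrictpolytope} to restrict ${\rm Adj}_P$ to $\Lambda_\Delta = V(x_\rho : \rho \in \sigma_\Delta(1))$. The restriction is a nonzero scalar times $\big(\prod_{Q \cap \Delta = \emptyset} x_Q\big) \cdot {\rm Adj}_\Delta$, and since $\Delta$ is a simplex of dimension $d-k > 0$, Example \ref{ex:simpleex} identifies ${\rm Adj}_\Delta$ with the linear form $\pm \det \begin{pmatrix} U_\Delta & x_{{\rm nb}(\Delta)} \end{pmatrix}$. The identity \eqref{eq:proof5.3} that was established in the proof of the previous proposition equates this determinant, up to a nonzero scalar, with the restriction of $W_\Delta(x)$ to $\Lambda_\Delta$. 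So imposing $W_\Delta = 0$ in addition to $x_{\sigma_\Delta} = 0$ forces ${\rm Adj}_P$ to vanish, giving $L \subseteq {\cal A}_\Sigma$.

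For the second inclusion I would recognize $\{W_\Delta = 0\}$ as a wall hyperplane of the chamber complex. The observations are that $W_\Delta(x) = \det \begin{pmatrix} U_K & x_K \end{pmatrix}$ with $K = \sigma_\Delta(1) \cup {\rm nb}(\Delta)$, that $|K| = d+1$, and that ${\rm rank}(U_K) = d$: the rays of $\sigma_\Delta$ span $k$ dimensions, while the images of ${\rm nb}(\Delta)$ in $\mathbb{R}^d/{\rm span}_\mathbb{R}(\sigma_\Delta)$ are the rays of the normal fan of the $(d-k)$-dimensional simplex $\Delta$ and span the remaining $d-k$. Thus $K$ is a circuit of the linear matroid on the rows of $U$. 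Picking $j \in K$ such that $U_K$ with row $j$ deleted still has rank $d$, and setting $I = (\{1,\ldots,n\} \setminus K) \cup \{j\}$, a block expansion shows that $\det \begin{pmatrix} \{e_i\}_{i\in I} & U \end{pmatrix}$ equals, up to sign, the nonvanishing $d \times d$ minor of $U_K$ obtained by removing row $j$. Hence $C_I = \tilde C_I + {\rm im}(U)$ is $n$-dimensional, and its facet $C_{I \setminus \{j\}} = \tilde C_{I \setminus \{j\}} + {\rm im}(U)$ is $(n-1)$-dimensional and spans precisely $\{W_\Delta = 0\}$. Passing to the common refinement, $C_{I \setminus \{j\}}$ decomposes into cones of $\mathrm{Ch}(U)$, the top-dimensional ones still $(n-1)$-dimensional, so $\{W_\Delta = 0\} \subseteq \overline{{\rm Ch}(U)}_{n-1}$ and consequently $L \subseteq \overline{{\rm Ch}(U)}_{n-1}$.

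The main obstacle will be the second step: \emph{a priori}, $\{W_\Delta = 0\}$ is merely a natural circuit hyperplane determined by minors of $U$, and one must certify that it is actually the span of an $(n-1)$-dimensional cone of $\mathrm{Ch}(U)$ rather than a spurious hyperplane of the arrangement. The explicit indices $j$ and $I$ pin this down by exhibiting $\{W_\Delta = 0\}$ as a facet hyperplane of a concrete $n$-dimensional cone $C_I$ in the coarser fan generated by the $C_I$'s.
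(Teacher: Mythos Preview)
Your proof is correct and follows essentially the same approach as the paper. For the first inclusion both you and the paper combine Lemma~\ref{lem:restrictpolytope} with the determinantal identity~\eqref{eq:proof5.3}; for the second inclusion the paper simply observes that the hyperplane $\{W_\Delta=0\}$ equals ${\rm span}_{\mathbb{R}}(e_\rho:\rho\in K_\Delta)+{\rm im}(U)$ with $K_\Delta=\Sigma(1)\setminus(\sigma_\Delta(1)\cup{\rm nb}(\Delta))$ (your $I\setminus\{j\}$) and asserts this is the span of an $(n-1)$-dimensional chamber-complex cone, whereas you supply the dimension verification via the circuit argument and the explicit index~$j$ --- a welcome elaboration of what the paper leaves implicit.
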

\begin{proof}
    From Equation \eqref{eq:proof5.3} one sees that $(W_{\Delta}(x))_{|\Lambda_\Delta} = \pm c_\Delta \cdot {\rm Adj}_{\Delta, U_\Delta}(x)$. Here $\Lambda_\Delta = \{ x_{\sigma_\Delta} = 0 \}$ is as in \eqref{eq:LambdaDelta} and $c_\Delta = \det (T_\Delta)$. The containment in ${\cal A}_\Sigma$ follows from Lemma \ref{lem:restrictpolytope}. We need to show that the linear space $W_{\Delta}(x) = x_{\sigma_\Delta} = 0$ is also contained in $\overline{{\rm Ch}(U)}_{n-1}$. 
    The set $K_\Delta = \Sigma(1) \setminus ({\rm nb}(\Delta) \cup \sigma_\Delta(1))$ consists of $n-d-1$ rays. These are the rays indexing the variables which do not appear in $W_\Delta(x)$. 
    The hyperplane in $\mathbb{R}^n$ defined by $W_{\Delta}(x)=0$ is ${\rm span}_{\mathbb{R}}(e_\rho, \rho \in K_\Delta) + {\rm im}(U)$, the linear span of an $(n-1)$-dimensional cone in ${\rm Ch}(U)$.
\end{proof}

We have seen the linear spaces from Proposition \ref{prop:linspacefromcham} before: if $\Delta = e$ is an edge, then $W_{e}(x) = x_{\sigma_e} = 0$ is the linear space $\Lambda_e \cap H_e$ from Theorem \ref{thm:interpol}. Moreover, we have seen in Corollary \ref{cor:simplexfaces} that $[\Lambda_\Delta] \in {\rm Split}_{n-k-1}({\cal A}_\Sigma)$ for any simplex $\Delta$, and $W_{\Delta}(x) = x_{\sigma_\Delta} = 0$ defines a component of $\Lambda_\Delta \cap {\cal A}_\Sigma$. Proposition \ref{prop:linspacefromcham} relates these linear spaces to the chamber complex. 

\begin{example} \label{ex:Hepentagon} In Example \ref{ex:chambercomplex}, let $\Delta$ be the edge corresponding to the fourth row of $U$. The set $K_\Delta$ consists of the rays $\rho_1$ and $\rho_2$. The projection of the hyperplane $W_{\Delta}(x) = 0$ along ${\rm im}(U)$ is represented by the line connecting $e_1$ and $e_2$ in Figure \ref{fig:chambercomplexpentagon}. Repeating this for all edges, we find the five facet hyperplanes $W_{\Delta}(x) = 0$ of the deformation cone ${\rm Def}(\Sigma)$, shaded in blue in Figure \ref{fig:chambercomplexpentagon}. The interplay between the linear spaces $\Lambda_e \cap H_e$ from Theorem \ref{thm:interpol} and the chamber complex is nicely seen from the overlap of Figures \ref{fig:config} (right) and \ref{fig:chambercomplexpentagon} (left). 
\end{example}

We can use Proposition \ref{prop:linspacefromcham} to study the behavior of the adjoint polynomial ${\rm adj}_P(y)$ from \eqref{eq:adj} under deformations of $P$. 
For each face $\Delta \subseteq P$ which is a simplex of dimension $d-k > 0$ and each $x \in \mathbb{R}^n$ satisfying $W_\Delta(x) = 0$, we define $v_\Delta(x) \in \mathbb{R}^d$ as the unique point satisfying 
\[\begin{pmatrix}
    U_{{\rm nb}(\Delta)} & x_{{\rm nb}(\Delta)}\\
    U_{\sigma_\Delta} & x_{\sigma_\Delta} 
\end{pmatrix} \cdot \begin{pmatrix}
v_\Delta(x) \\ 1
\end{pmatrix} \, = \, 0. \]
This is well-defined, as the first block column of our $(d+1) \times (d+1)$ matrix has rank $d$. Suppose that, in addition to $W_\Delta(x) = 0$, we have $x \in {\rm Def}(\Sigma)$. Then $x$ corresponds to a convex piecewise linear function which is linear on the union of all $d$-dimensional cones in $\Sigma(d)$ containing $\sigma_\Delta$; it is given by $u \mapsto -u \cdot v_\Delta(x)$ on this union. This implies that $v_\Delta(x)$ is a vertex of $P_x$, and $P_x$ is a deformation of $P$ in which the face $\Delta$ shrinks to the vertex $v_\Delta(x)$. 

\begin{proposition} \label{prop:degenadjoint}
Let $\Delta \subseteq P$ be a face of $P$ which is a simplex of dimension $d-k>0$. Suppose that there exists a point $z_0 \in {\rm Def}(\Sigma)$ such that $W_\Delta(z_0) = 0$ and $P_{z_0}$ has dimension $d$. If $P_{z_0}$ has $n$ facets, then the adjoint polynomial ${\rm adj}_{P_{z_0}}(y)$ vanishes at any point $y$ satisfying $U_{\sigma_{\Delta}} \, y + (z_0)_{\sigma_{\Delta}} = 0$. In particular, it vanishes at the vertex $v_{\Delta}(z_0) \in P_{z_0}$. 
\end{proposition}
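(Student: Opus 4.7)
The plan is to combine Lemma \ref{lem:universaladjoint}, which identifies ${\rm adj}_{P_{z_0}}(y)$ with ${\rm Adj}_\Sigma(Uy + z_0)$, with Proposition \ref{prop:linspacefromcham}, which exhibits a linear subspace of ${\cal A}_\Sigma$ cut out by $W_\Delta(x) = 0$ and $x_{\sigma_\Delta} = 0$. Specifically, I would show that for every $y$ satisfying $U_{\sigma_\Delta} y + (z_0)_{\sigma_\Delta} = 0$, the point $x = Uy + z_0 \in \mathbb{R}^n$ lies in this subspace, so that ${\rm Adj}_\Sigma(Uy + z_0) = 0$. Lemma \ref{lem:universaladjoint} then translates this into the desired vanishing ${\rm adj}_{P_{z_0}}(y) = 0$, and the ``in particular'' claim is immediate because $v_\Delta(z_0)$ satisfies $U_{\sigma_\Delta} v_\Delta(z_0) + (z_0)_{\sigma_\Delta} = 0$ by definition.

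The condition $x_{\sigma_\Delta} = U_{\sigma_\Delta} y + (z_0)_{\sigma_\Delta} = 0$ is immediate from the hypothesis on $y$. For $W_\Delta(x) = 0$, the key observation is that $W_\Delta$ is invariant under the additive action of ${\rm im}(U)$: in the $(d+1) \times (d+1)$ matrix defining $W_\Delta(Uy + z_0)$ via \eqref{eq:WDelta}, subtracting $y_j$ times column $j$ from the last column for $j = 1, \ldots, d$ reduces the last column to $((z_0)_{{\rm nb}(\Delta)}, (z_0)_{\sigma_\Delta})^t$, so that the determinant equals $W_\Delta(z_0)$. Since $W_\Delta(z_0) = 0$ by hypothesis, we obtain $W_\Delta(x) = 0$. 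Hence $x$ lies in the linear subspace $\{W_\Delta = 0\} \cap \{x_{\sigma_\Delta} = 0\} \subseteq {\cal A}_\Sigma$ provided by Proposition \ref{prop:linspacefromcham}, and ${\rm Adj}_\Sigma(x) = 0$.

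The main technical point is justifying that Lemma \ref{lem:universaladjoint} applies at the boundary point $z_0 \in \partial {\rm Def}(\Sigma)$. Its statement assumes $z \in {\rm int}(C)$, but this hypothesis can be relaxed: the proof of Lemma \ref{lem:dualvolfromamp} already extends the identity ${\rm Amp}_\Sigma(Uy + z) = {\rm Vol}(P_z - y)^\circ$ to the closure of $C$ by continuity. Since $P_{z_0}$ is bounded, full-dimensional, and has $n$ facets, taking a sequence $z_k \to z_0$ with $z_k \in {\rm int}({\rm Def}(\Sigma))$ yields $P_{z_k} \to P_{z_0}$ in Hausdorff distance, so that ${\rm Vol}(P_{z_k} - y)^\circ \to {\rm Vol}(P_{z_0} - y)^\circ$ for $y$ in the interior of $P_{z_0}$. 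Passing to the limit in ${\rm Adj}_\Sigma(Uy + z_k) = \prod_\rho (u_\rho \cdot y + (z_k)_\rho) \cdot {\rm Vol}(P_{z_k} - y)^\circ$ then gives ${\rm Adj}_\Sigma(Uy + z_0) = {\rm adj}_{P_{z_0}}(y)$, completing the argument.
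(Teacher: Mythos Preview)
Your proof is correct and follows essentially the same approach as the paper's: both use Proposition~\ref{prop:linspacefromcham} to show that ${\rm Adj}_\Sigma$ vanishes at $x = Uy + z_0$ (via the invariance of $W_\Delta$ under translation by ${\rm im}(U)$ together with $x_{\sigma_\Delta}=0$), and both extend Lemma~\ref{lem:universaladjoint} to the boundary point $z_0$ by a continuity/limit argument along a path in ${\rm int}({\rm Def}(\Sigma))$. The paper states the limit identity in the slightly more general form \eqref{eq:proof5.7} (allowing $P_{z_0}$ to lose facets, in which case ${\rm adj}_{P_{z_0}}$ picks up linear factors) before specializing to the $n$-facet case, whereas you invoke the $n$-facet hypothesis directly; but the substance is the same.
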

\begin{proof}
Let $z: [0,1] \rightarrow {\rm Def}(\Sigma)$ be a smooth path, such that $z(0) =z_0$ and $z(t) \in {\rm int}({\rm Def}(\Sigma))$ for $t \in (0,1]$. Let $\Sigma_0(1) = \Sigma_{P_{z_0}}(1)$. By \eqref{eq:adj} we have that, for each $t \in (0,1]$,
    \[ {\rm Vol}(P_{z(t)}-y)^\circ \, = \, \frac{{\rm adj}_{P_{z(t)}}(y)}{\prod_{\rho \in \Sigma(1)}(u_{\rho} \cdot y + z_{\rho}(t) )} \quad \text{and} \quad {\rm Vol}(P_{z_0}-y)^\circ \, = \, \frac{{\rm adj}_{P_{z_0}}(y)}{\prod_{\rho \in \Sigma_0(1)}(u_{\rho} \cdot y + z_{\rho}(0)) } .\]
    We assume that our path is such that $\bigcap_{t \in [0,1]} {\rm int}(P_{z(t)})$ contains a small open ball ${\cal B} \subset \mathbb{R}^d$. By continuity of the dual volume, the limit $\lim_{t \rightarrow 0}{\rm Vol}(P_{z(t)}-y)^\circ $ agrees with ${\rm Vol}(P_{z_0}-y)^\circ$ for $y \in {\cal B}$. Since ${\cal B}$ has dimension $d$, this implies an equality of rational functions in $y$:
    \[ \lim_{t \rightarrow 0} \,  \frac{{\rm adj}_{P_{z(t)}}(y)}{\prod_{\rho \in \Sigma(1)}(u_{\rho} \cdot y + z_{\rho}(t) )} \, = \, \frac{{\rm adj}_{P_{z_0}}(y)}{\prod_{\rho \in \Sigma_0(1)}(u_{\rho} \cdot y + z_{\rho}(0))} .\]
Notice that on the righthand side, the product in the denominator is over all rays which represent a facet of $P_{z_0}$. Comparing these two formulas we find that,  for all $y$, 
    \[ \lim_{t \rightarrow 0} {\rm adj}_{P_{z(t)}}(y) \, = \, {\rm adj}_{P_{z_0}}(y) \cdot \Big (\prod_{\rho \in \Sigma(1) \setminus \Sigma_0(1)} u_{\rho} \cdot y + z_{\rho}(0) \Big). \]
    If $P_{z(t)}$ loses a facet for $t \rightarrow 0$, then the adjoint splits off a linear factor. By Lemma \ref{lem:universaladjoint},
    \begin{equation} \label{eq:proof5.7} \lim_{t \rightarrow 0} {\rm Adj}_\Sigma(U \,y + z(t)) \, = \, {\rm Adj}_\Sigma(U \, y + z_0) \, = \, {\rm adj}_{P_{z_0}}(y)\cdot \Big (\prod_{\rho \in \Sigma(1) \setminus \Sigma_0(1)} u_{\rho} \cdot y + z_{\rho}(0) \Big).\end{equation}
The point $U \, y + z_0 \in \mathbb{R}^n$ lies on the hyperplane $W_{\Delta}(x) = 0$ for any $y$, because that hyperplane contains ${\rm im}(U)$. If $U_{\sigma_{\Delta}} \, y + (z_0)_{\sigma_{\Delta}} = 0$, then $U \, y + z_0$ satisfies the equations $W_{\Delta}(x) = x_{\sigma_\Delta} = 0$, and ${\rm Adj}_\Sigma(U \, y + z_0) = 0$ by Proposition \ref{prop:linspacefromcham}. Under the assumption that $P_{z_0}$ has $n$ facets, we have $\Sigma(1) \setminus \Sigma_0(1) = \emptyset$ and \eqref{eq:proof5.7} implies that ${\rm adj}_{P_{z_0}}(y) = 0$ when $U_{\sigma_{\Delta}} \, y + (z_0)_{\sigma_{\Delta}} = 0$. 
\end{proof}

\begin{example} \label{ex:cuboid}
We consider the simple polytope $P = \{ y \in \mathbb{R}^3\, : \, U \, y + z \geq 0 \}$ given by 
\[ U \, = \, \left (  \begin{matrix} -8 &10 & -4 & 2 & 0 & 0 \\ -11 &  12 &  -3 & 0 & 2 & 0 \\  -7 &6 &  -1 & 0 &  0 & 2\end{matrix} \normalsize \right )^t, \quad z \, = \, \begin{pmatrix} 479 &  336 &  69 &  78 &  208 &  78 \end{pmatrix}^t. \]
Its normal fan is $\Sigma$. Two of the facets of $P$ are triangles, two are quadrilaterals, and two are pentagons. The chamber complex ${\rm Ch}(U)$ modulo ${\rm im}(U)$ is the cone over the hexagon in Figure \ref{fig:d3n6}. The point $z$ lies in the grey pentagonal cell, which is ${\rm Def}(\Sigma) \in {\rm Ch}(U)$. The figure also shows the Schlegel diagram with respect to the quadrilateral facet of $\rho_2 \in \Sigma(1)$ for three different cells of ${\rm Ch}(U)$. 
\begin{figure}
\centering
\includegraphics[height = 5cm]{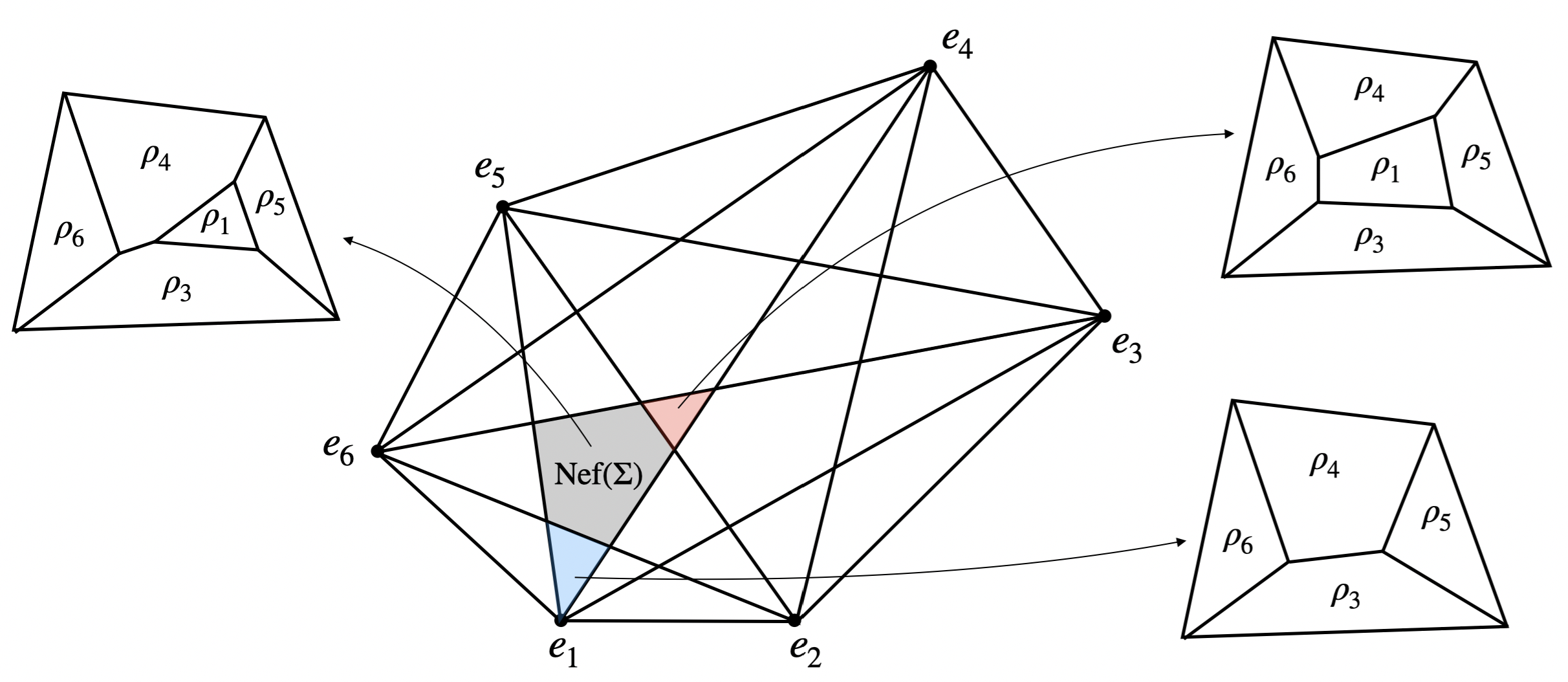}
\caption{The chamber complex of a simple $3$-polytope.}
\label{fig:d3n6}
\end{figure}
The adjoint ${\rm Adj}_{\Sigma}$ is a cubic with eight terms, one for each vertex: 
\[
12 \, x_1x_2x_5 + 48 \,  x_1x_3x_5 + 24 \,  x_1x_3x_6 + \,  36 \, x_1x_4x_5 
+ 28 \, x_1x_4x_6 + 28 \,  x_2x_3x_6 + 40 \, x_2x_4x_6 + 20 \, x_2x_5x_6. \]
As predicted by Proposition \ref{prop:linspacefromcham}, it vanishes on the following linear spaces: 
\[\det \left (\begin{smallmatrix}
-4 & -3 & -1 & x_3\\
2 & 0 & 0 & x_4 \\ 
0 & 2 & 0 & x_5 \\ 
-8 & -11 & -7 & x_1 
\end{smallmatrix} \right )\, = \, x_1 \, = \, 0, \quad \, \det \left ( \begin{smallmatrix}
-8 & -11 & -7 & x_1 \\
0 & 0 & 2 & x_6 \\ 
-4 & -3 & -1 & x_3\\
2 & 0 & 0 & x_4 \\ 
\end{smallmatrix} \right ) \, = \, x_3 \, = \, x_4 \, = \, 0.\]
The first is $W_\Delta(x) = x_{\sigma_{\Delta}} = 0$ for the triangular facet $\Delta$ corresponding to $\rho_1$, and the second corresponds to the edge formed by $\rho_3, \rho_4$. There are $12$ more such linear spaces: $11$ for the other edges, and one for the triangle $\rho_6$. 
The $2$-plane corresponding to the edge $\rho_3, \rho_5$ projects to the cone generated by $e_4, e_6$ in Figure \ref{fig:d3n6}. Note that this is not a facet hyperplane of ${\rm Def}(\Sigma)$. 

The point $z_0 = (7,0,1,0,0,2)$ lies on the boundary of the deformation cone ${\rm Def}(\Sigma)$ corresponding to the blue cell in Figure \ref{fig:d3n6}. Substituting $x = U\, y + z_0$ in ${\rm Adj}_\Sigma$, we find that the quadratic adjoint ${\rm adj}_{P_{z}}$ factors into two linear forms when $z \rightarrow z_0$. One of these linear forms is $u_1 \cdot y + (z_0)_1 = -8 \, y_1 -11 \, y_2 - 7 \, y_3 + 7$, as predicted by \eqref{eq:proof5.7}. The other linear form defines the adjoint plane of $P_{z_0}$. Using $z_1 = (33,-26,9,0,0,0)$ instead, we find that the quadratic surface defined by ${\rm adj}_{P_{z_1}} = -16 \cdot (120 \, y_1 y_2  - 38 \, y_1 y_3  + 165 \, y_2 ^2 + 79 \, y_2 y_3  - 495 \, y_2  + 8 \, y_3 ^2 - 72 \, y_3)$ passes through the non-simple vertex $y = (0,3,0)$ defined by $\rho_1,\rho_3, \rho_4,\rho_6$. The point $z_1$ lies on the facet of ${\rm Def}(\Sigma)$ shared with the red triangle in Figure \ref{fig:d3n6}.
\end{example}

\section{Singular locus} \label{sec:singlocus}

We work with a simplicial fan $\Sigma$ in $\mathbb{R}^d$ which, for now, is not necessarily complete or projective. The singular locus ${\rm Sing}({\cal A}_\Sigma) \subset {\cal A}_\Sigma$ is defined by $n$ equations of degree $n-d-1$: 
\begin{equation} \label{eq:Sing} {\rm Sing}({\cal A}_\Sigma) \, = \, \Big \{ x \in \mathbb{P}^{n-1} \, : \, \frac{\partial {\rm Adj}_\Sigma(x)}{\partial x_\rho} \, = \, 0, \text{ for all } \rho \in \Sigma(1) \Big \}. \end{equation}
We have seen a couple of scenarios in which ${\rm Adj}_\Sigma$ is reducible, see Lemma \ref{lem:factorfans} and Proposition \ref{prop:coordinatehyperplanes}. In these cases, one can easily find $(n-3)$-dimensional components of ${\rm Sing}({\cal A}_\Sigma)$ by intersecting irreducible components of ${\cal A}_\Sigma$. The rest of the singular locus consists of the singular loci of individual factors. We illustrate this with an example.    

\begin{example} \label{ex:threefactors}
We consider the fan from Example \ref{ex:noncomplete}, whose universal adjoint defines a cubic four-fold ${\cal A}_\Sigma \subset \mathbb{P}^5$, see Equation \eqref{eq:AdjIncomplete}. We have ${\rm Adj}_\Sigma \, = \, x_6 \cdot ( x_1+x_3) \cdot (x_2 + x_4)$.
By Proposition \ref{prop:coordinatehyperplanes}, the factor $x_6$ is seen from $\Sigma(1) \setminus \overline{\Sigma}(1) = \{ \rho_6 \}$. The other two factors form the universal adjoint of the star fan $\Sigma_{\rho_5}$, which is a product of two one-dimensional fans, see Lemma \ref{lem:factorfans}. The singular locus of ${\cal A}_\Sigma$ consists of three $3$-planes: 
\[ {\rm Sing}({\cal A}_\Sigma) \, = \, V(x_6, x_1 + x_3) \cup V(x_6, x_2 + x_4) \cup V(x_1+x_3,x_2+x_4). \qedhere \]
\end{example}

The following simple observation implies that ${\cal A}_\Sigma$ is singular when $n-d \geq 3$.

\begin{proposition}
    For each subset $J \subseteq \Sigma(1)$ with $|J| \geq d+2$, we have $\Lambda_J \subseteq {\rm Sing}({\cal A}_\Sigma)$.
\end{proposition}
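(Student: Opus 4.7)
The plan is to compute the partial derivatives of ${\rm Adj}_\Sigma$ explicitly and show that every monomial that appears must already vanish on $\Lambda_J$, by a simple cardinality argument. Since $\Sigma$ is simplicial, $|\sigma(1)| = d$ for every $\sigma \in \Sigma(d)$, and every monomial in ${\rm Adj}_\Sigma$ is squarefree.

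First, I would differentiate \eqref{eq:Adj} termwise. A term $|\det U_\sigma| \cdot \prod_{\rho' \notin \sigma(1)} x_{\rho'}$ contributes to $\partial/\partial x_\rho$ exactly when $\rho \notin \sigma(1)$, giving
$$\frac{\partial {\rm Adj}_\Sigma}{\partial x_\rho} \, = \, \sum_{\substack{\sigma \in \Sigma(d)\\ \rho \notin \sigma(1)}} |\det U_\sigma| \cdot \prod_{\rho' \notin \sigma(1) \cup \{\rho\}} x_{\rho'},$$
so each monomial uses precisely the $n - d - 1$ variables indexed by $\Sigma(1) \setminus (\sigma(1) \cup \{\rho\})$.

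Next I would argue that such a monomial vanishes on $\Lambda_J$ unless its index set is disjoint from $J$, i.e., unless $J \subseteq \sigma(1) \cup \{\rho\}$. But $|\sigma(1) \cup \{\rho\}| \leq d+1 < d+2 \leq |J|$, so this inclusion is impossible. Hence every monomial appearing in $\partial {\rm Adj}_\Sigma/\partial x_\rho$ contains a factor $x_{\rho''}$ with $\rho'' \in J$, and the partial derivative vanishes identically on $\Lambda_J$. The same counting shows ${\rm Adj}_\Sigma$ itself vanishes on $\Lambda_J$ (one can also invoke Euler's identity on the homogeneous polynomial ${\rm Adj}_\Sigma$ of degree $n-d > 0$). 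Combining these facts with the defining equations \eqref{eq:Sing} of the singular locus gives $\Lambda_J \subseteq {\rm Sing}({\cal A}_\Sigma)$.

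There is no real obstacle in this proof; the whole content is the pigeonhole bound $|\sigma(1) \cup \{\rho\}| \leq d+1$, which is where the hypothesis $|J| \geq d+2$ enters crucially. I would expect the argument to fit in a few lines.
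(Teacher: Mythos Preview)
Your proof is correct and is essentially the same argument as the paper's, just spelled out in more detail. The paper compresses your computation into the single observation that the partial derivatives of ${\rm Adj}_\Sigma$ are sums of squarefree monomials of degree $n-d-1$, and any such monomial must vanish when at least $d+2$ of the $n$ coordinates are set to zero; your explicit pigeonhole bound $|\sigma(1)\cup\{\rho\}|\le d+1$ is exactly what underlies that sentence.
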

\begin{proof}
    Since ${\rm Adj}_\Sigma$ has only squarefree monomials, so do its partial derivatives. These are the defining equations of ${\rm Sing}({\cal A}_\Sigma)$, see \eqref{eq:Sing}. Squarefree monomials of degree $n-d-1$ vanish trivially when at least $ d+2$ out of $n$ coordinates are zero, which implies the proposition.   
\end{proof}

Recall that $\Sigma^c$ is the set of subsets of $\Sigma(1)$ which do not generate a cone of $\Sigma$. The minimal elements of $\Sigma^c$ are called \emph{primitive collections}. We encountered these in Section \ref{sec:fano}. Let $J \in \Sigma^c$ be a primitive collection. We investigate the intersection of ${\rm Sing}({\cal A}_\Sigma)$ with the coordinate subspace $\Lambda_J = V(x_\rho \, : \, \rho \in J)$. We start from the following observation: 
\begin{equation} \label{eq:cases1} \Big ( \frac{\partial {\rm Adj}_\Sigma}{\partial x_\rho} \Big )_{|\Lambda_J} \, = \, \begin{cases}
    \frac{\partial}{\partial x_\rho} \Big ( ({\rm Adj}_{\Sigma})_{|\Lambda_J} \Big ) & \text{ if } \rho \notin J \\
    \frac{\partial}{\partial x_\rho} \Big ( ({\rm Adj}_{\Sigma})_{|\Lambda_{J \setminus \{ \rho \}}} \Big ) & \text{ if } \rho \in J \\
\end{cases}.\end{equation}
The expression for $\rho \in J$ follows from the fact that all monomials of ${\rm Adj}_\Sigma$ are squarefree. By Proposition \ref{prop:containedinB} and the fact that $\Lambda_J$ is an irreducible component of $Z(\Sigma)$, the restriction $({\rm Adj}_\Sigma)_{|\Lambda_J}$ is zero. Since $J \in \Sigma^c$ is a primitive collection, we have that for each $\rho \in J$, the set $J \setminus \{ \rho \} \subset \Sigma(1)$ generates a cone $\tau_\rho \in \Sigma$ of dimension $|J|-1$. Hence, we have $\Lambda_{J \setminus \{ \rho \}} = \Lambda_{\tau_\rho}$, where $\Lambda_{\tau_\rho}$ is as in Equation \eqref{eq:Lambdatau}. The restriction of ${\rm Adj}_\Sigma$ to $\Lambda_{\tau_\rho}$ was derived in Lemma \ref{lem:restrict}: 
\[ ({\rm Adj}_{\Sigma})_{|\Lambda_{\tau_\rho}} \, = \, c_{\tau_\rho}^{-1} \cdot \Big ( \prod_{\substack{\rho' \notin {\rm nb}(\tau_\rho) \\ \rho' \not \in \tau_\rho(1)} }  x_{\rho'} \Big ) \cdot {\rm Adj}_{\Sigma_{\tau_\rho},U_{\tau_\rho}}. \]
Clearly $\rho \notin \tau_\rho(1)$ and, since $J$ is primitive, we have $\rho \notin {\rm nb}(\tau_\rho)$. We conclude that \eqref{eq:cases1} gives
\begin{equation} \label{eq:cases2} \Big ( \frac{\partial {\rm Adj}_{\Sigma,U}}{\partial x_\rho} \Big )_{|\Lambda_J} \, = \, \begin{cases}
    0 & \text{ if } \rho \notin J \\
    c_{\tau_\rho}^{-1} \cdot \Big ( \prod_{\substack{\rho' \notin {\rm nb}(\tau_\rho) \\ \rho' \not \in J} }  x_{\rho'} \Big ) \cdot {\rm Adj}_{\Sigma_{\tau_\rho},U_{\tau_\rho}} & \text{ if } \rho \in J \\
\end{cases}.\end{equation}
The above discussion leads immediately to a proof of the following statement. 
\begin{proposition} \label{prop:singirrel}
    Let $J \in \Sigma^c$ be a primitive collection. For $\rho \in J$, let $\tau_\rho \in \Sigma(|J|-1)$ be the cone generated by $J \setminus \{ \rho \}$. The intersection ${\rm Sing}({\cal A}_\Sigma) \cap \Lambda_J$ is given by $2\cdot |J|$ equations:
    \begin{equation} \label{eq:singprimitive} 
    x_\rho \, = \, 0 \quad \text{ and } \quad  \Big ( \prod_{\substack{\rho' \notin {\rm nb}(\tau_\rho) \\ \rho' \not \in J} }  x_{\rho'} \Big ) \cdot {\rm Adj}_{\Sigma_{\tau_\rho},U_{\tau_\rho}}  \, = \, 0 \quad \text{  for all  } \rho \in J.
    \end{equation}
    In particular, if $\Sigma$ is the normal fan of a simple $d$-dimensional polytope $P$, then $J$ is a set of facets of $P$. For $Q \in J$, let $\Delta_Q = \bigcap_{Q' \in J \setminus \{ Q\}} Q'$. The variety ${\rm Sing}({\cal A}_P) \cap \Lambda_J$ is given by
    \begin{equation} \label{eq:singprimitivepolytope} x_Q \, = \, 0 \quad \text{ and } \quad  \Big ( \prod_{\substack{Q'\cap \Delta_Q = \emptyset \\ Q' \not \in J} }  x_{Q'} \Big ) \cdot {\rm Adj}_{{\Delta_Q},U_{\Delta_Q}}  \, = \, 0 \quad \text{  for all  } Q \in J. \end{equation}
\end{proposition}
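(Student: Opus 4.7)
The plan is to read the proposition directly off the computations in \eqref{eq:cases1}--\eqref{eq:cases2} that immediately precede the statement. Since ${\rm Sing}({\cal A}_\Sigma)$ is cut out by the $n$ partial derivatives of ${\rm Adj}_\Sigma$, the intersection ${\rm Sing}({\cal A}_\Sigma) \cap \Lambda_J$ is defined by the $|J|$ coordinate equations $x_\rho = 0$ for $\rho \in J$ together with the restrictions to $\Lambda_J$ of those $n$ partials. The two-line case distinction in \eqref{eq:cases2} shows that this second batch collapses to at most $|J|$ nontrivial equations, which combined with the coordinate equations gives the $2|J|$ equations in \eqref{eq:singprimitive}.

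First, I would verify the $\rho \notin J$ branch: each monomial $x^{\hat\sigma} = \prod_{\rho'\notin\sigma(1)} x_{\rho'}$ of ${\rm Adj}_\Sigma$ survives on $\Lambda_J$ only if $J \subseteq \sigma(1)$, which is ruled out by $J \in \Sigma^c$. Hence $({\rm Adj}_\Sigma)_{|\Lambda_J}=0$, and \eqref{eq:cases1} then yields $(\partial {\rm Adj}_\Sigma/\partial x_\rho)_{|\Lambda_J} = 0$ for every $\rho \notin J$, so these impose no condition. For $\rho \in J$, primitivity of $J$ implies $\tau_\rho \in \Sigma(|J|-1)$ and, crucially, $\rho \notin {\rm nb}(\tau_\rho)$: otherwise $\tau_\rho(1) \cup \{\rho\} = J$ would be the ray set of a cone in $\Sigma$, contradicting $J \in \Sigma^c$. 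Applying Lemma \ref{lem:restrict} at $\tau_\rho$ writes $({\rm Adj}_\Sigma)_{|\Lambda_{\tau_\rho}}$ as a constant times the squarefree monomial $\prod_{\rho' \notin {\rm nb}(\tau_\rho) \cup \tau_\rho(1)} x_{\rho'}$ (which contains $x_\rho$ as a factor, since $\rho \notin {\rm nb}(\tau_\rho) \cup \tau_\rho(1)$) times ${\rm Adj}_{\Sigma_{\tau_\rho}, U_{\tau_\rho}}$. Differentiating in $x_\rho$ and using $\tau_\rho(1) = J \setminus \{\rho\}$ to rewrite the index set of the surviving prefactor as $\{\rho' \notin {\rm nb}(\tau_\rho),\, \rho' \notin J\}$ reproduces the second branch of \eqref{eq:cases2}.

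For the polytope case, I translate via the standard dictionary between the normal fan $\Sigma_P$ and the face lattice of $P$: a primitive collection $J$ corresponds to a set of facets of $P$ with $\bigcap_{Q \in J} Q = \emptyset$ but every proper sub-intersection a nonempty face. For $Q \in J$ with corresponding ray $\rho$, the face $\Delta_Q = \bigcap_{Q' \in J \setminus \{Q\}} Q'$ has normal cone $\tau_\rho$, whence ${\rm Adj}_{\Sigma_{\tau_\rho}, U_{\tau_\rho}} = {\rm Adj}_{\Delta_Q, U_{\Delta_Q}}$; and a facet $Q'$ outside $J$ satisfies $Q' \cap \Delta_Q = \emptyset$ exactly when its ray $\rho'$ lies outside ${\rm nb}(\tau_\rho)$. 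Substituting turns \eqref{eq:singprimitive} into \eqref{eq:singprimitivepolytope}.

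The only subtlety I anticipate is the index-set bookkeeping in the second bullet: verifying that the product in Lemma \ref{lem:restrict}, after removing the factor $x_\rho$, matches the product appearing in \eqref{eq:cases2}, i.e.\ the identity $(\Sigma(1)\setminus({\rm nb}(\tau_\rho) \cup \tau_\rho(1))) \setminus \{\rho\} = \{\rho' \in \Sigma(1) : \rho' \notin {\rm nb}(\tau_\rho),\, \rho' \notin J\}$. This reduces to $\tau_\rho(1) \cup \{\rho\} = J$, which is the definition of $\tau_\rho$. Everything else is formal.
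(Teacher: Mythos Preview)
Your proposal is correct and follows essentially the same argument as the paper: the proof is precisely the reading of \eqref{eq:cases1}--\eqref{eq:cases2} that you describe, and the paper's own proof consists of the single sentence ``The above discussion leads immediately to a proof of the following statement.'' Your treatment of the polytope case via the normal fan/face lattice dictionary is likewise exactly how the paper implicitly obtains \eqref{eq:singprimitivepolytope} from \eqref{eq:singprimitive} through Lemma~\ref{lem:restrictpolytope}.
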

Notice that the algebraic variety defined by \eqref{eq:singprimitive} (or \eqref{eq:singprimitivepolytope}) visibly splits up into several simple components, which makes decomposing the corresponding ideal in $R_\Sigma$ relatively easy. 

\begin{example}
    The set $J = \{ \rho_5, \rho_6 \}$ is a primitive collection for the fan $\Sigma$ in Example \ref{ex:noncomplete}. We have $\tau_{\rho_6} = \rho_5$ and ${\rm nb}(\rho_5) \setminus J = \emptyset$. The formula \eqref{eq:cases2} gives 
    $\frac{\partial {\rm Adj}_{\Sigma,U}}{\partial x_6} \, = \, {\rm Adj}_{\Sigma_{\rho_5}, U_{\rho_5}} \, = \, x_3x_4 + x_1x_4 + x_1x_2 + x_2x_3 \, = \, 0 $,    which is checked from \eqref{eq:AdjIncomplete}. We also have $\tau_{\rho_5} = \rho_6$, whose star fan has no full-dimensional cones, so ${\rm Adj}_{\Sigma_{\rho_6}, U_{\rho_6}} = 0$. The equations \eqref{eq:singprimitive} read $x_5 = x_6 = (x_1+x_3)(x_2+x_4) = 0$. This defines a union of two planes, given by ${\rm Sing}({\cal A}_\Sigma) \cap \{ x_5 = x_6 = 0\}$ by Example \ref{ex:threefactors}. 
\end{example}

\begin{example} \label{ex:hexagon}
    Consider the hexagon $P \subset \mathbb{R}^2$ with normal fan given by 
    \[ U \, = \, \begin{pmatrix}
        e_1 & e_1+e_2 & e_2 & -e_1 & -e_1 -e_2 & -e_2\end{pmatrix}^t \, \, \in \, \mathbb{R}^{6 \times 2}.\]
    The universal adjoint ${\cal A}_P$ is a quartic hypersurface in $\mathbb{P}^5$. The set $J = \{ \rho_1, \rho_5\}$ is a primitive collection, and the equations \eqref{eq:singprimitive} read $x_1 \, = \, x_2x_3(x_4+x_6) \, = \, x_5 \, = \, x_3x_4(x_2+x_6) \, = \, 0$. This defines a union of a plane $\{x_1 = x_3 = x_5 = 0\}$ and four lines in $\mathbb{P}^5$. Repeating this for the other eight primitive collections, we obtain 30 lines and two planes in total. All of these are contained in ${\rm Sing}({\cal A}_P)$. A computation in \texttt{Oscar.jl} \cite{OSCAR,mathrepo} shows that this constitutes the full singular locus. We shall prove that this is no coincidence (Theorem \ref{thm:singngon}). 
\end{example}

\begin{example} \label{ex:singassoc}
    Let $P \subset \mathbb{R}^3$ be the associahedron in Figure \ref{fig:ABHY3}. We have computed in Section \ref{subsec:assoc} that ${\rm Sing}({\cal A}_P) \cap Z(\Sigma)$ has 133 irreducible components. The nonlinear components have degree three or seven. The twelve components of degree three are explained as follows. A pentagonal facet of $P$ does not intersect one of the quadrilateral facets. Each such a pair forms a primitive collection $J$, e.g., $J = \{ Q_{36}, Q_{15}\}$. The intersection ${\rm Sing}({\cal A}_P) \cap \Lambda_J$ is 
    \[\renewcommand\arraystretch{1.2} \begin{matrix} x_{36} \, = \, x_{15} \, = \,  x_{14}   x_{25}  x_{24}  (x_{35} + x_{46})     (x_{13} + x_{26}) \, = \, 0\\
   x_{46}   x_{26} (x_{13} x_{14} x_{24} + x_{13} x_{14} x_{35} + x_{13} x_{25} x_{35} + x_{14} x_{24} x_{25} + x_{24} x_{25} x_{35})    \, = \, 0 \end{matrix}\]
   This involves the reducible quadratic adjoint of $Q_{36}$, and the cubic pentagonal adjoint of $Q_{15}$. There are 20 four-dimensional components, 18 of which are linear. The two degree-three components are $x_{36} \, = \, x_{15} \, = \, {\rm Adj}_{Q_{36}} \, = \, {\rm Adj}_{Q_{15}} \, = \, 0$. Repeating this for each pentagon gives twelve cubic four-folds. The six components of degree seven in ${\rm Sing}({\cal A}_P)$ come from the six pairs of non-adjacent pentagonal facets. Such a pair also forms a primitive collection. For instance, for $J = \{Q_{46},Q_{35} \}$, Proposition \ref{prop:singirrel} gives the following equations for ${\rm Sing}({\cal A}_P) \cap \Lambda_J$: 
   \[ x_{46} \, = \, x_{35} \, = \, x_{15}      x_{25} \, {\rm Adj}_{Q_{46}} \, = \, 
   x_{14}  x_{24} \, {\rm Adj}_{Q_{35}} \, = \, 0.   \]
   This contains the degree-nine four-fold $x_{46} \, = \, x_{35} \, = \, {\rm Adj}_{Q_{46}} \, = \,  {\rm Adj}_{Q_{35}} \, = \, 0$, which is checked to decompose into two four-planes and one component of degree seven. 
\end{example}

We proceed by studying singular points of ${\cal A}_\Sigma$ which are not contained in $\Lambda_J$ for any primitive collection. That is, we want to characterize points $x \in {\rm Sing}({\cal A}_\Sigma) \setminus Z(\Sigma)$. We work under the mild assumption that $\Sigma = \overline{\Sigma}$, with $\overline{\Sigma}$ as in Proposition \ref{prop:coordinatehyperplanes}. Define 
\[ \phi_\Sigma : \mathbb{P}^{n-1} \setminus Z(\Sigma) \longrightarrow \mathbb{P}^{|\Sigma(d)| -1}, \quad \text{with} \quad \phi_\Sigma(x) \, = \, (x^{\hat{\sigma}})_{\sigma \in \Sigma(d)}.\]
The coordinates of $\phi_\Sigma$ are the minimal generators of the irrelevant ideal $B(\Sigma)$ (here we need $\Sigma = \overline{\Sigma}$). The closure of the image of this map is a projective toric variety $Y_\Sigma \subseteq \mathbb{P}^{|\Sigma(d)| -1}$, which is \emph{not} the abstract normal toric variety $X_\Sigma$ usually associated to $\Sigma$. We define a matrix $M$ of size $|\Sigma(1)| \times |\Sigma(d)|$ whose rows and columns are indexed by the rays and $d$-dimensional cones of $\Sigma$ respectively. The entry $M_{\rho,\sigma}$ is $|\det(U_\sigma)|$ if $\rho \notin \sigma$, and $0$ otherwise. Here is an~example. 

\begin{example} \label{ex:MandY}
    For a pentagon and a hexagon, the matrix $M$ takes the following form:
    \begin{equation} \label{eq:MpentMhex} M_{\rm pent} \, = \, \begin{pmatrix}
        0 & u_{23} & u_{34} & u_{45} & 0 \\ 
        0 & 0 & u_{34} & u_{45} & u_{15} \\ 
        u_{12} & 0 & 0 & u_{45} & u_{15} \\
        u_{12} & u_{23} & 0 & 0 & u_{15} \\
        u_{12} & u_{23} & u_{34} & 0 & 0
    \end{pmatrix}, \quad M_{\rm hex} \, = \,  \begin{pmatrix}
        0 & u_{23} & u_{34} & u_{45} & u_{56} & 0 \\ 
        0 & 0 & u_{34} & u_{45} & u_{56} & u_{16} \\ 
        u_{12} & 0 & 0 & u_{45} & u_{56} & u_{16} \\
        u_{12} & u_{23} & 0 & 0 & u_{56} & u_{16} \\
        u_{12} & u_{23} & u_{34} & 0 & 0 & u_{16} \\
        u_{12} & u_{23} & u_{34} & u_{45} & 0 & 0
    \end{pmatrix}. \end{equation}
    Here $u_{ij} = | \det U_{ij}|$. In Examples \ref{ex:pentagonintro} and \ref{ex:hexagon}, all nonzero entries are one. The toric varieties $Y_\Sigma$ are $\mathbb{P}^4$ for the pentagon, and the hypersurface $\{ y_{12}y_{34}y_{56} = y_{16} y_{23}y_{45} \} \subset \mathbb{P}^5$ for the hexagon. Here $\mathbb{P}^5$ has homogeneous coordinates $(y_{12}: y_{23}: y_{34}: \cdots: y_{16})$ and $\phi_\Sigma$ is the map 
    \[ (x_1: \ldots, x_6) \, \longmapsto \, (x_3x_4x_5x_6: x_1x_4x_5x_6: \cdots: x_2x_3x_4x_5). \qedhere \]
\end{example}

Below, the homogeneous coordinates of a point $y = (y_\sigma)_{\sigma \in \Sigma(d)} \in \mathbb{P}^{|\Sigma(d)|-1}$ are indexed by the cones of $\Sigma(d)$. We may regard $y$ as a column vector of length $|\Sigma(d)|$ and write $M \cdot y$ for the matrix-vector product. The $n$ linear equations $M \cdot y = 0$ are well defined on $\mathbb{P}^{|\Sigma(d)|-1}$.

\begin{lemma} \label{lem:sing}
    If $\Sigma = \overline{\Sigma}$ and $x \in \mathbb{P}^{n-1}$ belongs to ${\rm Sing}({\cal A}_\Sigma) \setminus Z(\Sigma)$, then we have $M \cdot \phi_\Sigma(x) = 0$.
\end{lemma}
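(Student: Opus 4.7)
The plan is to establish the key identity
\[ x_\rho \cdot \frac{\partial {\rm Adj}_\Sigma}{\partial x_\rho}(x) \; = \; (M \cdot \phi_\Sigma(x))_\rho \qquad \text{for every } \rho \in \Sigma(1), \]
and then simply read off the lemma from it. So the bulk of the proof is verifying this identity at the level of polynomials in $R_\Sigma$.

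First I would unpack both sides. Recall ${\rm Adj}_\Sigma = \sum_{\sigma \in \Sigma(d)} |\det U_\sigma|\, x^{\hat\sigma}$, where every $x^{\hat\sigma}$ is a squarefree monomial. Hence $x_\rho \cdot \partial_{x_\rho} x^{\hat\sigma} = x^{\hat\sigma}$ if $\rho \notin \sigma$ and $0$ otherwise, simply because $x_\rho$ divides $x^{\hat\sigma}$ exactly when $\rho \notin \sigma$. Summing over $\sigma \in \Sigma(d)$ gives
\[ x_\rho \cdot \frac{\partial {\rm Adj}_\Sigma}{\partial x_\rho} \; = \; \sum_{\substack{\sigma \in \Sigma(d) \\ \rho \notin \sigma}} |\det U_\sigma|\, x^{\hat\sigma}. \]
On the other hand, by the very definition of the matrix $M$, the $\rho$-th entry of $M \cdot \phi_\Sigma(x)$ is $\sum_\sigma M_{\rho,\sigma}\, x^{\hat\sigma} = \sum_{\sigma : \rho \notin \sigma} |\det U_\sigma|\, x^{\hat\sigma}$, which coincides with the right-hand side above. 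This establishes the identity.

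Next I would deduce the lemma. Since $x \notin Z(\Sigma)$, the image $\phi_\Sigma(x) \in \mathbb{P}^{|\Sigma(d)|-1}$ is a well-defined point, so the equation $M \cdot \phi_\Sigma(x) = 0$ makes sense. Since $x \in {\rm Sing}({\cal A}_\Sigma)$, the defining equations \eqref{eq:Sing} of the singular locus give $\partial_{x_\rho} {\rm Adj}_\Sigma(x) = 0$ for every $\rho \in \Sigma(1)$. Substituting into the identity yields $(M \cdot \phi_\Sigma(x))_\rho = x_\rho \cdot 0 = 0$ for every $\rho$, so $M \cdot \phi_\Sigma(x) = 0$ as claimed.

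There is essentially no obstacle here: the only subtle point is to ensure that the assumption $\Sigma = \overline{\Sigma}$ is genuinely used so that the coordinates of $\phi_\Sigma$ are exactly $\{x^{\hat\sigma} : \sigma \in \Sigma(d)\}$ (otherwise the sum in the partial derivative and the matrix-vector product would range over different index sets), and that $x \notin Z(\Sigma)$ is used only to make sense of $\phi_\Sigma(x)$ as a projective point. The polynomial identity itself is an Euler-type observation for squarefree monomials, and no further geometry is needed.
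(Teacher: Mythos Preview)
Your proof is correct and is exactly the approach the paper takes: the paper's entire proof is the single observation that the entries of $M\cdot\phi_\Sigma(x)$ are $x_\rho\cdot\partial_{x_\rho}{\rm Adj}_\Sigma$, which is precisely the identity you establish and then apply. Your write-up simply unpacks this one-line argument in more detail.
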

\begin{proof}
    By construction, the entries of the vector $M \cdot \phi_\Sigma(x)$ are $x_\rho \cdot \frac{\partial {\rm Adj}_\Sigma}{\partial x_\rho}$ for $\rho \in \Sigma(1)$. 
\end{proof}
Lemma \ref{lem:sing} leads to a sufficient criterion for checking that the singular locus of ${\cal A}_\Sigma$ is contained in $Z(\Sigma)$. If this holds, then the singular locus is completely described (set-theoretically) by the equations \eqref{eq:singprimitive}-\eqref{eq:singprimitivepolytope}. More precisely, for each primitive collection $J \in \Sigma^c$, let $I_J$ be the ideal generated by the $2 \cdot |J|$ polynomials in \eqref{eq:singprimitive}. If ${\rm Sing}({\cal A}_\Sigma) \subseteq Z(\Sigma)$, then Proposition \ref{prop:singirrel} implies that ${\rm Sing}({\cal A}_\Sigma) =\bigcup_J V(I_J)$. The structure of the generators of $I_J$ makes the irreducible decomposition of ${\rm Sing}({\cal A}_\Sigma) \cap Z(\Sigma)$ almost entirely combinatorial. 
\begin{corollary} \label{cor:checkIsatB}
    If $\Sigma = \overline{\Sigma}$ and $\{y \in Y_\Sigma \, : \, M \cdot y = 0\} = \emptyset$, then we have ${\rm Sing}({\cal A}_\Sigma) \subseteq Z(\Sigma)$.
\end{corollary}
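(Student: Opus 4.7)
The plan is to prove the contrapositive: assume ${\rm Sing}({\cal A}_\Sigma) \not\subseteq Z(\Sigma)$ and produce a point in $\{y \in Y_\Sigma : M \cdot y = 0\}$, contradicting the hypothesis that this set is empty. This is essentially an immediate consequence of Lemma \ref{lem:sing}; the work is just in checking that the image point $\phi_\Sigma(x)$ legitimately lies in $Y_\Sigma$.

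Concretely, I would pick $x \in {\rm Sing}({\cal A}_\Sigma) \setminus Z(\Sigma)$. Because $x \notin Z(\Sigma) = V(B(\Sigma))$, at least one of the monomials $x^{\hat\sigma}$ with $\sigma \in \Sigma(d)$ is nonzero (using $\Sigma = \overline{\Sigma}$, so that the $x^{\hat\sigma}$ for $\sigma \in \Sigma(d)$ minimally generate $B(\Sigma)$), and hence $\phi_\Sigma(x) \in \mathbb{P}^{|\Sigma(d)|-1}$ is a well-defined point. By definition $Y_\Sigma$ is the closure of the image of $\phi_\Sigma$, so in particular $\phi_\Sigma(x) \in Y_\Sigma$. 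Lemma \ref{lem:sing} then gives $M \cdot \phi_\Sigma(x) = 0$, so $\phi_\Sigma(x)$ is a point of $\{y \in Y_\Sigma : M \cdot y = 0\}$. This contradicts the standing hypothesis that this set is empty, completing the proof.

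There is no real obstacle here beyond verifying these definitional points; the content of the corollary lies in the preceding lemma.
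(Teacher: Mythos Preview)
Your argument is correct and matches the paper's intent: the corollary is stated there without proof because it is immediate from Lemma~\ref{lem:sing}, exactly via the contrapositive you spell out. The only details to check are that $\phi_\Sigma(x)$ is well-defined for $x\notin Z(\Sigma)$ and lands in $Y_\Sigma$, both of which you handle correctly.
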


\begin{example}
    For the fan $\Sigma$ from Example \ref{ex:cuboid}, the variety $Y_\Sigma$ is a 5-fold in $\mathbb{P}^7$ given by $y_{145}y_{235} - y_{245}y_{135} \, = \, y_{236}y_{145} - y_{246}y_{135} \, = \, y_{236}y_{245} - y_{246}y_{235} \, = \, 0$.
    Here $y_\sigma$ corresponds to the coordinate $x^{\hat{\sigma}}$ of $\phi_\Sigma$. The equations $M \cdot y = 0$ define a line in $\mathbb{P}^7$, which is not incident to $Y_\Sigma$. By Corollary \ref{cor:checkIsatB}, we have ${\rm Sing}({\cal A}_\Sigma) \subseteq Z(\Sigma)$. This is verified in the code \cite{mathrepo}. The singular locus is a union of six curves, four are lines and two have degree two. The degree-two components are explained by the primitive collections $\{ \rho_1,\rho_2\}, \{\rho_5,\rho_6\}$ via Proposition~\ref{prop:singirrel}.
\end{example}

\begin{theorem} \label{thm:singngon}
    Let $\Sigma$ be the normal fan of a convex $n$-gon with ray matrix $U$. Let $u_{ij} = | \det U_{ij} |$ and assume that the $n$ maximal cones of $\sigma$ are indexed by rows $12, 23, 34, \ldots, 1n$. If $n$ is not a multiple of $4$ or $u_{12}u_{34}\cdots u_{n-1,n} \neq u_{1n}u_{23}\cdots u_{n-2,n-1}$, then we have
    \begin{enumerate}
        \item ${\rm Sing}({\cal A}_\Sigma)$ equals the union of all solution sets to \eqref{eq:singprimitive}-\eqref{eq:singprimitivepolytope}, where $J$ runs over all $\frac{n(n-3)}{2}$ primitive collections of $\Sigma(1)$, 
        \item ${\cal A}_\Sigma$ is irreducible and $\dim {\rm Sing}({\cal A}_\Sigma) \leq n-4$.
    \end{enumerate}
\end{theorem}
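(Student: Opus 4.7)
The plan is to deduce claim~1 from Corollary~\ref{cor:checkIsatB} together with Proposition~\ref{prop:singirrel}, and then derive claim~2 by combining the dimension estimate from \eqref{eq:singprimitivepolytope} with a standard reducibility-versus-singularity comparison. The technical heart is verifying the hypothesis of Corollary~\ref{cor:checkIsatB}, namely that $\{ y \in Y_\Sigma \, : \, M \cdot y = 0 \} = \emptyset$ under the assumptions on $n$ and on the products of the $u_{ij}$.

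For the linear system, the first step is to subtract consecutive rows of $M$ to obtain the recursion $u_{i-1,i}\, y_{i-1,i} = u_{i+1,i+2}\, y_{i+1,i+2}$ for each $i$. Iterating in steps of two around the $n$-cycle splits according to the parity of $n$. For $n$ odd, all values $u_{i,i+1}y_{i,i+1}$ coincide with a common scalar $A$; substituting into any original row gives $(n-2)A = 0$, forcing the trivial solution. For $n = 2m$ even, the odd-indexed and even-indexed cones yield two scalars $u_{2k-1,2k}\,y_{2k-1,2k} = A$ and $u_{2k,2k+1}\,y_{2k,2k+1} = B$; each original row now reads $(m-1)(A+B) = 0$, forcing $B = -A$ as soon as $m \geq 2$.

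To intersect with $Y_\Sigma$, I would identify the toric ideal using the exponent vectors $\sigma_{i,i+1} \mapsto \mathbf{1} - e_i - e_{i+1}$. A direct linear-algebra check shows that the lattice of integer relations among these vectors has rank $0$ for $n$ odd and rank $1$ for $n = 2m$ even, with generator corresponding to the binomial $\prod_{k=1}^{m} y_{2k-1,2k} = \prod_{k=1}^{m} y_{2k,2k+1}$ (both sides pull back to $(x_1 \cdots x_n)^{m-1}$ under $\phi_\Sigma$). Inserting the solution form found above yields $A^m \beta = (-A)^m \alpha$, where $\alpha = u_{12}u_{34}\cdots u_{n-1,n}$ and $\beta = u_{1n}u_{23}\cdots u_{n-2,n-1}$. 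Since $A = 0$ gives no projective point and all $u_{ij}>0$, this equation is solvable only when $m$ is even and $\alpha = \beta$, which is exactly the excluded case. Corollary~\ref{cor:checkIsatB} then yields ${\rm Sing}({\cal A}_\Sigma) \subseteq Z(\Sigma)$, and Proposition~\ref{prop:singirrel}, applied to the $\frac{n(n-3)}{2}$ primitive collections of $\Sigma(1)$ — which for an $n$-gon are precisely the pairs of non-adjacent rays — gives claim~1.

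For claim~2, in each intersection ${\rm Sing}({\cal A}_\Sigma) \cap \Lambda_J$ one of the equations in \eqref{eq:singprimitivepolytope} is the product of a squarefree monomial with the nonzero edge-adjoint ${\rm Adj}_{\Delta_Q,U_{\Delta_Q}}$, a linear form whose variables are disjoint from $J$. Hence this equation does not vanish identically on $\Lambda_J \cong \mathbb{P}^{n-3}$, so the intersection is a proper subvariety of $\Lambda_J$ and $\dim {\rm Sing}({\cal A}_\Sigma) \leq n-4$. Irreducibility of ${\cal A}_\Sigma$ now follows from a dimension comparison: any nontrivial factorization of ${\rm Adj}_\Sigma$ — whether into distinct factors $fg$ or with a repeated factor — would force ${\rm Sing}({\cal A}_\Sigma)$ to contain $V(f) \cap V(g)$ of dimension at least $n-3$ (or all of $V(f)$), contradicting the bound. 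The main obstacle is the first technical step: carefully tracking the sign $(-1)^m$ in $A^m \beta = (-A)^m \alpha$ and combining it with positivity of the $u_{ij}$ to produce exactly the dichotomy ``$n \not\equiv 0 \pmod 4$, or $\alpha \neq \beta$'' appearing in the hypothesis.
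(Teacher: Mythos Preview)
Your proposal is correct and follows the same overall architecture as the paper: verify the hypothesis of Corollary~\ref{cor:checkIsatB} by analysing $\ker(M)$ and its intersection with $Y_\Sigma$, then invoke Proposition~\ref{prop:singirrel} for claim~1 and a dimension/reducibility argument for claim~2. The one substantive difference is in how you compute $\ker(M)$. The paper rescales the columns of $M$ to obtain a circulant matrix $M_1$, diagonalises it with the discrete Fourier basis $v_\ell = (e^{2\pi i k\ell/n})_k$, and reads off the eigenvalues $\lambda_\ell = -1 - e^{2\pi i (n-1)\ell/n}$ to conclude that ${\rm rank}(M)$ is $n$ for $n$ odd and $n-1$ for $n$ even. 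Your row-subtraction recursion $u_{i-1,i}y_{i-1,i} = u_{i+1,i+2}y_{i+1,i+2}$ reaches the same kernel description more directly and without any spectral input, which is arguably cleaner for this purpose; the paper's eigenvalue computation gives extra structural information about $M_1$ that is not actually used. The identification of the binomial equation of $Y_\Sigma$, the sign analysis $A^m\beta = (-A)^m\alpha$ leading to the dichotomy $n\not\equiv 0\pmod 4$ or $\alpha\neq\beta$, and the final irreducibility argument all match the paper.
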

\begin{proof}
    By Proposition \ref{prop:singirrel}, the first statement will follow from the inclusion ${\rm Sing}({\cal A}_\Sigma) \subseteq Z(\Sigma)$. 
    Notice that we can indeed always order the rays of $\Sigma(1)$ so that the maximal cones are indexed by $12, 23, \ldots, 1n$. The coefficients $u_{i,i+1}$ of the adjoint are strictly positive. 
    
    We claim that the matrix $M$ constructed above has rank $n$ if $n$ is odd, and rank $n-1$ when $n$ is even. These matrices are shown in \eqref{eq:MpentMhex} for $n = 5$ and $n = 6$. To show this claim, note that $M$ has the same rank as the matrix $M_1$ in which we replace $u_{i,i+1}$ by $1$ for all $i$. For $\ell = 0, \ldots, n-1$, let $v_\ell \in \mathbb{C}^n$ be the column vector $({\rm exp}(\sqrt{-1} \frac{2 \pi}{n} k \ell))_{k = 0, \ldots, n-1}$. One checks~that 
    \[ M_1 \cdot v_\ell \, = \, \Big ( \sum_{k = 1}^{n-2} e^{\sqrt{-1} \frac{2\pi}{n}k \ell} \Big ) \cdot v_\ell. \]
    This identifies the eigenvalues and eigenvectors of $M_1$. In particular, $M_1$ has orthogonal eigenvectors (it is a normal matrix). The eigenvalues $\lambda_\ell, \, \ell = 0, \ldots, n-1$ are 
    \[ \lambda_\ell \, = \, \sum_{k = 1}^{n-2} e^{\sqrt{-1} \frac{2\pi}{n}k \ell}  \, = \, \begin{cases}
        n-2 & \ell = 0 \\ 
        -1 - e^{\sqrt{-1}\frac{2\pi(n-1)}{n} \ell} & \ell = 1, \ldots, n-1
    \end{cases}.\]
    If $n$ is odd, then all eigenvalues are nonzero. If $n$ is even, then $\lambda_{n/2} = 0$, and all other eigenvalues are nonzero. This proves our claim about ${\rm rank}(M) = {\rm rank}(M_1)$. 

    If $n$ is odd and $x \in {\rm Sing}({\cal A}_\Sigma) \setminus Z(\Sigma)$ then $\phi_\Sigma(x)$ is a non-trivial kernel vector of $M$ by Lemma \ref{lem:sing}. But this contradicts ${\rm rank}(M) = n$, so part 1 of the theorem is proved for odd $n$.

    If $n$ is even, one checks that the one-dimensional kernel of $M$ is spanned by the vector
    \begin{equation} \label{eq:yproof} y \, = \, \begin{pmatrix}
        u_{12}^{-1} & - u_{23}^{-1} & u_{34}^{-1} & - u_{45}^{-1}& \cdots & - u_{1n}^{-1}
    \end{pmatrix}^t.\end{equation}
    Hence, if $x \in {\rm Sing}({\cal A}_\Sigma) \setminus Z(\Sigma)$, then we must have $\phi_\Sigma(x) = (u_{12}^{-1}: -u_{23}^{-1}: \cdots: -u_{1n}^{-1})$ by Lemma \ref{lem:sing}. We must also have $\phi_\Sigma(x) \in  Y_\Sigma$. The monomial parametrization of the toric variety $Y_\Sigma$ is encoded by the columns of the matrix $M_1$. It follows from basic toric geometry, see for instance \cite[Propositions 1.1.8 and 2.1.4]{CoxLittleSchenck2011} that $Y_\Sigma$ is a hypersurface, and its binomial defining equation is given by $y_{12}y_{34} \cdots y_{n-1,n} = y_{1n}y_{23} \cdots y_{n-2,n-1}$. Plugging in \eqref{eq:yproof} gives precisely the condition $u_{12}u_{34}\cdots u_{n-1,n} = (-1)^{\frac{n}{2}} u_{1n}u_{23}\cdots u_{n-2,n-1}$. By positivity of $u_{i,i+1}$, this equality cannot hold unless $n$ is a multiple of $4$. It is a \emph{genericity} condition because, even if $n$ is a multiple of $4$, the equality only holds for special $n$-gons (see Example \ref{ex:octagon}). 

    With our ordering of the rays of $\Sigma$, the primitive collections $J$ are $\{\rho_i, \rho_j\}$ for $1 \leq i < j-1 \leq n-1$ and $\{i,j\} \neq \{1,n\}$. There are indeed $\frac{n(n-3)}{2}$ of them. Each component $\Lambda_J \subseteq \mathbb{P}^{n-1}$ is of dimension $n-3$, and the equations \eqref{eq:singprimitivepolytope} define a strict subvariety of $\Lambda_J$. Hence we have proved that $\dim {\rm Sing}({\cal A}_\Sigma) \leq n-4$. If ${\cal A}_\Sigma$ were reducible, then the singular locus would contain the intersection of two of its components, which has dimension $n-3$. 
\end{proof}

\begin{example} \label{ex:octagon}
    We consider an octagon whose normal fan $\Sigma$ has the following ray matrix: 
    \[ U \, = \, \begin{pmatrix}
        e_1 & e_1+e_2 & e_2 & -e_1+e_2 & -e_1 & -e_1 - e_2 & -e_2 & e_1 - \alpha \, e_2
    \end{pmatrix}^t \, \, \in \, \mathbb{R}^{8 \times 2}.\]
    Here $\alpha$ is a positive real number. The universal adjoint ${\cal A}_\Sigma$ is a hypersurface of degree 6 in~$\mathbb{P}^7$ defined by
    ${\rm Adj}_\Sigma(x) \,  = \,x_1 x_2 x_3 x_4 x_5 x_6  + \cdots  + \alpha \cdot x_2 x_3 x_4 x_5 x_6 x_7  + x_3 x_4 x_5 x_6 x_7 x_8$.
    We have $u_{12}u_{34}u_{56}u_{78} \neq u_{18}u_{23}u_{45}u_{67}$ unless $\alpha = 1$. If $\alpha = 1$, then ${\rm Sing}({\cal A}_\Sigma)$ contains the~line 
    \[ V(x_6 + x_8, \, x_5 + x_7, \, x_4 - x_8, \,  x_3 - x_7, \, x_2 + x_8, \, x_1 + x_7). \]
    This line is not contained in $Z(\Sigma)$. For $\alpha \neq 1$, the singular locus ${\rm Sing}({\cal A}_\Sigma) \subsetneq Z(\Sigma)$ has 56 components. All of them have degree one. There are 40 three-planes, and 16 four-planes.  
\end{example}
We will combine Theorem \ref{thm:singngon} with a Bertini argument to show generic smoothness of Warren's adjoint \eqref{eq:adj} for polygons. This relies on the geometric observation in Example \ref{ex:geometric}. Here is a general statement for a $d$-dimensional polytope $P_z = \{ y \in \mathbb{R}^d \, : \, U \, y + z \geq 0 \}$.

\begin{theorem} \label{thm:bertini}
    Let $\Sigma$ be the normal fan of a full-dimensional simple polytope $P \subset \mathbb{R}^d$ and let $U \in \mathbb{R}^{n \times d}$ be its ray matrix. If $\dim {\rm Sing}({\cal A}_\Sigma) < n - d - 1$, then Warren's adjoint hypersurface $\{ y \in \mathbb{C}^d \, : \, {\rm adj}_{P_z}(y) = 0 \}$ is smooth for generic $z \in {\rm int}({\rm Def}(\Sigma)) = \{ z \in \mathbb{R}^n \, : \, \Sigma_{P_z}= \Sigma \}$. 
\end{theorem}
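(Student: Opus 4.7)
The plan is to realize Warren's adjoint as a linear section of ${\cal A}_\Sigma$ and combine Bertini's theorem with a dimension count controlled by the singular-locus hypothesis. By Example~\ref{ex:geometric}, for each $z \in {\rm int}({\rm Def}(\Sigma))$ the $d$-plane $H_z \subset \mathbb{P}^{n-1}$ spanned by $\mathbb{P}({\rm im}(U))$ and $z$ satisfies $H_z \cap {\cal A}_\Sigma = \mathbb{P}({\rm im}(U)) \cup W_z$, where $W_z \subset H_z \simeq \mathbb{P}^d$ is the projective closure in $H_z$ of the complex Warren adjoint hypersurface $\{y \in \mathbb{C}^d : {\rm adj}_{P_z}(y) = 0\} \subset H_z \setminus \mathbb{P}({\rm im}(U))$. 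Smoothness of the latter is therefore equivalent to $W_z$ being smooth at every point of $W_z \setminus \mathbb{P}({\rm im}(U))$. The family $\{H_z\}$ forms a linear system of $d$-planes in $\mathbb{P}^{n-1}$ parameterized by the projective quotient $\mathbb{P}^{n-d-1} \simeq \mathbb{P}^{n-1}/\mathbb{P}({\rm im}(U))$, with base locus exactly $\mathbb{P}({\rm im}(U))$.

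I would first use the hypothesis $\dim {\rm Sing}({\cal A}_\Sigma) < n-d-1$ to control where $H_z$ can meet the singular locus. Set $S := {\rm Sing}({\cal A}_\Sigma) \setminus \mathbb{P}({\rm im}(U))$ and let $\pi : \mathbb{P}^{n-1} \dashrightarrow \mathbb{P}^{n-d-1}$ denote the projection away from $\mathbb{P}({\rm im}(U))$, which is regular on $S$. The condition $H_z \cap S \neq \emptyset$ is equivalent to $[z] \in \pi(S)$, and $\dim \pi(S) \leq \dim S < n-d-1 = \dim \mathbb{P}^{n-d-1}$, so $\pi(S)$ is contained in a proper Zariski-closed subset. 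Its preimage in $\mathbb{P}^{n-1}$ is proper algebraic, hence its complement meets the top-dimensional open cone ${\rm int}({\rm Def}(\Sigma)) \subset \mathbb{R}^n$ in a Euclidean-dense subset; for such $z$, $H_z$ meets ${\rm Sing}({\cal A}_\Sigma)$ only inside the base locus $\mathbb{P}({\rm im}(U))$.

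Finally, I would invoke Bertini's theorem for the linear system $\{H_z\}$ restricted to the smooth quasi-projective variety ${\cal A}_\Sigma \setminus {\rm Sing}({\cal A}_\Sigma)$: in characteristic zero, for generic $z$ the intersection $H_z \cap ({\cal A}_\Sigma \setminus {\rm Sing}({\cal A}_\Sigma))$ is smooth away from the base locus $\mathbb{P}({\rm im}(U))$. Intersecting this generic condition with the one from the previous paragraph yields $z$ for which every point of $W_z \setminus \mathbb{P}({\rm im}(U))$ is both a smooth point of ${\cal A}_\Sigma$ and a transverse intersection with $H_z$, so $W_z$ is smooth there, which is the claim. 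The main obstacle is to carry out the Bertini argument cleanly in the presence of the forced base locus $\mathbb{P}({\rm im}(U)) \subseteq {\cal A}_\Sigma$ from Theorem~\ref{thm:containsU}, and to confirm that the dimension bound $n-d-1$ is exactly what is required to push the remaining singular points of ${\cal A}_\Sigma$ into that base locus generically.
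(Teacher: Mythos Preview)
Your proposal is correct and follows essentially the same route as the paper: both realize Warren's adjoint as a fiber of the linear projection away from $\mathbb{P}({\rm im}(U))$, use the dimension hypothesis to ensure a generic fiber avoids ${\rm Sing}({\cal A}_\Sigma)\setminus\mathbb{P}({\rm im}(U))$, and then apply Bertini on the smooth part. The only cosmetic difference is that the paper phrases this via the morphism $\ell:{\cal A}_\Sigma\setminus\mathbb{P}({\rm im}(U))\to\mathbb{P}^{n-d-1}$ and cites Jouanolou's version of Bertini \cite[Th\'eor\`eme~6.10]{JouanolouBertini}, whereas you phrase it as a linear system with base locus; these are the same argument.
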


\begin{proof}
    A $d$-plane $H \simeq \mathbb{P}^d$ satisfying $H \supset \mathbb{P}({\rm im}(U))$ is obtained as the span of $\mathbb{P}({\rm im}(U))$ and $z$, where $z$ is viewed as a point in $\mathbb{P}^{n-1}$. Once we show that $({\cal A}_\Sigma \setminus \mathbb{P}({\rm im}(U))) \cap H$ is smooth for generic $z$, we know that this holds in particular for $z$ in a dense open subset of the deformation cone ${\rm Def}(\Sigma)$. This implies the theorem, since by Lemma \ref{lem:universaladjoint} we have 
    \[ ({\cal A}_\Sigma \setminus \mathbb{P}({\rm im}(U))) \cap H \, \simeq \,  \{ y \in \mathbb{C}^d \, : \, {\rm adj}_{P_z}(y) = 0\} \quad \text{ for } z \in {\rm int}({\rm Def}(\Sigma)).\]
    Let $\ell_1(x), \ldots, \ell_{n-d}(x)$ be a basis for the linear forms vanishing on $\mathbb{P}({\rm im}(U))$. The fibers of the morphism $\ell: {\cal A}_\Sigma \setminus \mathbb{P}({\rm im}(U)) \rightarrow \mathbb{P}^{n-d-1}$ given by $\ell(x) = (\ell_1(x): \cdots : \ell_{n-d}(x))$ are precisely the intersections $({\cal A}_\Sigma \setminus \mathbb{P}({\rm im}(U))) \cap H$, where $H \supset \mathbb{P}({\rm im}(U))$. We must show that generic fibers are smooth. We will do so by applying a version of Bertini's theorem \cite{JouanolouBertini}. First of all, since $\dim {\rm Sing}({\cal A}_\Sigma) < n-d-1$ by assumption, point 1 in \cite[Theor\`eme 6.10]{JouanolouBertini} assures that generic fibers of $\ell$ do not intersect ${\rm Sing}({\cal A}_\Sigma) \setminus \mathbb{P}({\rm im}(U))$. Hence, to study generic fibers, we may restrict $\ell$ to the smooth quasi-projective variety ${\cal A}_\Sigma \setminus ({\rm Sing}({\cal A}_\Sigma) \cup \mathbb{P}({\rm im}(U)))$. Point 2 in \cite[Theor\`eme 6.10]{JouanolouBertini} says that generic fibers of this restriction, and hence of $\ell$, are~smooth. 
\end{proof}

\begin{corollary} \label{cor:genericsmoothngon}
    In the situation of Theorem \ref{thm:singngon}, if $n$ is not a multiple of $4$ or $u_{12}u_{34}\cdots u_{n-1,n} \neq u_{1n}u_{23}\cdots u_{n-2,n-1}$, then Warren's adjoint curve $\{ y \in \mathbb{C}^2 \, : \, {\rm adj}_{P_z}(y) = 0\}$ of the polygon $P_z = \{ y \in \mathbb{R}^2\, : \, U \, y + z \geq 0 \}$ is smooth for generic $z \in {\rm Def}(\Sigma)$. 
\end{corollary}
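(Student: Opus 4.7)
The plan is to combine Theorem \ref{thm:singngon} with Theorem \ref{thm:bertini}, which was proved essentially for this purpose. Concretely, I would first specialize Theorem \ref{thm:bertini} to the polygon case $d=2$: it then says that if the universal adjoint hypersurface ${\cal A}_\Sigma \subset \mathbb{P}^{n-1}$ satisfies $\dim {\rm Sing}({\cal A}_\Sigma) < n - 3$, then Warren's adjoint curve of $P_z$ is smooth for generic $z$ in the (open) deformation cone.

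Next, I would invoke Theorem \ref{thm:singngon}. Under the stated hypothesis that $n$ is not a multiple of $4$ or $u_{12}u_{34}\cdots u_{n-1,n} \neq u_{1n}u_{23}\cdots u_{n-2,n-1}$, part (2) of that theorem gives $\dim {\rm Sing}({\cal A}_\Sigma) \leq n-4$. Since $n - 4 < n - 3$, the dimension hypothesis of Theorem \ref{thm:bertini} is satisfied.

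Applying Theorem \ref{thm:bertini} then yields the smoothness of $\{ y \in \mathbb{C}^2 \, : \, {\rm adj}_{P_z}(y) = 0\}$ for generic $z \in {\rm int}({\rm Def}(\Sigma))$, which is exactly the claim. There is no real obstacle — the two main theorems have been designed to combine cleanly here; the only thing to check is the arithmetic $n-4 < n-3$ and that the ordering of rays assumed in Theorem \ref{thm:singngon} (maximal cones indexed by $12, 23, \ldots, 1n$) can always be arranged, which is immediate for a convex $n$-gon.
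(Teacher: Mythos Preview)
Your proposal is correct and matches the paper's intended argument exactly: the corollary is stated without its own proof in the paper, precisely because it follows by plugging the bound $\dim {\rm Sing}({\cal A}_\Sigma) \leq n-4$ from Theorem~\ref{thm:singngon}(2) into the hypothesis $\dim {\rm Sing}({\cal A}_\Sigma) < n-d-1 = n-3$ of Theorem~\ref{thm:bertini} with $d=2$.
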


\paragraph{Acknowledgements. }
I am grateful to Bernd Sturmfels and Bruno Gim\'enez Umbert for interesting discussions on zeros of amplitudes. Bernd Sturmfels suggested the name ``toric amplitudes'' as well as the examples in Sections \ref{subsec:pentagon}-\ref{subsec:assoc}, and he pointed me to \cite{Dolgachev}. Theorem \ref{thm:containsU} and Section \ref{sec:fano} were inspired by questions from Mario Kummer and Carolina Figueiredo respectively. Thanks to Nathan Ilten for his guidance on computing Fano schemes. Theorem \ref{thm:bertini} and Corollary \ref{cor:genericsmoothngon} were added to a previous version of this manuscript after Mario Kummer and Dmitrii Pavlov encouraged me to use Theorem \ref{thm:singngon} to show smoothness of the adjoint curve of a polygon, and Rainer Sinn pointed me to Jouanolou's book \cite{JouanolouBertini}.

\footnotesize
\bibliographystyle{abbrv}
\bibliography{references}

\noindent{\bf Author's address:}
\medskip
\noindent Simon Telen, MPI-MiS Leipzig
\hfill {\tt simon.telen@mis.mpg.de}

\end{document}